\documentclass[10pt,a4paper]{article}

\usepackage[T1]{fontenc}
\usepackage{amssymb}
\usepackage{amsthm}
\usepackage{mathtools}
\usepackage{enumitem}
\usepackage{aliascnt}
\usepackage{hyperref}
\numberwithin{equation}{section}
\newtheorem{theorem}{Theorem}[section]

\newaliascnt{proposition}{theorem}

\aliascntresetthe{proposition}

\newaliascnt{lemma}{theorem}
\newtheorem{lemma}[lemma]{Lemma}
\aliascntresetthe{lemma}

\newaliascnt{corollary}{theorem}
\newtheorem{corollary}[corollary]{Corollary}
\aliascntresetthe{corollary}

\newaliascnt{conjecture}{theorem}
\newtheorem{conjecture}[conjecture]{Conjecture}
\aliascntresetthe{conjecture}

\theoremstyle{definition}
\newaliascnt{claim}{theorem}
\newtheorem{claim}[claim]{Claim}
\aliascntresetthe{claim}

\usepackage[noabbrev,capitalize]{cleveref}
\crefname{equation}{}{}

\crefname{theorem}{Theorem}{Theorems}
\crefname{proposition}{Proposition}{Propositions}
\crefname{lemma}{Lemma}{Lemmas}
\crefname{corollary}{Corollary}{Corollaries}
\crefname{conjecture}{Conjecture}{Conjectures}
\crefname{claim}{Claim}{Claims}

\newcommand{\F}{\mathcal F}
\newcommand{\G}{\mathcal G}
\newcommand{\Y}{\mathcal Y}
\newcommand{\eps}{\varepsilon}
\newcommand{\ceil}[1]{\left\lceil #1 \right\rceil}

\begin{document}
\title{A solution to the Frankl--Kupavskii conjecture on the Erd\H{o}s--Kleitman matching problem}

\author{
    Cheng Chi\thanks{School of Mathematical Sciences, Shanghai Jiao Tong University, 800 Dongchuan Road, Shanghai 200240, China.
        Email: chengchi@sjtu.edu.cn.
        Supported by National Key R\&D Program of China under grant No. 2022YFA1006400 and National Natural Science Foundation of China No. 12571376.
    }
    \qquad
    Yan Wang\thanks{School of Mathematical Sciences, Shanghai Jiao Tong University, 800 Dongchuan Road, Shanghai 200240, China.
        Email: yan.w@sjtu.edu.cn.
        Supported by National Key R\&D Program of China under grant No. 2022YFA1006400, National Natural Science Foundation of China under grant No. 12571376 and SJTU-Warwick Joint Seed Fund.
    }
}

\date{}
\maketitle
\begin{abstract}
    For integers $n\ge s\ge2$, let $e(n,s)$ be the maximum size of a family
    $\F\subseteq2^{[n]}$ with no $s$ pairwise disjoint members.
    The problem of determining $e(n,s)$ is closely related to its uniform
    counterpart, the Erd\H{o}s Matching Conjecture.
    Frankl and Kupavskii conjectured an exact formula for $e((m+1)s-\ell,s)$ when $1\le \ell\le \lceil s/2\rceil$.
    We prove that for every fixed $m\ge3$ and sufficiently large $s$, the extremal families for $e((m+1)s-\ell,s)$ are
    \[
        P(m,s,\ell;L)\coloneqq\{A\subseteq [n]\colon |A|+|A\cap L|\ge m+1\} \text{ for some $L$ with } |L|=\ell-1
    \]
    when $1\le \ell\le (\frac{m+1}{2m+1}-o(1))s$.
    In particular, this confirms the Frankl--Kupavskii conjecture for every
    fixed $m\ge3$ and all sufficiently large $s$.
    For $m=3$ and sufficiently large $s$, we determine the exact range of $\ell$ for which $P(3,s,\ell;L)$ is extremal, generalizing a theorem of Kupavskii and Sokolov.
\end{abstract}

\section{Introduction}\label{sec:intro}

Let $[n]\coloneqq \{1,2,\ldots,n\}$.
A \emph{family} is a collection of sets.
For a set $X$ and an integer $k$, we write $\binom Xk$ for the family of
all $k$-element subsets of $X$, and we write
$\binom X{\ge k}\coloneqq \bigcup_{i\ge k}\binom Xi$.
Given an integer $n$, we also write $\binom{n}{\ge k}$ for $\sum_{i=k}^{n}\binom ni$.
A \emph{matching} in a family is a collection of pairwise disjoint sets.
An $s$-\emph{matching} is a matching of size $s$.
Given a family $\F$, its \emph{matching number} $\nu(\F)$ is the size of a
largest matching contained in $\F$.

Extremal problems concerning matching numbers in set systems date back to the classical work of Erd\H{o}s in the 1960s.
One of the representative results is the classical Erd\H{o}s--Ko--Rado theorem \cite{ErdosKoRado}.
Erd\H{o}s \cite{ErdosEMC} proposed the following problem: determining the largest possible size of a $k$-uniform family that contains no given number of
pairwise disjoint members.
This problem is now known as the Erd\H{o}s Matching Conjecture.

For integers $N,k,t$ with $N\ge kt$, write
\[
    e_k(N,t)
    \coloneqq
    \max\left\{|\G|:\G\subseteq\binom{[N]}k
    \text{ and } \nu(\G)<t\right\}.
\]
The Erd\H{o}s Matching Conjecture \cite{ErdosEMC} asserts that
\[
    e_k(N,t)
    =
    \max\left\{
    \binom Nk-\binom{N-t+1}k,\,
    \binom{kt-1}k
    \right\}.
\]
Erd\H{o}s also confirmed the conjecture holds for sufficiently large $N$ when $k$ and $t$ are fixed \cite{ErdosEMC}.
The conjecture is known for $k=2$ as the Erd\H{o}s--Gallai theorem
\cite{ErdosGallai}.
For $k=3$, it was proved for sufficiently large $N$ by \L{}uczak and Mieczkowska \cite{LuczakMieczkowska} and then fully solved by Frankl \cite{FranklDAM}.
Despite this substantial progress, the full conjecture remains open for every $k\ge4$.

In this paper, we study the non-uniform analogue of the Erd\H{o}s Matching Conjecture.
For integers $n\ge s\ge2$, define
\[
    e(n,s)
    \coloneqq
    \max\{|\F|:\F\subseteq2^{[n]}\text{ and }\nu(\F)<s\}.
\]
The study of this problem, now called the Erd\H{o}s--Kleitman problem, goes back to a question of Erd\H{o}s answered by Kleitman, who determined $e(sm-1,s)$ and $e(sm,s)$ for all $m,s\ge1$ \cite{Kleitman}.
In contrast with these two residue classes, the other values of $e(n,s)$ remain unknown and may depend heavily on $n\pmod s$.

The following construction explains the connection between the Erd\H{o}s--Kleitman problem and its uniform analogue.
Write $n=(m+1)s-\ell$.
Suppose that $\G\subseteq\binom{[n]}m$ contains no $\ell$ pairwise disjoint
members.
Then the lifted family $\G\cup\binom{[n]}{\ge m+1}$ contains no $s$ pairwise disjoint members.
Indeed, if an $s$-matching in this lifted family uses $q$ sets from $\G$, then its total number of vertices is at least $qm+(s-q)(m+1)=n+\ell-q$.
Since the ground set has size $n$, this forces $q\ge\ell$, which would give an $\ell$-matching in $\G$.
Thus, when $n=(m+1)s-\ell$, an $m$-uniform family with no $\ell$-matching can be lifted to a non-uniform family with no $s$-matching by adding all sets of size at least $m+1$.
However, this lifting only accounts for the $m$-th layer together with all higher layers.
The construction below contains lower-layer sets, which makes the Erd\H{o}s-Kleitman problem more difficult.
For each $L\in\binom{[n]}{\ell-1}$, define
\[
    P(m,s,\ell;L)
    \coloneqq
    \{A\subseteq[n]: |A|+|A\cap L|\ge m+1\}.
\]
The size of this family is independent of the choice of $L$ and we denote this
size by $|P(m,s,\ell)|$.

The family $P(m,s,\ell;L)$ has no $s$ pairwise disjoint members.
Indeed, if $A_1,\ldots,A_s$ were pairwise disjoint members of $P(m,s,\ell;L)$, then $\sum_{i=1}^s |A_i|\le n=(m+1)s-\ell$ and $\sum_{i=1}^s |A_i\cap L|\le |L|=\ell-1$.
Adding the two inequalities gives $\sum_{i=1}^s\bigl(|A_i|+|A_i\cap L|\bigr)\le (m+1)s-1$, contradicting the condition that
$|A_i|+|A_i\cap L|\ge m+1$ for every $i$.

Thus $P(m,s,\ell;L)$ gives a natural lower bound for $e(n,s)$.
Frankl and Kupavskii conjectured that, in the following range, this lower bound is always sharp.

\begin{conjecture}[Frankl and Kupavskii \cite{FKnonuniform}]\label[conjecture]{conj}
    Suppose that $s\ge2$, $m\ge1$, and
    $n=(m+1)s-\ell$ for some integer $\ell$ with
    $1\le \ell\le\ceil{s/2}$.
    Then
    \(e(n,s)=|P(m,s,\ell)|.\)
\end{conjecture}

They \cite{FKnonuniform} also confirmed the conjecture for several $(s,\ell)$.

\begin{theorem}[Frankl and Kupavskii \cite{FKnonuniform}]
    \label[theorem]{thm:FK-small-ell}
    $e((m+1)s-\ell,s)=|P(m,s,\ell)|$ holds in each of the following cases:
    \begin{enumerate}
        \item $\ell=2$ and $s\ge5$, and also $\ell=2$, $s=4$ when $m$ is even;
        \item $m=1$;
        \item $s\ge \ell m+3\ell+3$.
    \end{enumerate}
\end{theorem}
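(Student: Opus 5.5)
This is a result quoted from Frankl and Kupavskii, so I would reconstruct it in their framework, case by case, after the same two reductions. Let $\F\subseteq2^{[n]}$ with $\nu(\F)<s$ and $n=(m+1)s-\ell$.

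\emph{Reductions.} First, since $\F$ is maximal, we may assume it is an up-set. Second, after repeated $(i,j)$-shifts, we may assume it is shifted. Neither operation increases $\nu$.

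\emph{Layer decomposition.} Write $\F_k=\F\cap\binom{[n]}{k}$. The goal is to compare $\F$ with $P(m,s,\ell;L)$ for $L=[\ell-1]$ layer by layer. The sets of size $\le m$ in $P$ are exactly those $A$ with $|A\cap L|\ge m+1-|A|$. So the natural plan is to bound $\sum_{k\le m}|\F_k|$ by an averaging argument. Fix a random ordering/partition of $[n]$ into $s$ blocks with prescribed sizes summing to $n$. Since $\F$ has no $s$ disjoint members, at least one block misses $\F$ in each configuration. Compare this with the complement counts: $P$ misses all small sets meeting $L$ in too few points. This is Kleitman's averaging method, applied to cyclic or random partitions into $s-\ell$ blocks of size $m+1$ and $\ell$ blocks of size $m$.

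\emph{Cases.}

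\textbf{Case $m=1$.} Here $n=2s-\ell$, and $P$ consists of all sets of size $\ge2$ together with the singletons inside $L$ (plus $\emptyset$ excluded). A direct argument applies. If $\F$ contains $t$ singletons, a shifted up-set with $t\ge\ell$ singletons gives an $s$-matching: take those singletons and pair up the rest. So $t\le\ell-1$. Since $|P|$ already contains every set of size $\ge2$, this gives $|\F|\le|P|$.

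\textbf{Case $\ell=2$.} Here $|L|=1$. One shows that $\F_m$ has no $2$-matching in the lifted sense. By the Hilton--Milner/EKR-type argument, $\F_m$ is contained in a star, and lower layers are contained in sets through the star center with the corresponding size condition. Checking small $s$ (namely $s=4$ with $m$ even) is a parity-sensitive computation.

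\textbf{Case $s\ge\ell m+3\ell+3$.} This is the main case. I would split $\F$ into the sets meeting $L'=[\ell-1]$ and those avoiding it. Consider the link families $\F(\bar L')$ restricted to $[n]\setminus L'$, which has $(m+1)(s-\ell+1)+\dots$ elements. Use induction on $\ell$ via the $\ell-1\to\ell$ step, removing one vertex of high degree. Then apply the Frankl "degree" version of the Erd\H{o}s matching bound to show that the $m$-th layer has at most $\binom nm-\binom{n-\ell+1}m$ sets (the EMC in the regime $N\ge(2t+1)k$, which holds when $s$ is large relative to $\ell m$). Lower layers are controlled by the same cover.

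\emph{Main obstacle.} The hard step is controlling the interaction between layers below $m$ and the $m$-th layer. Lower-layer sets destroy the simple lifting argument. I would handle this by a weighted count: each $k$-set with $k<m$ must meet the cover $L$ in $\ge m+1-k$ points, or else it can be combined with large sets to form an $s$-matching. The linear bound $s\ge \ell m+3\ell+3$ is exactly what greedy completion of a matching from the large layers requires.
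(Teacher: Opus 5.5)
The paper does not prove this theorem; it is quoted from Frankl and Kupavskii. So your proposal has to stand on its own, and it has a genuine gap that recurs in all three cases. You treat a surplus in the low layers as producing an $s$-matching outright, but that is only valid if the higher layers of $\F$ are complete.

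In the case $m=1$ you conclude $t\le\ell-1$ from ``take those singletons and pair up the rest'', yet nothing puts those pairs in $\F$. In fact $\F=\{A\subseteq[n]:A\cap[s-1]\ne\emptyset\}$ is a shifted up-set with $\nu(\F)\le s-1$. It contains $s-1$ singletons, which is at least $\ell$ whenever $\ell<s$, so the inequality $t\le\ell-1$ is simply false. What is true is an exchange. Suppose $\F$ contains $t=\ell+d$ singletons. The other $n-t=2(s-t)+d$ vertices carry no $(s-t)$-matching of $\F$. Hence a maximum matching of the pairs of $\F$ there leaves at least $d+2$ vertices, all of whose $\binom{d+2}{2}\ge d+1$ pairs are missing. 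These missing members of $P$ pay for the $d+1$ surplus singletons; this is the second bound of \cref{lem:blocker} with $q=2$.

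The same flaw undermines the other two cases. For $\ell=2$, two disjoint $m$-sets of $\F$ leave exactly $(m+1)(s-2)$ vertices. $\F$ need only miss enough $(m+1)$-sets there to destroy every perfect matching: at least $\frac1{s-2}\binom{(m+1)(s-2)}{m+1}$ of them by \cref{lem:blocker}. So ``$\F_m$ has no $2$-matching'' is not forced. Nor is the absence of sets of size below $m$, even though $P(m,s,2;L)$ contains none. The whole content of this case, including the exceptional behaviour at $s=4$, is the exchange computation you omit.

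In the third case, bounding $|\F_m|$ by $\binom nm-\binom{n-\ell+1}m$ via the Erd\H{o}s Matching Conjecture presupposes $\nu(\F_m)\le\ell-1$. This again is not forced; compare Case III in the paper's proof of \cref{thm:main}, where that excess is paid for by missing $(m+1)$-sets. Even when $\nu(\F_m)\le\ell-1$, an $m$-layer bound says nothing about sets of size below $m$. Those must be shown compatible with the \emph{same} $(\ell-1)$-set covering $\F_m$. The situation $\tau(\F_m)>\ell-1$ also needs a separate Hilton--Milner-type loss that beats the lower layers (Cases I and II there).

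Your assertion that each $k$-set of $\F$ with $k<m$ either meets the cover in $m+1-k$ points or completes to an $s$-matching is false pointwise, because the completing sets may be absent from $\F$. It holds only in an amortised form, such as the bad/good dichotomy of \cref{cl:counting-from-missing} and \cref{cl:good-forces-high-missing}.

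Finally, the induction on $\ell$, the ``vertex of high degree'', and the derivation of $s\ge\ell m+3\ell+3$ from ``greedy completion'' are never specified, so this case is a plan rather than a proof. The Kleitman averaging you set up at the start is never actually used in any case.
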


The case $m=2$ was completely solved recently by Kupavskii and Sokolov
\cite{KupavskiiSokolov2025}.
Thus $m=3$ is the first unresolved value in the conjectured range.
Very recently, Kupavskii and Sokolov also determined the extremal families for $e(sm+c,s)$ when $c$ is sufficiently small compared to $s$ \cite{KupavskiiSokolovOtherEnd}.
Moreover, they show that \cref{conj} holds for every $m$, every $\ell$ with $1\le \ell\le (1/2-o(1))s$ and sufficiently large $s$ \cite{KupavskiiSokolovMore}.
Therefore, \cref{conj} holds asymptotically, provided that $s$ is sufficiently large.

Our main contribution is to prove that, for every fixed $m\ge3$ and all
sufficiently large $s$, the family $P(m,s,\ell;L)$ is the
unique extremal family when $1\le \ell\le (\frac{m+1}{2m+1}-o(1))s$.
This range properly contains the conjectured interval
$1\le\ell\le\ceil{s/2}$ and extends beyond it.
Thus we give an affirmative answer to \cref{conj} due to Frankl-Kupavskii in a strong sense.

\begin{theorem}\label{thm:main}
    Fix $m\ge3$ and $\eps>0$.  There is an integer
    $s_0=s_0(m,\eps)$ such that the following holds for all $s\ge s_0$.
    Suppose
    \[
        1\le \ell\le \left(\frac{m+1}{2m+1}-\eps\right)s,
        \qquad
        n=(m+1)s-\ell.
    \]
    If $\F\subseteq2^{[n]}$ satisfies $\nu(\F)<s$, then
    \(|\F|\le |P(m,s,\ell)|.\)
    Moreover, equality holds if and only if $\F=P(m,s,\ell;L)$
    for some $L\in\binom{[n]}{\ell-1}$.
\end{theorem}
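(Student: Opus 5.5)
We plan a stability argument: shifting, layer-by-layer counting against $P(m,s,\ell;L)$, and a final exact step. First, apply left-compressions (shifting) and upward closure. Both preserve $\nu(\F)<s$ and $|\F|$, so we may assume $\F$ is an up-set that is shifted. For the uniqueness statement we have to track equality cases through the shifts. The standard way is to show that an extremal family must be a down-closed complement of a ``shifted'' structure, and then undo the shifts; we postpone this.

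Write $\F_k=\F\cap\binom{[n]}{k}$. The key quantities are the lower layers $\F_k$ with $k\le m$. Since $n=(m+1)s-\ell$ and $\ell\le(\frac{m+1}{2m+1}-\eps)s$, compare with $P$. In $P$ we have:
\begin{itemize}
\item $\F_m$ is the sets meeting $L$;
\item $\F_{m-j}$ is the sets with at least $j+1$ elements in $L$.
\end{itemize}
Because $s$ is large compared to $m$, the size of $P$ is dominated by $\binom{n}{\ge m+1}+\binom{n}{m}-\binom{n-\ell+1}{m}$. The lower layers contribute only $O(\ell^2 n^{m-2})$, so the main competition is in layer $m$ together with the deficit in layers $\ge m+1$.

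\textbf{Step 1 (trade-off inequality).} Using the Kleitman/Frankl averaging over random partitions of $[n]$ into $s$ blocks of prescribed sizes, namely $s-\ell$ blocks of size $m+1$ and $\ell$ blocks of size $m$ (or a finer mix including smaller blocks), we derive a weighted inequality of the form
\[
\sum_k w_k\,\frac{|\F_k|}{\binom nk}\le s-1 .
\]
Each such partition has at most $s-1$ blocks in $\F$. This gives $|\F|\le|P|+o$, and it shows that in a near-extremal family the missing $(m+1)$-sets are few, forcing $\F_{\ge m+1}$ to be almost complete.

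\textbf{Step 2 (reduction to a uniform problem).} Given this, $\F_m$ essentially cannot contain $\ell$ pairwise disjoint sets that extend to an $s$-matching together with $(m+1)$-sets. Since $\F_{\ge m+1}$ is almost complete and $n$ is large, any $\ell$-matching in $\F_m$ extends greedily. So $\nu(\F_m)<\ell$ up to small corrections, and sets in lower layers $\F_{m-j}$ further restrict $\F_m$.

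We then invoke the known Erd\H{o}s Matching Conjecture in the range $N\ge (2k+o)t$, via Frankl's bound $e_k(N,t)=\binom Nk-\binom{N-t+1}k$ for $N\ge(2t-1)k-t+1$. The condition $\ell\le\frac{m+1}{2m+1}s$ is exactly the threshold at which $n=(m+1)s-\ell$ is at least about $(2m+1)\ell\cdot\frac{m+1}{m+1}$. More precisely, $n\ge (2m+1)\ell-\ell+\ldots$, which we will check matches Frankl's range $n\ge(2\ell-1)m-\ell+1$ up to the $\eps$ slack. Hence $|\F_m|\le\binom nm-\binom{n-\ell+1}m$, with stability: near-equality forces $\F_m$ to be close to a star-union on some $(\ell-1)$-set $L$.

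\textbf{Step 3 (stability to exactness).} Once $\F_m$ is close to the sets meeting $L$, we compare lower layers exactly. A set $A$ with $|A|+|A\cap L|\le m$ that lies in $\F$ can be combined with disjoint sets to build an $s$-matching, using the abundant $m$-sets meeting $L$ in one element and the $(m+1)$-sets. Conversely, each missing member of $P$ must be accounted for by an injection/counting argument showing that any deviation loses more than it gains. This is a local exchange: each extra ``bad'' set $B$ forces removal of many sets in $P$, counted via the degree of $B$ in an auxiliary matching-completion structure, and that number exceeds 1 when $s$ is large.

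The main obstacle is Step 3 in the regime $\ell$ close to $\frac{m+1}{2m+1}s$. There the stability from the uniform problem is weakest, and competing constructions (e.g. $\binom{[(m)\ell+\ldots]}{m}$-type clique families combined with lower layers) nearly tie. We would handle this first by a careful extremal comparison showing the clique-type alternative is smaller exactly when $\ell<\frac{m+1}{2m+1}s$, and the $\eps$ margin supplies the needed strict inequality.
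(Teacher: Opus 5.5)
There is a genuine gap: your outline has no working mechanism for paying for a surplus in the layers of size at most $m$, and the steps meant to provide one fail.

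\textbf{Steps 1 and 2.} Averaging over partitions into $s-\ell$ blocks of size $m+1$ and $\ell$ blocks of size $m$ only gives $\ell\,|\F_m|/\binom nm\le \ell-1+(s-\ell)\,|\Y_{m+1}|/\binom n{m+1}$. Even with $\F_{\ge m+1}$ complete, this allows $|\F_m|$ up to $\frac{\ell-1}{\ell}\binom nm$, far above $\binom nm-\binom{n-\ell+1}m$. So it yields neither a useful bound on $|\F|$ nor the conclusion that a near-extremal family misses few $(m+1)$-sets. Indeed, the trivial surplus $\binom n{\le m}-\Lambda$ is already of order $s^m$. That is the same order as the $\binom{n-1}m$ sets of size $m+1$ through a fixed vertex, and their absence blocks the extension of every $\ell$-matching of $\F_m$ avoiding that vertex. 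Hence the claim that $\ell$-matchings of $\F_m$ ``extend greedily'', so that $\nu(\F_m)<\ell$ up to small corrections, is unfounded. You need that claim before any Erd\H{o}s Matching Conjecture bound applies to $\F_m$.

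What is missing is a quantitative exchange. Suppose pairwise disjoint $Q_1,\dots,Q_q\in\F_{\le m}$ have total deficit $\sum_i(m+1-|Q_i|)\ge\ell$, with the family chosen minimal. Then the remaining $(m+1)t+d$ vertices, $t=s-q$, carry no perfect $(m+1)$-matching of $\F$. Baranyai's theorem then forces at least $\frac1t\binom{(m+1)t+d}{m+1}\ge t^m=\Omega(s^m)$ missing $(m+1)$-sets, which beats all of $\binom n{\le m-1}$. When $\nu(\F_m)=\ell+d$ with $d$ small, the same bound beats the $O((d+1)s^{m-1})$ excess of $|\F_m|$ over $\binom nm-\binom{n-\ell+1}m$; for large $d$, the second blocker bound $\binom{d+m+1}{m+1}$ takes over.

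\textbf{Step 3 and the threshold.} The second missing ingredient is stability at precision $s^{m-1}$, since that is what the lower layers can gain; ``$\F_m$ is close to a star-union'' is too weak. Put $r=n-\ell+1$ and $\alpha=\ell/s$. You need that $\nu(\F_m)\le\ell-1<\tau(\F_m)$ costs at least $\binom{r-m}{m-1}-O(s^{m-2})\approx\frac{(m+1-2\alpha)^{m-1}}{(m-1)!}s^{m-1}$. This is the Hilton--Milner-type theorem of Frankl and Kupavskii, available only for $n\ge(2+\eta)m(\ell-1)$. You also need this loss to exceed $|\F_{m-1}|\lesssim\frac{\alpha}{2}\frac{(m+1-\alpha)^{m-2}}{(m-2)!}s^{m-1}$, which holds because $\nu(\F_{m-1})\le\lfloor(\ell-1)/2\rfloor$ once deficit-$\ell$ configurations are excluded.

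That Hilton--Milner range is the real source of $\frac{m+1}{2m+1}$. It is not the end of Frankl's EMC range, which reaches $\ell\approx\frac{m+1}{2m}s$. Nor is it where the clique construction $\binom{[m\ell-1]}m\cup\binom{[n]}{\ge m+1}$ competes: at $\alpha=\frac{m+1}{2m+1}$ the normalized $m$-layers are $(2m)^m-(2m-1)^m$ versus $m^m$, e.g.\ $91$ versus $27$ for $m=3$, where the tie occurs only near $\ell\approx0.89s$. So the comparison in your last paragraph targets the wrong obstacle.

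\textbf{The coverable case.} When $\tau(\F_m)\le\ell-1$ with cover $A$, your ``each bad set forces removal of more than one set of $P$'' gives no injection, because the forced missing $m$-sets are shared among bad sets. What works is a global count. A non-canonical $E\in\F_j$ of shortage $h=m+1-j-|E\cap A|\ge1$ either has $\ell-(m+1-j)$ pairwise disjoint members of $\F_m$ avoiding it, or it forces about $h\frac{r^{m-1}}{(m-1)!}$ missing $m$-sets beyond $\binom{[n]\setminus A}m$. In the first case the total deficit is $\ell$, so the $\Omega(s^m)$ bound above applies. In the second case the forced count exceeds the total number of non-canonical sets of shortage at most $h$, because $(m-1)(\ell-1)$ is a constant factor below $r$.

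Finally, recovering uniqueness through the shifts is left open; the paper avoids shifting altogether.
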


The general theorem is not sharp when $m=3$.
In this case, the exact three-uniform matching theorem allows the comparison to continue until a second natural construction becomes competitive.
Let $n=4s-\ell$ for some $\ell$ with $1\le \ell\le s$ and $L'\in\binom{[n]}{3\ell-1}$.  Define
\[
    P'(s,\ell;L')
    =
    \binom{L'}3\cup\binom{[n]}{\ge4}.
\]
Its size is independent of the choice of $L'$ and we denote this size by $|P'(s,\ell)|$.

Since $P(3,s,\ell;L)$ and $P'(s,\ell;L')$ agree on all layers of size at least $4$, it suffices to compare the size of third layers of $P(3,s,\ell;L)$ and $P'(s,\ell;L')$, together with the additional contribution from the second layer of $P(3,s,\ell;L)$.
Let $t(s)$ be the root of $|P(3,s,\ell)|=|P'(s,\ell)|$ with $t(s)>1$.
A direct calculation gives
\[
    t(s)
    \coloneqq
    \frac{17-18s+\sqrt{49-852s+1284s^2}}{20}
    =
    \frac{\sqrt{321}-9}{10}s+O(1)\approx 0.8916s+O(1).
\]
For $m=3$, this leads to the following sharper theorem.
\begin{theorem}\label{thm:m3-lower-range}
    There exists an integer $s_0$ such that the following statements hold for
    all $s,\ell$ with $s\ge s_0$ and $1\le \ell\le t(s)$.
    Write $n\coloneqq 4s-\ell$.
    If $\F\subseteq2^{[n]}$ satisfies $\nu(\F)<s$, then
    \(|\F|\le |P(3,s,\ell)|.\)
    Moreover, when $\ell<t(s)$, equality holds if and only if
    $\F=P(3,s,\ell;L)$ for some $L\in\binom{[n]}{\ell-1}$.
    If $t(s)$ is an integer and $\ell=t(s)$, equality holds if and only if
    either $\F=P(3,s,\ell;L)$ for some
    $L\in\binom{[n]}{\ell-1}$ or
    $\F=P'(s,\ell;L')$ for some $L'\in\binom{[n]}{3\ell-1}$.
\end{theorem}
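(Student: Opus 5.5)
We would prove \cref{thm:m3-lower-range} by a layer-by-layer analysis. The general structure should be the one underlying \cref{thm:main}, with the asymptotic uniform estimate on the third layer replaced by Frankl's exact resolution of the Erd\H{o}s Matching Conjecture for $k=3$ \cite{FranklDAM}.

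First we reduce to a shifted family. Shifting preserves $\nu(\F)<s$ and $|\F|$, and since $\F$ is extremal we may assume it is an up-set, so $\binom{[n]}{\ge4}\subseteq\F$ after a standard argument. Write $\F_i=\F\cap\binom{[n]}{i}$. The key structural claim is that sets of size at most $1$ cannot be present: a singleton lets us complete a matching greedily. We would also show that $\F_2$ and $\F_3$ interact through the same counting as in the introduction. If $\F_2$ contains a matching of size $p$ and $\F_3$ contains a matching of size $q$ disjoint from it, with $p,q$ such that the vertex count $2p+3q+4(s-p-q)\le n$, we obtain an $s$-matching. So $2p+q\le \ell-1$ must hold for every such disjoint pair.

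Next we analyse the second layer. Let $T=\bigcup\F_2$. Because the matching number of $\F_2$ is small (at most $(\ell-1)/2$), the Erd\H{o}s--Gallai theorem bounds $|\F_2|$ by $O(\ell n)$. Shiftedness then lets us find a set $L$ of vertices covering $\F_2$, i.e., all edges of $\F_2$ meet $L$. The parameter to optimise is $k=\nu(\F_2)$ or $|L|$. The intended dichotomy is as follows:
\begin{itemize}
\item if $\F_2$ has a vertex cover $L$ of size $k$, then $\F_3$ restricted to $[n]\setminus L$ has matching number less than $\ell-2k$, roughly;
\item if instead $\F_2$ has no small cover, the second-layer contribution is only $O(\ell^2)$, which is negligible against the $\Theta(s^2)$ savings below.
\end{itemize}

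In the covered case, we bound $|\F_3|$ by counting triples meeting $L$, at most $\binom{n}{3}-\binom{n-k}{3}$, plus a $3$-uniform family on $n-k$ vertices with no $(\ell-2k)$-matching. Frankl's theorem gives the maximum of the latter as
\[
\max\left\{\binom{n-k}3-\binom{n-k-\ell+2k+1}3,\ \binom{3(\ell-2k)-1}3\right\}.
\]
Adding $|\F_2|\le\binom{k}{2}+k(n-k)$ yields an explicit function $g(k)$. We then verify that for $\ell\le t(s)$ the maximum of $g$ over $0\le k\le \ell-1$ is attained at $k=\ell-1$, which gives $P(3,s,\ell;L)$, or at $k=0$ with the clique alternative, which gives $P'(s,\ell;L')$. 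The value $t(s)$ is exactly where these two agree. Uniqueness would follow from the uniqueness statements in Frankl's theorem and the strictness of the inequalities at all intermediate $k$.

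The main obstacle is the middle of this computation. One issue is showing $g$ is convex or otherwise maximised at the endpoints uniformly in $\ell\le t(s)$, with the required strictness. A second is justifying that $\F_2$ really reduces to a star-like structure covered by $k$ vertices, with the third layer outside $L$ constrained as claimed. For the second issue we would attempt a stability argument: take a maximum matching in $\F_2$ and use shiftedness to show the two endpoints behave like one covering vertex plus a negligible error of size $O(s)$. This error is absorbed since every deviation costs $\Omega(s^2)$ triples when $s$ is large.
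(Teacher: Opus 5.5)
\textbf{Gap 1: the reduction to $\binom{[n]}{\ge4}\subseteq\F$.} There is a genuine gap at the first step. Passing to an up-set (or shifting) does not give $\binom{[n]}{\ge4}\subseteq\F$, and no standard argument does. A priori a competitor may omit $4$-sets in exchange for more low-layer sets, for instance a third layer with $\nu(\F_3)\ge\ell$. Showing that this never pays is the heart of the theorem.

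Everything you do afterwards uses that all $4$-sets are present: the constraint $2p+q\le\ell-1$ and the bound $\nu(\F_2)\le(\ell-1)/2$. Your greedy exclusion of singletons is false even with all $4$-sets present: $\{\{x\}\}\cup\binom{[n]}{\ge4}$ has no $s$-matching once $\ell\ge4$. Singletons can only be excluded by counting; in the paper they are charged to missing triples disjoint from them.

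The missing idea is a mechanism by which missing high-layer sets pay for low-layer surplus. The paper gets it from Baranyai's theorem via \cref{lem:blocker}. Any pairwise disjoint low-layer family of total deficit at least $\ell$ forces $|\Y_4|\ge c_0s^3$ (\cref{lem:deficit-completion}). If $\nu(\F_3)=\ell+d$, one also gets $|\Y_4|\ge\binom{d+4}{4}$. These bounds dominate the possible excess of the first three layers. So the paper proves $|\F_1|+|\F_2|+|\F_3|\le\Lambda+|\Y_{\ge4}|$ instead of assuming $\Y_{\ge4}=\emptyset$.

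\textbf{Gap 2: the second-layer parametrization.} Even granting $\binom{[n]}{\ge4}\subseteq\F$, your parametrization does not contain the extremal configuration.

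In $P(3,s,\ell;L)$ one has $\F_2=\binom L2$, with $\nu(\F_2)=\lfloor(\ell-1)/2\rfloor$ but $\tau(\F_2)=\ell-2$. So the canonical family lies in your ``no small cover'' branch, where you discard $|\F_2|$ as negligible. But for $s/2<\ell\le t(s)$, both $\ell^2$ and the savings you compare against are of order $s^2$. Moreover, the pairs $\binom{\ell-1}{2}$ are exactly what keeps $P$ ahead of $P'$ up to $t(s)$: at $\ell=t(s)$ one has $A_3-\bigl(\binom n3-\binom r3\bigr)=\binom a2$.

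In the covered branch, your claimed maximizer $k=\ell-1$ is not admissible: there $\ell-2k<0$, while $\nu(\F_2)\le(\ell-1)/2$. At that $k$, the bound $\binom k2+k(n-k)$ overshoots the canonical pair count by $ar=\Theta(s^2)$.

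The underlying issue is that pairs inside a cover of the \emph{third} layer cost nothing, so the right structural parameter is $\tau(\F_3)$. The paper splits as follows:
\begin{itemize}
\item $\tau(\F_3)\le a$: pairs and singletons are charged to missing triples disjoint from them (\cref{cl:m3-terminal-core}).
\item $\nu(\F_3)\le a<\tau(\F_3)$: Guo--Lu--Mao (\cref{thm:m3-GLM}) is followed by exact quadratic comparisons, since all competing terms are of order $s^2$.
\item $\nu(\F_3)\ge\ell$: the blocker bound applies.
\end{itemize}

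\textbf{Uniqueness under shifting.} If you shift, uniqueness for the shifted family does not transfer back to $\F$ without further argument. The paper never shifts.
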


The two quantities $|P(3,s,\ell)|$ and
$|P'(s,\ell)|$ are equal precisely at the two roots $\ell=1$ and $\ell=t(s)$.
Moreover, $|P(3,s,\ell)|>|P'(s,\ell)|$ for $1<\ell<t(s)$, while
$|P(3,s,\ell)|<|P'(s,\ell)|$ when $s\ge\ell>t(s)$.
Therefore, for $t(s)<\ell\le s$, the construction $P'(s,\ell;L')$ is better
than $P(3,s,\ell;L)$ while still avoiding an $s$-matching, so
$P(3,s,\ell;L)$ is not extremal.
Thus \cref{thm:m3-lower-range} determines the exact range of $\ell$ for which
$P(3,s,\ell;L)$ is extremal.

The rest of the paper is organized as follows.
In \cref{sec:lems}, we collect the uniform matching input and prove the
auxiliary blocker lemma.
In \cref{sec:comparison}, we set up the layer-by-layer comparison with the
canonical family $P(m,s,\ell;L)$.
The proof of \cref{thm:main} is given in \cref{sec:proof-of-main-result-1}.
\cref{thm:m3-lower-range} is proved in \cref{sec:m3-threshold}.
Some numerical comparisons needed in the proof of \cref{thm:m3-lower-range} are presented in \cref{app:m3-numerics}.

\section{Definitions and lemmas}\label{sec:lems}

In this section, we collect some useful notation and several lemmas.
For a family $G$, let $\tau(G)$ denote its vertex-cover number, namely the minimum size of a set of vertices meeting every member of $G$; we use the convention $\tau(\emptyset)=0$.

We first record several theorems concerning the Erd\H{o}s Matching Conjecture.

\begin{lemma}[Erd\H{o}s and Gallai \cite{ErdosGallai}]
    \label[lemma]{lem:EG}
    If $G\subseteq\binom{[n]}{2}$ with $\nu(G)\le s$, then
    \[
        |G|\le \max\left\{\binom{2s+1}{2},\binom{n}{2}-\binom{n-s}{2}\right\}.
    \]
\end{lemma}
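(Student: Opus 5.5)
The plan is a direct Gallai--Edmonds style argument for graphs; the extremal bound for $\nu(G)\le s$ is the classical Erd\H{o}s--Gallai theorem. I would prove it by induction on $n$, or more transparently through the Berge--Tutte formula. Set $\nu(G)=\nu\le s$; since the right-hand side is nondecreasing in $s$ (each of the two terms grows with $s$), it suffices to prove the bound with $s$ replaced by $\nu$. By the Berge--Tutte formula there is a set $S\subseteq[n]$ with $|S|=k$ such that $G-S$ has components $C_1,\dots,C_r$, whose odd components number $q$, and $\nu=\tfrac12(n+k-q)$.

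Next I would bound $|G|$ by completing the structure. We have $|G|\le \binom k2+k(n-k)+\sum_i\binom{|C_i|}{2}$. Merging all even components into an odd one, and merging pairs of odd components, keeps the deficiency formula for $\nu$ valid and only increases the count. So we may assume the components of $G-S$ are $q$ odd cliques of sizes $c_1,\dots,c_q$ with $\sum c_i=n-k$ and $q=n+k-2\nu$. By convexity of $\binom x2$, the sum $\sum\binom{c_i}2$ under these constraints is maximized when all but one $c_i$ equal $1$. This gives the single remaining clique size $c=n-k-q+1=2\nu-2k+1$, and hence
\[
|G|\le f(k)\coloneqq\binom k2+k(n-k)+\binom{2\nu-2k+1}{2},\qquad 0\le k\le\nu .
\]

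The remaining step is to show that $f$ is convex in $k$, so that the maximum is at an endpoint. This is a routine check: the quadratic coefficient is $-\tfrac12-1+2=\tfrac12>0$. The endpoint $k=0$ gives $\binom{2\nu+1}2$, and $k=\nu$ gives $\binom\nu2+\nu(n-\nu)=\binom n2-\binom{n-\nu}2$. This proves the lemma.

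The main obstacle is the reduction step: I must check carefully that adding edges (merging components, making $S$ complete to everything) does not increase $\nu$ beyond the Tutte--Berge value. This holds because the set $S$ still witnesses the same deficiency. I must also handle the constraint $c\ge1$, which holds since $k\le\nu$.
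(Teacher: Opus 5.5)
The paper does not prove this lemma: it quotes it from Erd\H{o}s and Gallai and uses it as a black box, namely for the case $k=2$ of \cref{cor:large-ground-emc} and to bound $|\F_2|$ in Case~2 of \cref{sec:m3-threshold}. Your Tutte--Berge argument is the standard self-contained proof, and it is correct. Fixing a witness $S$ reduces the problem to maximizing $\binom k2+k(n-k)+\sum_i\binom{c_i}2$. The exchange (convexity) argument then forces the odd parts to be $(2\nu-2k+1,1,\dots,1)$. Convexity of $f$ in $k$ leaves only the two classical extremal graphs at $k=0$ and $k=\nu$. Your route buys self-containment and shows where the two extremal configurations come from. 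The paper's citation just saves space, since only the numerical bound is needed there.

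A few points need tidying.

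\begin{itemize}
\item Merging two odd components produces an even one and lowers $q$ by $2$, so that operation does \emph{not} preserve the deficiency. Fortunately you never actually use it: your reduced configuration keeps all $q$ odd parts and only absorbs even components into odd ones.
\item You do not need to track the matching number of any modified graph. Once $S$ is fixed, $|G|\le\binom k2+k(n-k)+\sum_i\binom{|C_i|}2$ holds for $G$ itself. The rest is a purely numerical optimization over partitions of $[n]\setminus S$ with exactly $q$ odd parts, so the ``main obstacle'' you name is not really an obstacle.
\item The case $q=0$ must be treated separately, since then there is no odd part to absorb into. It forces $k=0$ and $n=2\nu$, whence $|G|\le\binom n2\le\binom{2\nu+1}2$.
\item The inequality $k\le\nu$, needed for $c\ge1$, should be justified; it follows from $q\le n-k$.
\item The $k^2$-coefficient of $f$ is $\frac12-1+2=\frac32$, not $\frac12$. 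Convexity and the endpoint conclusion are unaffected.
\end{itemize}
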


\begin{theorem}[Erd\H{o}s \cite{ErdosEMC}]
    \label{thm:EM-fixed}
    Fix integers $k\ge2$ and $t\ge1$.  There exists $N_0=N_0(k,t)$ such that
    the following holds for all $N\ge N_0$.  If
    $\G\subseteq\binom{[N]}k$ satisfies $\nu(\G)<t$, then
    \(|\G|\le \binom Nk-\binom{N-t+1}k.\)
\end{theorem}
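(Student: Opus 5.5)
The statement to prove is Erd\H{o}s's theorem: for fixed $k$ and $t$ and $N$ large, a $k$-uniform family $\G$ with $\nu(\G)<t$ has $|\G|\le\binom Nk-\binom{N-t+1}k$. The plan is a stability argument driven by high-degree vertices. Let $\G\subseteq\binom{[N]}k$ with $\nu(\G)<t$, and fix a threshold $D\coloneqq k(t-1)\binom{N-2}{k-2}+1$. Define the set of \emph{heavy} vertices
\[
    H\coloneqq\{v\in[N]:\deg_\G(v)\ge D\}.
\]
The argument has two steps: first show $|H|\le t-1$, and then bound the members of $\G$ avoiding $H$.

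For the first step, suppose $H$ contains $t$ vertices $v_1,\dots,v_t$. Build a matching greedily: at step $i$, choose an edge of $\G$ through $v_i$ that avoids the vertices already used. The previously chosen edges, together with the other vertices $v_j$, occupy at most $k(t-1)+t$ vertices. The number of edges through $v_i$ that also meet a fixed other vertex is at most $\binom{N-2}{k-2}$, so at most $(kt+t)\binom{N-2}{k-2}$ edges through $v_i$ are blocked. With $D$ chosen slightly larger than this (adjust the constant to $(k+1)t\binom{N-2}{k-2}+1$), a good choice always exists. This produces a $t$-matching, which is a contradiction. Hence $|H|=h\le t-1$.

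For the second step, let $\G_0$ be the set of members of $\G$ disjoint from $H$. The key observation is that $\G_0$ has no $(t-h)$-matching: any such matching could be extended by choosing, greedily and exactly as above, one edge through each vertex of $H$ avoiding everything used so far. Each vertex of $\G_0$ has degree less than $D=O(N^{k-2})$. A family with $\nu<t-h$ has a cover (the union of a maximal matching) of size at most $k(t-h-1)$, so
\[
    |\G_0|\le k(t-1)\,D=O(N^{k-2}).
\]
Therefore
\[
    |\G|\le\binom Nk-\binom{N-h}k+O(N^{k-2}).
\]

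To finish, split on $h$. If $h\le t-2$, then $\binom Nk-\binom{N-h}k\le\binom Nk-\binom{N-t+1}k-\binom{N-t+1}{k-1}+O(1)$, and the deficit of order $N^{k-1}$ beats the $O(N^{k-2})$ error once $N\ge N_0(k,t)$. If $h=t-1$, then $\G_0$ has no $1$-matching, so $\G_0=\emptyset$. Every member of $\G$ then meets $H$, which gives exactly $|\G|\le\binom Nk-\binom{N-t+1}k$.

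The main obstacle is the greedy extension step: I need to check carefully that the blocked-edge count uses the right number of forbidden vertices (at most $kt+t$) and that the threshold $D$ is simultaneously large enough for the greedy argument and small enough, namely $O(N^{k-2})$, for the second-layer estimate. Both conditions hold because $k$ and $t$ are fixed.
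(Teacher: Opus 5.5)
Your proof is correct. The paper gives no proof of this statement: it is quoted from Erd\H{o}s and used only inside \cref{cor:large-ground-emc}, for the finitely many values $2\le t<t_0$, where only the existence of some $N_0(k,t)$ matters. So there is no route in the paper to compare against. Your argument is the classical heavy-vertex proof, and it fully covers what the paper needs. All the steps check out:
\begin{itemize}
\item With $D=(k+1)t\binom{N-2}{k-2}+1$, the greedy step never has more than $k(t-1)+t-1<(k+1)t$ forbidden vertices. This holds both when showing $|H|\le t-1$ and when extending a $(t-h)$-matching of $\G_0$ through the vertices of $H$.
\item The maximal-matching cover of $\G_0$ consists of light vertices, so $|\G_0|\le k(t-h-1)(D-1)=O_{k,t}(N^{k-2})$.
\item For $h\le t-2$ the comparison is exact: $\binom{N-h}{k}-\binom{N-t+1}{k}\ge\binom{N-t+2}{k}-\binom{N-t+1}{k}=\binom{N-t+1}{k-1}$, so the $O(1)$ you wrote is unnecessary.
\item The cases $t=1$ and $k=2$ go through unchanged: $H=\emptyset$ forces $\G=\emptyset$ in the first, and $\binom{N-2}{0}=1$ in the second.
\end{itemize}

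Your route also gives two things the paper's black-box citation does not. First, the threshold is explicit and polynomial in $k$ and $t$ (comparing $\binom{N-t+1}{k-1}$ with $k(k+1)t(t-1)\binom{N-2}{k-2}$ gives roughly $N_0=O(k^3t^2)$). Second, you get the equality case for free. The inequality is strict when $h\le t-2$, and when $h=t-1$ equality forces $\G$ to be the family of all $k$-sets meeting the $(t-1)$-set $H$.

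For the write-up, define $D$ once with the corrected constant $(k+1)t\binom{N-2}{k-2}+1$, instead of first stating $k(t-1)\binom{N-2}{k-2}+1$ and then adjusting it.
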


\begin{theorem}[Frankl and Kupavskii \cite{FK2022}]
    \label{thm:FK2022-emc}
    Fix an integer $k\ge 3$. Then there exists $s_{\mathrm{FK}}=s_{\mathrm{FK}}(k)$ such that the
    following holds for every integer $s\ge s_{\mathrm{FK}}$. If
    $\G\subseteq\binom{[n]}{k}$ with $\nu(\G)\le s$ and
    \(n\ge \frac{5}{3}ks-\frac{2}{3}s,\)
    then
    \(|\G|\le \binom{n}{k}-\binom{n-s}{k}.\)
\end{theorem}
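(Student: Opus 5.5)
Since this is a cited result, I only outline how I would reconstruct a proof; I would not reprove it in full in the paper.

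\textbf{Step 1: reduce to shifted families.} Apply the standard $(i,j)$-shifting operators. They preserve $|\G|$ and do not increase $\nu(\G)$, so I may assume $\G$ is shifted. Let $\mathcal{A}=\{G\subseteq[n]:|G|=k,\ G\cap[s]\neq\emptyset\}$ be the extremal family, of size $\binom nk-\binom{n-s}k$. Split $\G=\G_1\cup\G_0$, where $\G_1$ consists of the members meeting $[s]$ and $\G_0$ of those avoiding $[s]$. It then suffices to prove the loss inequality $|\G_0|\le|\mathcal{A}\setminus\G_1|$, i.e.\ every $k$-set inside $[s+1,n]$ kept in $\G$ is paid for by a set through $[s]$ missing from $\G$.

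\textbf{Step 2: build matchings from $\G_0$.} Shiftedness and $\nu(\G)\le s$ give structure. For example, if $\G_0\neq\emptyset$ then $\{s+1,\ldots,s+k\}\in\G$, and more generally large sub-matchings inside $\G_0$ force many missing sets in $\G_1$. To make this quantitative I would use Frankl's averaging device. Take a uniformly random ordered partition of a random $(s+1)k$-subset of $[n]$ into $s+1$ blocks $B_1,\dots,B_{s+1}$ of size $k$. Since $\nu(\G)\le s$, at least one block is missing from $\G$. Taking expectations yields an inequality of the form $\sum_i\Pr(B_i\notin\G)\ge 1$, and I would sharpen it by conditioning on how many blocks meet $[s]$.

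\textbf{Step 3: convert to the counting inequality (main obstacle).} For $n\ge(2s+1)k-s$, Frankl's argument closes directly at this point. In the range $n\ge\frac53ks-\frac23s$, which is below that threshold, it does not. Here I would add a concentration step in the spirit of Frankl--Kupavskii. For a typical random block arrangement, the number of blocks meeting $[s]$ is close to its mean, by hypergeometric concentration with $s\ge s_{\mathrm{FK}}(k)$ large. I would then derive a degree-type inequality: the average over $x\in[s]$ of the number of sets through $x$ missing from $\G$ dominates $|\G_0|/s$ times a factor exceeding $1$. The hard part is showing this factor exceeds $1$, which is exactly where $\frac53ks-\frac23s$ enters; I expect it to come from optimizing a two-level counting over blocks containing one versus two elements of $[s]$. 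I would first attempt this via a weighted version of the averaging, assigning weight depending on $|B_i\cap[s]|$. If the factor turns out to be only $1-o(1)$, I would finish with a stability argument: near-extremal shifted families have $\tau(\G)=s$ up to few exceptional sets, and there a direct Hilton--Milner-type comparison gives the strict bound.
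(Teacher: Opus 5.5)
The paper gives no proof of this statement. It imports it from \cite{FK2022} as a black box, used only through Corollary~\ref{cor:large-ground-emc}, so your decision to cite rather than reprove matches what the paper does. Read as an actual reconstruction, however, your outline has a genuine gap, and it sits exactly where the theorem's content lies.

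Step~1 is fine. Step~2 by itself gives only $(s+1)\bigl(1-|\G|/\binom nk\bigr)\ge 1$, i.e.\ the trivial bound $|\G|\le\frac{s}{s+1}\binom nk$. Everything therefore rests on Step~3, where you only state what you expect:
\begin{itemize}
\item you never formulate the weighted averaging inequality;
\item you never explain how hypergeometric concentration of the number of blocks meeting $[s]$ becomes an inequality between $|\G_0|$ and missing sets;
\item you never show why the relevant factor exceeds $1$ exactly when $n\ge\frac53ks-\frac23s$.
\end{itemize}
Moreover, the target as you phrase it would not suffice. Note that $\sum_{x\in[s]}\bigl|\{G\in\mathcal A\setminus\G_1:x\in G\}\bigr|=\sum_{G\in\mathcal A\setminus\G_1}|G\cap[s]|$ overcounts missing sets that meet $[s]$ in several points. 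So a degree bound with a factor only slightly above $1$ does not yield $|\G_0|\le|\mathcal A\setminus\G_1|$. You would have to restrict to missing sets meeting $[s]$ in exactly one point, or weight by $1/|G\cap[s]|$, and that makes the required inequality strictly harder.

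Your fallback cannot close the gap either. The regime where you say the direct argument fails is $\frac{5k-2}{3}s\le n<(2s+1)k-s$, which lies entirely below $2sk$ once $s>k$. The Hilton--Milner-type bounds available only start above $2sk$: for instance, Theorem~\ref{thm:FK-HM} requires $n\ge(2+\eta)sk$ or $n\ge(s+\max\{25,2s+2\})k$. Also, the assertion that near-extremal shifted families have $\tau(\G)=s$ up to few exceptional sets is itself an unproved stability statement in that range. So your outline names plausible ingredients but not the key lemma that makes the bound $\frac53ks-\frac23s$ work. Within this paper the statement should simply be cited, as your first sentence says.
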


\begin{theorem}[Frankl \cite{FranklDAM}]
    \label{thm:m3-EMC3}
    Let $N$ and $t$ be positive integers with $N\ge 3t-1$.
    If $\G\subseteq\binom{[N]}3$ satisfies $\nu(\G)<t$, then
    \[
        |\G|
        \le
        \max\left\{
        \binom N3-\binom{N-t+1}3,
        \binom{3t-1}3
        \right\}.
    \]
    Moreover, if
    \(\binom{3t-1}3> \binom N3-\binom{N-t+1}3\)
    and equality holds with $|\G|=\binom{3t-1}3$, then
    \(\G=\binom U3\)
    for some $U\in\binom{[N]}{3t-1}$.
\end{theorem}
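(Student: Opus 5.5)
Since this is Frankl's theorem and the paper only cites it, I would reconstruct a proof along the lines of the classical shifting approach, by induction on $N$ for fixed $t$. The plan is to assume $\G$ is shifted and then compare $|\G|$ with the larger of the two bounds.

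\textbf{Reduction to shifted families.} Left-compressions $S_{ij}$ preserve $|\G|$ and do not increase $\nu(\G)$. So for the inequality we may assume $\G$ is shifted. For the uniqueness clause we must track one compression at a time. The standard fact here is that if $S_{ij}(\G)=\binom U3$ with $|U|=3t-1$, then either $\G$ is already a clique on some $(3t-1)$-set, or $\G$ contains a $t$-matching.

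\textbf{Base case and induction on $N$.} The base case $N=3t-1$ is trivial. For $N>3t-1$, split the shifted family at the last vertex $N$:
\[
\G=\G(\bar N)\cup\{G\cup\{N\}:G\in\G(N)\},
\]
where $\G(\bar N)\subseteq\binom{[N-1]}3$ and the link is $\G(N)\subseteq\binom{[N-1]}2$.

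\begin{itemize}
\item Clearly $\nu(\G(\bar N))<t$, so induction bounds $|\G(\bar N)|$ by the maximum for $N-1$.
\item For the link, shiftedness gives the stronger property that $\G(N)$ has no $t$ pairwise disjoint pairs inside $[N-1]$ avoiding some $t$ vertices. One can replace $N$ by the unused vertices because $N\ge 3t$.
\item In particular $\nu(\G(N))<t$, and \cref{lem:EG} gives $|\G(N)|\le\max\{\binom{2t-1}2,\binom{N-1}2-\binom{N-t}2\}$.
\end{itemize}

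\textbf{Closing the recursion.} The two extremal values satisfy
\[
\binom N3-\binom{N-t+1}3=\Bigl[\binom{N-1}3-\binom{N-t}3\Bigr]+\Bigl[\binom{N-1}2-\binom{N-t}2\Bigr],
\]
so the recursion closes exactly in the ``star'' regime. It fails only when $\binom{3t-1}3$ is the relevant maximum for $N-1$, or when the link is clique-like.

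\textbf{Main obstacle: the middle range.} The hard case is roughly $3t\le N\le \tfrac43(3t)$, where both bounds compete and the naive recursion loses. Here I would run a joint bound instead of bounding $\G(\bar N)$ and $\G(N)$ separately. If the link has many edges, say more than $\binom{N-1}2-\binom{N-t}2$, then by shiftedness it contains a large clique or a large cover structure. That in turn forces $\G(\bar N)$ to miss many triples: every triple of $\G(\bar N)$ together with $t-1$ link edges plus $N$ would otherwise extend to a $t$-matching. Frankl's device for making this quantitative is an averaging over partitions of $[N]$ into $t-1$ triples and a remainder. Counting how many members of $\G$ each partition can contain yields an inequality linking $|\G(\bar N)|$ and $|\G(N)|$ that interpolates between the two extremal configurations.

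I expect this averaging step, namely choosing the right partition weights so that the bound is tight at both $\binom U3$ and the star, to be the crux. I would try first to mimic the $k=2$ Erdős–Gallai proof lifted one level, using the Erdős–Gallai structure of the link. Uniqueness then follows by tracing equality through the averaging, which forces $\G(N)=\emptyset$ and $\G=\binom U3$ when the clique bound is strictly larger.
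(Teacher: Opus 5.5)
The paper does not prove this theorem; it quotes it from Frankl. Your proposal therefore has to stand on its own, and it does not: the step you yourself call the crux is never carried out.

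After shifting and splitting at the vertex $N$, your recursion closes only when $N-1$ is already in the star regime. Consider every $N$ from $3t$ up to the crossover of the two bounds (roughly $\frac{3+\sqrt{321}}{6}\,t\approx 3.49t$ for large $t$), and also the first $N$ past it. For all of these, induction only gives $|\G(\bar N)|\le\binom{3t-1}3$. Adding any nonzero bound on $|\G(N)|$ then overshoots the target. Since your induction on $N$ must pass through this range, the star regime is not established either.

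In this range you only name a device (``averaging over partitions of $[N]$ into $t-1$ triples and a remainder''). You do not give the weights, do not derive the resulting inequality between $|\G(\bar N)|$ and $|\G(N)|$, and do not check tightness at both $\binom U3$ and the star. The uniqueness clause, ``tracing equality through the averaging'', inherits the same gap.

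The heuristic you offer for that step also targets the wrong quantity. For $N\ge 3t$ you have already shown $\nu(\G(N))<t$, so \cref{lem:EG} gives $|\G(N)|\le\binom{N-1}2-\binom{N-t}2$. Hence the case ``the link has more than $\binom{N-1}2-\binom{N-t}2$ edges'' is empty. What must be controlled is the trade-off between a near-clique $\G(\bar N)$ and a nonempty link.

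The constraint that shiftedness actually supplies is this. If $P_1,\dots,P_p$ are pairwise disjoint edges of $\G(N)$, then $P_i\cup\{y\}\in\G$ for every $y\notin P_i$. So $\G(\bar N)$, restricted to $[N-1]$ minus $P_1\cup\dots\cup P_p$ and any $p$ further vertices, has no $(t-p)$-matching. Your sentence about a triple of $\G(\bar N)$ ``together with $t-1$ link edges plus $N$'' does not express this, since only one link edge can be completed by $N$ itself. Turning the correct constraint into a bound that is tight for both extremal families is precisely the missing idea.

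Separately, your ``standard fact'' about a single compression is false for $t=2$. Take $U=\{i,1,2,3,4\}$, $j\notin U$, and $\G=\bigl(\binom U3\setminus\{\{i,1,2\},\{i,3,4\}\}\bigr)\cup\{\{j,1,2\},\{j,3,4\}\}$. Then $S_{ij}(\G)=\binom U3$ and $\nu(\G)=1$, yet $\G$ is not a clique. The fact does hold for $t\ge3$, using connectivity of the Kneser graph on $2$-subsets of a $(3t-2)$-set. For $t=2$ the strict hypothesis forces $N=5$, so this is repairable, but it has to be stated.
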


The following corollary combines the preceding results in the form needed below.
The point of this corollary is that it allows the matching number $t$ to be small.

\begin{corollary}
    \label[corollary]{cor:large-ground-emc}
    Fix $k\ge2$ and $\rho>0$.  There exists $N_0=N_0(k,\rho)$ such that the
    following holds for all integers $N\ge N_0$ and $t\ge1$.  If
    $\G\subseteq\binom{[N]}k$ satisfies $\nu(\G)<t$ and
    \(N\ge \left(\frac{5k-2}{3}+\rho\right)t,\)
    then
    \(|\G|\le \binom Nk-\binom{N-t+1}k.\)
\end{corollary}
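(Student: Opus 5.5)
The plan is to split according to the size of $t$ relative to a large constant $T=T(k,\rho)$. The two regimes are handled by different earlier results: \cref{thm:FK2022-emc} for large $t$, and \cref{thm:EM-fixed} (or \cref{lem:EG} when $k=2$) for bounded $t$. $N_0$ will be the maximum of the thresholds arising in the two cases.

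\emph{Large $t$.} Suppose $t-1\ge s_{\mathrm{FK}}(k)$ and $k\ge3$. Put $s=t-1$, so $\nu(\G)\le s$. We need $N\ge \frac{5}{3}ks-\frac23 s=\frac{5k-2}{3}(t-1)$, and this follows from $N\ge(\frac{5k-2}{3}+\rho)t$. Then \cref{thm:FK2022-emc} gives $|\G|\le\binom Nk-\binom{N-t+1}k$ directly.

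For $k=2$ we instead use \cref{lem:EG} with $s=t-1$. We must check that $\binom{2t-1}{2}\le\binom N2-\binom{N-t+1}2$ whenever $N\ge(\frac83+\rho)t$. The right-hand side equals $(t-1)(2N-t)/2$, and the left-hand side equals $(2t-1)(t-1)$. So the condition is $2N-t\ge 2(2t-1)$, i.e.\ $N\ge (5t-2)/2$, which holds since $8/3>5/2$. Thus for $k=2$ no split is needed at all. The case $t=1$ is trivial, since then $\G=\emptyset$.

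\emph{Bounded $t$.} Now $k\ge3$ and $2\le t\le T:=s_{\mathrm{FK}}(k)$. For each such $t$, \cref{thm:EM-fixed} supplies $N_0(k,t)$. Take $N_0=\max_{t\le T}N_0(k,t)$; this is finite because only finitely many $t$ occur, and it depends only on $k$ (and on $\rho$ only trivially). For $N\ge N_0$ the bound follows. In this regime the hypothesis $N\ge(\frac{5k-2}{3}+\rho)t$ is not even needed.

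The only subtle point is matching the off-by-one conventions between $\nu(\G)<t$ and $\nu\le s$ and checking that the linear lower bound on $N$ survives the shift $t\to t-1$. The $\rho$ slack handles this comfortably. The role of $\rho$ in the statement is mostly to allow strict room, since with $\rho=0$ the large-$t$ case already works. So I expect no genuine obstacle; the main care is to make $N_0$ uniform over all $t$, which the finite maximum over small $t$ achieves.
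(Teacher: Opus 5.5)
Your proposal is correct and follows essentially the same route as the paper: the trivial case $t=1$, Lemma~\ref{lem:EG} for $k=2$ via the check $N\ge(5t-2)/2$, Theorem~\ref{thm:FK2022-emc} with matching parameter $t-1$ once $t-1\ge s_{\mathrm{FK}}(k)$, and Theorem~\ref{thm:EM-fixed} with a finite maximum of thresholds for the remaining bounded $t$. Your side remark that the slack $\rho$ is not actually needed is also accurate, given the form in which Theorem~\ref{thm:FK2022-emc} is stated.
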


\begin{proof}
    If $t=1$, then $\nu(\G)<1$, so $\G=\emptyset$ and the desired bound is
    immediate.

    If $k=2$, then  \cref{lem:EG} gives $|\G|\le
        \max\{\binom{2t-1}{2},
        \binom N2-\binom{N-t+1}{2}\}$.
    The second term dominates the first whenever
    \(N\ge \frac{5t-2}{2}.\)
    This follows from $N\ge (8/3+\rho)t$.
    Hence the required bound holds for $k=2$.

    Now suppose $k\ge3$.
    Let $s_{\mathrm{FK}}(k)$ be the threshold in
    Theorem~\ref{thm:FK2022-emc}, and put $t_0=s_{\mathrm{FK}}(k)+1$.
    If $t\ge t_0$, then $t-1\ge s_{\mathrm{FK}}(k)$ and
    $\nu(\G)<t$ gives $\nu(\G)\le t-1$.  Moreover, the hypothesis gives
    \[
        N\ge \left(\frac{5k-2}{3}+\rho\right)t
        > \frac{5k-2}{3}(t-1),
    \]
    so all hypotheses of Theorem~\ref{thm:FK2022-emc}, with matching parameter
    $t-1$, are satisfied.  The theorem yields
    \(|\G|\le \binom Nk-\binom{N-t+1}k.\)

    It remains to handle $2\le t<t_0$.  For each such $t$,
    Theorem~\ref{thm:EM-fixed} gives the same conclusion once
    $N\ge N_0(k,t)$.  Taking the maximum of these finitely many thresholds
    gives a single $N_0(k,\rho)$.
\end{proof}

For integers $k\ge3$ and $N\ge u+k$, put
\[
    h_k(N,u) \coloneqq
    \binom Nk-\binom{N-u}k+1-\binom{N-u-k}{k-1}.
\]
Now we record the following Hilton-Milner type results.
They can be seen as stability results concerning the Erd\H{o}s Matching Conjecture.

\begin{theorem}[Guo, Lu and Mao \cite{GuoLuMao}]
    \label{thm:m3-GLM}
    There exists an integer $N_0$ such that the following holds for all
    integers $N\ge N_0$ and $u\ge1$ with $N\ge3u+2$.
    If $G\subseteq\binom{[N]}3$ satisfies
    \(\nu(G)\le u<\tau(G),\)
    then
    \(|G|\le \max\left\{h_3(N,u),\binom{3u+2}{3}\right\}.\)
\end{theorem}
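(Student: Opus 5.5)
This result is quoted from Guo, Lu and Mao \cite{GuoLuMao}, and the paper uses it as a black box. If I had to prove it myself, I would follow the standard Hilton--Milner strategy for the Erd\H{o}s Matching Conjecture. The idea is to split vertices into high and low degree, use greedy extension to pin down the high-degree part, and then compare sizes. Fix a small constant $c>0$ and let $D=\{x:\deg_G(x)>cN^2\}$.

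\textbf{Step 1: bounding $|D|$.} I would first show $|D|\le u$. Suppose $x_1,\dots,x_{u+1}\in D$. Choose disjoint edges $e_i\ni x_i$ one at a time. At each step at most $3(u+1)$ vertices are forbidden, and they kill at most $3(u+1)N\ll cN^2$ edges through $x_i$, provided $N$ is large and $u\le N/3$. This gives a $(u+1)$-matching, a contradiction.

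\textbf{Step 2: the case $|D|=u$.} Let $G_0$ be the set of edges disjoint from $D$. Since $\tau(G)>u$, the family $G_0$ is nonempty; fix $e\in G_0$. For any $u$ edges through the distinct vertices of $D$, avoiding $e$ and pairwise disjoint, adding $e$ would give a $(u+1)$-matching. Because $D$ has high degree, this greedy extension can always be completed unless the edges through $D$ are heavily restricted. Running the usual Hilton--Milner count against $e$ shows that $G$ loses, relative to the full star $\binom N3-\binom{N-u}3$, at least about $\binom{N-u-3}{2}$ edges. The exact accounting should yield $|G|\le h_3(N,u)$.

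\textbf{Step 3: the case $|D|=u-j$ with $j\ge1$.} Now let $G_0$ be the set of edges avoiding $D$. Again by greedy extension through $D$, we get $\nu(G_0)\le j$, and every vertex of $G_0$ has degree at most $cN^2$. Therefore
\[
|G_0|\le 3j\,cN^2,
\qquad
|G|\le \binom N3-\binom{N-u+j}3+3jcN^2 .
\]
The loss $\binom{N-u}3-\binom{N-u+j}3$ is of order $jN^2$ with a constant not depending on $c$. So for $c$ small and $u\le(1/3-\delta)N$, this bound falls below $h_3(N,u)$. This settles the regime where $u$ is not too close to $N/3$.

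\textbf{Step 4: the main obstacle.} The hard regime is $3u+2\le N\le(3+\delta')u$, where the clique $\binom{[3u+2]}3$ takes over and degree arguments are too crude. Here I would use shifting. The difficulty is that shifting preserves $\nu(G)\le u$ but may destroy $\tau(G)>u$. I would first try Frankl's device: perform shifts $S_{ij}$ one at a time, and stop at the first shift that would drop $\tau$ to $u$. At that moment the family has a rigid structure: it is almost covered by $u$ vertices plus one pair. From that structure one can bound it directly.

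Otherwise we reach a shifted family with $\tau>u$. For that family I would apply Frankl's proof of \cref{thm:m3-EMC3} in its stability form. Since the family is not contained in a $u$-star, that argument forces it to be at most the clique $\binom{[3u+2]}3$. Making the stopping-time structure in the first branch compatible with the bound $\max\{h_3,\binom{3u+2}3\}$ is where I expect most of the work to lie.
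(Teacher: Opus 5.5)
The paper does not prove this statement; it imports it from Guo--Lu--Mao as a black box. Your sketch therefore has to stand on its own, and it has genuine gaps.

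\textbf{Steps~1--3 only work for small $u$, and Step~3 fails at $j=1$.} Step~1 needs $3(u+1)N<cN^2$, i.e.\ $u<cN/3$; the assumption $u\le N/3$ is not enough. In fact $|D|\le u$ is false once $u$ is a fixed fraction of $N$. For $N=4u$, the clique $\binom{[3u+2]}{3}$ has $\nu=u<\tau$, yet all $3u+2$ of its vertices have degree about $\frac{9}{32}N^2$. So Steps~1--3 can only cover $N\ge Cu$ for a large constant $C$, not the range $u\le(1/3-\delta)N$.

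Even in that regime, Step~3 fails for $j=1$. Since $\binom{N-u}{2}-\binom{N-u-3}{2}=3(N-u)-6$, your bound $\binom N3-\binom{N-u+1}3+3cN^2$ exceeds $h_3(N,u)$ by $3cN^2-3(N-u)+5>0$. This is not slack you can tune away. The Hilton--Milner-type extremal family consists of all triples meeting a $(u-1)$-set $X$, plus a Hilton--Milner family outside $X$. When $u\ll cN$ it has $|D|=u-1$, so equality lives exactly in this case.

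The case $|D|=u-1$ can be repaired as follows. $G_0$ is intersecting by greedy extension, and it is not a star, since otherwise $\tau(G)\le u$. The Hilton--Milner theorem on the $N-u+1$ vertices outside $D$ then gives exactly $h_3(N,u)$. Conversely, your Step~2 case $|D|=u$ is vacuous in this regime: greedy extension through the $u$ high-degree vertices, avoiding $e$, always succeeds and yields a $(u+1)$-matching. After this repair, Steps~1--3 recover only the easy range $N\ge Cu$, which \cref{thm:FK-HM} already covers.

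\textbf{Step~4 misplaces the difficulty, and the middle range is left open.} Consider $3u+2\le N\le(3+\delta')u$ with $\delta'$ below about $0.48$; the crossover is at $N/u\approx(3+\sqrt{321})/6\approx 3.49$. There $\binom{3u+2}{3}\ge\binom N3-\binom{N-u}3>h_3(N,u)$, so the claim follows at once from \cref{thm:m3-EMC3}. No use of $\tau(G)>u$ and no shifting is needed.

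The real content of the theorem is the intermediate range, roughly $3.49u\lesssim N\lesssim 6u$; above $6u$, \cref{thm:FK-HM} applies. In this range $h_3(N,u)$ is the maximum, and it lies $\Theta(N^2)$ below Frankl's bound. Neither your degree/greedy argument nor \cref{thm:m3-EMC3} reaches it. Your stopping-time observation is correct: if $T$ is a $u$-cover of $S_{ij}(G)$, then $T\cup\{j\}$ covers $G$. But turning that structure into the sharp bound $\max\{h_3(N,u),\binom{3u+2}{3}\}$ in this middle range is essentially the whole theorem, and the proposal leaves it undone.
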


\begin{theorem}[Frankl and Kupavskii \cite{FKstability}]
    \label{thm:FK-HM}
    Suppose that $k\ge 3$.  For every fixed $\eta>0$ there exists an integer
    $s_1=s_1(k,\eta)$ such that the following holds.  Let $n$ and $s$ be
    integers, and let $G\subseteq\binom{[n]}{k}$ satisfy
    $\nu(G)=s<\tau(G)$.  If either
    \[
        n\ge (s+\max\{25,2s+2\})k,
    \]
    or
    \[
        s\ge s_1(k,\eta)
        \qquad\text{and}\qquad
        n\ge (2+\eta)sk,
    \]
    then $|G|\le h_k(n,s)$.
\end{theorem}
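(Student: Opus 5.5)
The plan is a Hilton--Milner type argument driven by vertex degrees, in the spirit of the classical $\nu=1$ case. The hypothesis $n\ge(2+\eta)sk$ supplies enough room to extend partial matchings greedily. Fix $G$ with $\nu(G)=s<\tau(G)$, and assume $|G|>h_k(n,s)$; the goal is a contradiction. I would choose a threshold $D$ of order $c\binom{n-2}{k-2}\cdot sk$ and let
\[T=\{x\in[n]:\deg_G(x)>D\}\]
be the set of high-degree vertices.

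\textbf{Step 1 (few high-degree vertices).} I would show $|T|\le s$. If $x_1,\dots,x_{s+1}\in T$, I build $s+1$ pairwise disjoint sets greedily. At each step the sets already chosen, together with the remaining vertices of $T$, cover at most $(s+1)k$ vertices. Each such vertex kills at most $\binom{n-2}{k-2}$ sets through $x_i$. Since $\deg(x_i)>D>(s+1)k\binom{n-2}{k-2}$, a suitable set through $x_i$ survives. This contradicts $\nu(G)=s$. The same greedy argument shows that $G(\overline T)=\{F\in G:F\cap T=\emptyset\}$ satisfies $\nu(G(\overline T))\le s-|T|$.

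\textbf{Step 2 (case $|T|=t\le s-1$).} In this case $|G|\le |\{F:F\cap T\neq\emptyset\}|+|G(\overline T)|$. Every vertex outside $T$ has degree at most $D$. I would bound $|G(\overline T)|$ by applying the maximum-degree (Frankl-type) bound to the family with matching number at most $s-t$: it has at most $(s-t)\,k\,D$ members, or, more cleanly, at most $(s-t)\binom{n-1}{k-1}(1-\delta)$ for some $\delta=\delta(\eta)>0$ coming from the low maximum degree. Comparing this with
\[\binom nk-\binom{n-t}k\;+\;\Bigl[\binom{n-t}k-\binom{n-s}k\Bigr]\]
shows that $|G|$ falls short of $h_k(n,s)$ by a positive fraction of $\binom{n-1}{k-1}$. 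The condition $n\ge(2+\eta)sk$ is what makes $\binom{n-t}{k}-\binom{n-s}{k}$ genuinely smaller than $(s-t)\binom{n-1}{k-1}$ by such a fraction, while the correction term $\binom{n-s-k}{k-1}$ in $h_k$ is only a lower-order saving.

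\textbf{Step 3 (case $|T|=s$).} Since $\tau(G)>s$, the family $G(\overline T)$ is nonempty; pick $F\in G(\overline T)$. The key structural claim is that for each $y\in T$, few sets $A$ with $A\cap T=\{y\}$ and $A\cap F=\emptyset$ can lie in $G$. Indeed, such an $A$ together with $F$ and a greedy matching of sets through the other $s-1$ vertices of $T$ (avoiding $A\cup F$, possible by high degree) would give $s+1$ disjoint sets. This argument only excludes, for all but at most one $y\in T$, essentially all such sets; for a single $y$ it forces at least $\binom{n-s-k}{k-1}$ missing sets, exactly as in the Hilton--Milner family. I would then combine three facts: $G(\overline T)$ has matching number $0$ relative to the remaining structure, so it consists of sets pairwise forced to intersect; this bounds $|G(\overline T)|$ by the number of sets through a fixed element of $F$ disjoint from $T$. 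With that, $|G|\le h_k(n,s)$, with equality forcing the Hilton--Milner configuration.

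The main obstacle is Step 2 in the regime $n$ only slightly above $(2+\eta)sk$. There the crude greedy estimates lose constant factors, and one needs a sharp degree-based bound on families with bounded matching number and small maximum degree. I would first try Frankl's averaging over random cyclic or partition orderings of $[n]$ into $s+1$ blocks to get $|G(\overline T)|\le (s-t)\cdot(\text{max degree})$ essentially tightly. The size of $s_1(k,\eta)$ is then chosen to absorb the error terms. The alternative range $n\ge(s+\max\{25,2s+2\})k$ would be handled by the same argument, where the extra slack makes every greedy estimate comfortable.
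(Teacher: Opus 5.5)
The paper does not prove this statement. It is quoted from Frankl--Kupavskii and used only as a black box in \cref{cor:VC}. Judged on its own, your outline has two concrete failures.

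\textbf{The degree dichotomy carries no information in the stated ranges.} The greedy extension in Step~1 needs $D\ge sk\binom{n-2}{k-2}=\frac{sk(k-1)}{n-1}\binom{n-1}{k-1}$. At $n\approx(2+\eta)sk$ this is about $\frac{k-1}{2+\eta}\binom{n-1}{k-1}$. For $k\ge4$ and small $\eta$ this exceeds the largest possible degree $\binom{n-1}{k-1}$, so $T=\emptyset$ for every $G$; the first range has the same problem for larger $k$. For $k=3$ near $n=(2+\eta)sk$, the window between $D$ and $\binom{n-1}{2}$ has relative width only about $\eta/2$. So Step~2 with $t=0$ and essentially no degree restriction is the whole theorem, and the appeal to Frankl's averaging is a hope rather than an argument. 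Greedy degree arguments of this type work only when $n$ is large compared with $sk^2$; this is essentially the classical Bollob\'as--Daykin--Erd\H{o}s approach, valid for $n$ of order $k^3s$. Getting down to $(2+\eta)sk$ is exactly what the cited result achieves, and it requires sharper Erd\H{o}s Matching Conjecture machinery rather than greedy extension.

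\textbf{The extremal configuration is in the wrong case, even where $T$ is meaningful.} By your own Step~1, $|T|=s$ gives $\nu(G(\overline T))\le 0$, so $T$ covers $G$, contradicting $\tau(G)>s$. Hence Step~3 is vacuous: $F$ together with greedy sets through all of $T$ is already an $(s+1)$-matching.

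In the Hilton--Milner-type family, the special vertex $y$ has degree $\binom{n-1}{k-1}-\binom{n-s-k}{k-1}\le(s+k-1)\binom{n-2}{k-2}<D$. So that family has $|T|=s-1$ and falls under Step~2. There you claim a strict deficit of order $\binom{n-1}{k-1}$, which is impossible since this family attains $h_k(n,s)$.

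For $t=s-1$, the required bound is $|G(\overline T)|\le\binom{n-s}{k-1}-\binom{n-s-k}{k-1}+1$. This is exactly the Hilton--Milner theorem for the intersecting family $G(\overline T)$ on $n-s+1$ vertices. It applies because $G(\overline T)$ cannot be a star: a star with centre $z$ would make $T\cup\{z\}$ an $s$-element cover. Step~2 never uses $\tau(G)>s$. Moreover, no maximum-degree bound such as $(1-\delta)\binom{n-1}{k-1}$ or $kD$ comes close to this target, which is at most $k\binom{n-s-1}{k-2}+1$.

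Relatedly, when $n=\Theta(sk)$ the term $\binom{n-s-k}{k-1}$ is a constant fraction of $\binom{n-1}{k-1}$. So it is not a lower-order correction on the scale at which you measure your gaps.
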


\begin{corollary}
    \label[corollary]{cor:VC}
    Fix $k\ge3$ and $\rho>0$.  There exists a constant
    $N_0=N_0(k,\rho)$ such that the following holds for all integers
    $N\ge N_0$ and $t\ge1$.  If
    $G\subseteq\binom{[N]}k$ satisfies $\tau(G)>t\ge \nu(G)$ and $N\ge (2k+\rho)t$, then
    \[
        |G|
        \le
        \binom{N}{k}
        -
        \binom{N-t}{k}
        -
        \binom{N-t-k}{k-1}
        +
        N^{k-2}.
    \]
\end{corollary}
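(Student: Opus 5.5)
The plan is to reduce \cref{cor:VC} to \cref{thm:FK-HM} by first fixing the matching number exactly, and then to absorb the error terms into the slack $N^{k-2}$.

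Write $u\coloneqq\nu(G)\le t<\tau(G)$. If $u=0$, then $G=\emptyset$ and $\tau(G)=0$, which contradicts $\tau(G)>t\ge1$. So we may assume $1\le u\le t$.

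\emph{Case $u<t$.} Here we do not need the Hilton--Milner structure at all. Choosing disjoint edges gives $\tau(G)\le ku$, and the plain matching bound suffices. We have $\nu(G)<u+1\le t$, and $N\ge(2k+\rho)t$ is well above $\bigl(\frac{5k-2}{3}+\rho\bigr)(u+1)$ once $N$ is large (note that $u+1\le t$). Hence \cref{cor:large-ground-emc} gives
\[
|G|\le\binom Nk-\binom{N-u}k .
\]
Since $u\le t-1$, we get
\[
\binom Nk-\binom{N-u}k\le \binom Nk-\binom{N-t+1}k=\binom Nk-\binom{N-t}k-\binom{N-t}{k-1}.
\]
We also have $\binom{N-t}{k-1}\ge\binom{N-t-k}{k-1}$, so the target bound holds with room to spare. (If $u+1=1$, i.e.\ $G=\emptyset$, the bound is trivial.)

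\emph{Case $u=t$.} Now $\nu(G)=t<\tau(G)$, and we apply \cref{thm:FK-HM} with $s=t$ and $\eta=\rho/(2k)$, so that $(2+\eta)tk=(2k+\rho/2)t\le N$.
\begin{itemize}
\item If $t\ge s_1(k,\eta)$, the second alternative of \cref{thm:FK-HM} applies, giving $|G|\le h_k(N,t)=\binom Nk-\binom{N-t}k-\binom{N-t-k}{k-1}+1$, which is at most the target since $N^{k-2}\ge1$.
\item If $t<s_1(k,\eta)$, then $t$ is bounded by a constant depending only on $k$ and $\rho$. Taking $N_0\ge (s_1+\max\{25,2s_1+2\})k$ makes the first alternative hold, and we get the same bound.
\end{itemize}
This is why the corollary allows small $t$: the two regimes of \cref{thm:FK-HM} together cover all $t$ once $N$ is large.

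The main obstacle is bookkeeping rather than substance. One point is choosing $N_0(k,\rho)$ as the maximum of the thresholds from \cref{cor:large-ground-emc} (applied with a $\rho'$ chosen so that $(2k+\rho)t\ge(\frac{5k-2}{3}+\rho')(u+1)$ whenever $u+1\le t$) and the bounded-$t$ threshold of \cref{thm:FK-HM}. The other is checking that \cref{thm:FK-HM} requires $\nu(G)=s$ exactly, which is why the case $u<t$ is handled separately by the Erd\H{o}s matching bound. The $N^{k-2}$ slack is not really needed in either case; it presumably appears in the statement for convenience in later applications.
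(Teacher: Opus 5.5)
Your proof is correct, but it handles the case $u<t$ differently from the paper. The paper does not split into cases. Since $\nu(G)=u<\tau(G)$ and $N\ge(2k+\rho)t\ge(2k+\rho)u$, it applies \cref{thm:FK-HM} directly with matching parameter $s=u$ rather than $s=t$, getting $|G|\le h_k(N,u)$. It then notes that $h_k(N,u)$ is nondecreasing in $u$, because $\binom{N-u}{k}\ge\binom{N-t}{k}$ and $\binom{N-u-k}{k-1}\ge\binom{N-t-k}{k-1}$, and so passes to $h_k(N,t)$. You cite the requirement $\nu(G)=s$ exactly in \cref{thm:FK-HM} as the reason for your split, but it is no obstruction: the hypothesis $\nu<\tau$ holds for the actual matching number, so one simply takes $s=\nu(G)$. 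Your route instead uses \cref{cor:large-ground-emc} when $u<t$, and so relies on the Frankl--Kupavskii EMC bound and Erd\H{o}s's fixed-$t$ theorem. This is valid: $u+1\le t$ already gives $N\ge(2k+\rho)(u+1)>\bigl(\frac{5k-2}{3}+\rho\bigr)(u+1)$, so beyond $N\ge N_0$ no further largeness of $N$ is needed. It even gives the slightly stronger bound $\binom Nk-\binom{N-t}k-\binom{N-t}{k-1}$ in that case. The cost is an extra external input and an extra threshold in $N_0$, whereas the paper needs only \cref{thm:FK-HM} plus monotonicity. Your remark that $\tau(G)\le ku$ is true but is not used anywhere.
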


\begin{proof}
    Put $u \coloneqq \nu(G)$.  Since $\tau(G)>t\ge u$, we have $u<\tau(G)$.
    Also $u\ge1$, because otherwise $G=\emptyset$ and $\tau(G)=0$.

    We first verify that Theorem~\ref{thm:FK-HM} applies to $G$ with matching
    parameter $u$.  Let $\eta=\rho/k$, and let $u_1=s_1(k,\eta)$ be the
    threshold in the second part of Theorem~\ref{thm:FK-HM}.  If $u\ge u_1$,
    then
    \[
        N\ge (2k+\rho)t\ge (2k+\rho)u
        =\left(2+\frac{\rho}{k}\right)ku
        =(2+\eta)ku,
    \]
    so the second part of Theorem~\ref{thm:FK-HM} applies.  If
    $1\le u<u_1$, then the first part applies once
    \(N\ge k\bigl(u+\max\{25,2u+2\}\bigr).\)
    We therefore choose $N_0=N_0(k,\rho)\ge1$; when $u_1>1$, we also require
    \[
        N_0\ge
        \max_{1\le v<u_1} k\bigl(v+\max\{25,2v+2\}\bigr).
    \]
    Thus, for all $N\ge N_0$,
    Theorem~\ref{thm:FK-HM} gives
    \[
        |G|
        \le
        h_k(N,u)
        =
        \binom Nk
        -
        \binom{N-u}k
        +
        1
        -
        \binom{N-u-k}{k-1}.
    \]
    Since $u\le t$, we have $\binom{N-u}k\ge \binom{N-t}k$ and $\binom{N-u-k}{k-1}\ge \binom{N-t-k}{k-1}$.
    Therefore
    \[
        |G|
        \le
        \binom Nk
        -
        \binom{N-t}k
        -
        \binom{N-t-k}{k-1}
        +1
        \le
        \binom Nk
        -
        \binom{N-t}k
        -
        \binom{N-t-k}{k-1}
        +N^{k-2}.
    \]
    This proves the claim.
\end{proof}

The next input converts the absence of a perfect matching into a lower bound on missing edges.
It is the mechanism that later pays for extra low-layer sets.
\begin{lemma}[Baranyai \cite{Baranyai}]
    \label[lemma]{lem:baranyai}
    For integers $q,t\ge 1$, the complete $q$-uniform hypergraph on $qt$
    vertices can be decomposed into $\binom{qt-1}{q-1}$ perfect matchings.
\end{lemma}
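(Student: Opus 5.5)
The statement is Baranyai's theorem: the complete $q$-uniform hypergraph on $qt$ vertices decomposes into $\binom{qt-1}{q-1}$ perfect matchings. My plan is the classical flow/integrality induction.

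Set $n=qt$ and $M=\binom{n-1}{q-1}$. The count is forced: each perfect matching has $t$ edges, and $M\cdot t=\binom nq$. So it suffices to partition $\binom{[n]}q$ into $M$ perfect matchings.

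I will prove a stronger, inductive statement that allows partial edges. For $0\le j\le n$, there exist $M$ multisets $\mathcal A_1,\dots,\mathcal A_M$ with the following properties:
\begin{enumerate}
\item each $\mathcal A_i$ consists of exactly $t$ subsets of $[j]$, possibly empty or repeated, that are pairwise disjoint and together cover $[j]$;
\item every $S\subseteq[j]$ occurs, summed over all $i$, exactly $\binom{n-j}{q-|S|}$ times.
\end{enumerate}
The base case $j=0$ takes each $\mathcal A_i$ to be $t$ copies of $\emptyset$. This works because $\binom nq=Mt$.

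For the step $j\to j+1$, I add element $j+1$ to exactly one set in each $\mathcal A_i$, chosen so that the new multiplicity condition holds. To find the choices, I build a network:
\begin{itemize}
\item a source $\sigma$ with an arc of capacity $1$ to each node $\mathcal A_i$;
\item from $\mathcal A_i$, an arc to each node $S\subseteq[j]$, with capacity equal to the number of copies of $S$ in $\mathcal A_i$;
\item from each $S$, an arc to the sink with capacity $\binom{n-j-1}{q-|S|-1}$.
\end{itemize}

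A fractional flow of value $M$ exists: route $t^{-1}\cdot(\text{multiplicity of }S\text{ in }\mathcal A_i)\cdot\frac{q-|S|}{n-j}$ along each path. Two checks are needed.
\begin{itemize}
\item The outflow from each $\mathcal A_i$ is $\sum_{S}\frac{q-|S|}{t(n-j)}$ over the $t$ members of $\mathcal A_i$. Since the sizes in $\mathcal A_i$ sum to $j$, this equals $\frac{qt-j}{t(n-j)}=1/t\cdot t/t$, i.e. $1$, using $n=qt$.
\item The inflow into each $S$ is $\binom{n-j}{q-|S|}\frac{q-|S|}{n-j}=\binom{n-j-1}{q-|S|-1}$, which exactly meets its capacity.
\end{itemize}
All capacities are integers, so the integral flow theorem gives an integral flow of value $M$. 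Each $\mathcal A_i$ sends one unit to some $S$, and I add $j+1$ to that member.

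The new multiplicities are then correct:
\begin{itemize}
\item a set $S\cup\{j+1\}$ appears $\binom{n-j-1}{q-|S|-1}$ times;
\item a set $S$ that did not receive $j+1$ appears $\binom{n-j}{q-|S|}-\binom{n-j-1}{q-|S|-1}=\binom{n-j-1}{q-|S|}$ times.
\end{itemize}
These are exactly the required values at level $j+1$.

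At $j=n$, only sets with $|S|=q$ have nonzero multiplicity $\binom0{0}=1$. Each $q$-set therefore appears exactly once overall. Each $\mathcal A_i$ is a set of $t$ disjoint sets covering $[n]$, all of size $q$, so it is a perfect matching.

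The main obstacle is the fractional-flow computation, specifically verifying that the flow saturates both the source arcs and the sink arcs. Everything else is bookkeeping.
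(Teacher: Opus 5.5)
The paper does not prove this lemma. It quotes Baranyai's theorem and uses it only as a black box in the averaging step of \cref{lem:blocker}. You instead give the standard flow-integrality proof. Its structure is sound: the inductive statement with multiplicities $\binom{n-j}{q-|S|}$, the network, the Pascal-identity bookkeeping, and the endgame at $j=n$. However, the computation you yourself call the crux contains an error that, taken literally, breaks the step.

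\textbf{The error.} You route $t^{-1}\cdot\mathrm{mult}_i(S)\cdot\frac{q-|S|}{n-j}$ along each path.
\begin{itemize}
\item With that flow, the outflow from $\mathcal A_i$ is $\frac{qt-j}{t(n-j)}=\frac{n-j}{t(n-j)}=\frac1t$, not $1$. Your step ``$1/t\cdot t/t$, i.e.\ $1$'' does not hold.
\item The inflow into $S$ would likewise be $\frac1t\binom{n-j-1}{q-|S|-1}$. Your inflow check silently drops the $t^{-1}$.
\item As written, the flow therefore has value only $M/t$, and the integrality theorem gives nothing useful.
\end{itemize}

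\textbf{The fix.} Delete the factor $t^{-1}$ and put flow $\mathrm{mult}_i(S)\,\frac{q-|S|}{n-j}$ on the arc $\mathcal A_i\to S$. Then the outflow from $\mathcal A_i$ is $\sum_{S\in\mathcal A_i}\frac{q-|S|}{n-j}=\frac{qt-j}{n-j}=1$. Your inflow identity $\binom{n-j}{q-|S|}\frac{q-|S|}{n-j}=\binom{n-j-1}{q-|S|-1}$ then applies exactly as stated.

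You should also state two facts you use implicitly.
\begin{itemize}
\item \emph{The middle arcs respect their capacities.} Any $S$ occurring in some $\mathcal A_i$ has $\binom{n-j}{q-|S|}>0$, hence $0\le q-|S|\le n-j$. So the flow on $\mathcal A_i\to S$ lies in $[0,\mathrm{mult}_i(S)]$.
\item \emph{The integral flow of value $M$ saturates every sink arc.} The corrected fractional flow saturates all sink arcs, including those of capacity $0$ for non-occurring $S$, and has value $M$. So the total sink capacity is exactly $M$. This saturation is what guarantees that $S\cup\{j+1\}$ appears exactly $\binom{n-j-1}{q-|S|-1}$ times, not merely at most that many.
\end{itemize}
With these corrections your argument is a complete, self-contained proof of a result the paper only cites.
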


We use this consequence in the following blocker lemma.
\begin{lemma}
    \label[lemma]{lem:blocker}
    Let $q\ge 2$, let $G\subseteq\binom{X}{q}$, let $|X|=qt+d$ with $t\ge 1$
    and $d\ge 0$, and put $Z \coloneqq \binom{X}{q}\setminus G$.
    If $\nu(G)<t$, then
    \[
        |Z|
        \ge
        \max\left\{
        \frac1t\binom{qt+d}{q},
        \binom{d+q}{q}
        \right\}.
    \]
\end{lemma}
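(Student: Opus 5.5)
We prove the two lower bounds separately, since the statement asks for their maximum.

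\emph{First bound: $|Z|\ge \frac1t\binom{qt+d}{q}$.} We use an averaging argument over perfect matchings of random $qt$-subsets. Fix any $Y\in\binom{X}{qt}$. By \cref{lem:baranyai}, $\binom{Y}{q}$ decomposes into $\binom{qt-1}{q-1}$ perfect matchings, each consisting of $t$ disjoint $q$-sets. Since $\nu(G)<t$, none of these matchings lies entirely in $G$, so each contains at least one member of $Z$. Hence
\[
|Z\cap\tbinom{Y}{q}|\ge \binom{qt-1}{q-1}=\frac1t\binom{qt}{q}.
\]
Now sum this over all $Y\in\binom{X}{qt}$. Each $q$-set $B\in Z$ lies in exactly $\binom{qt+d-q}{qt-q}$ such sets $Y$. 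This gives
\[
|Z|\binom{qt+d-q}{qt-q}\ge \frac1t\binom{qt}{q}\binom{qt+d}{qt}.
\]
The identity $\binom{qt+d}{qt}\binom{qt}{q}=\binom{qt+d}{q}\binom{qt+d-q}{qt-q}$ simplifies this to $|Z|\ge\frac1t\binom{qt+d}{q}$.

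\emph{Second bound: $|Z|\ge\binom{d+q}{q}$.} We find a large complete subhypergraph inside $Z$. Let $M$ be a maximum matching of $G$, so $|M|\le t-1$ and $U\coloneqq\bigcup M$ has $|U|\le q(t-1)$. Then $|X\setminus U|\ge qt+d-q(t-1)=d+q$. By maximality of $M$, every $q$-subset of $X\setminus U$ avoids $G$, so it lies in $Z$. Therefore $|Z|\ge\binom{d+q}{q}$.

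Combining the two bounds gives the stated maximum. The only step needing care is the double-counting identity in the first bound, and it is a standard binomial identity. The hypotheses $q\ge2$ and $t\ge1$ are needed only so that \cref{lem:baranyai} applies.
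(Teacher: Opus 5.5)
Your proof is correct and follows the paper's argument: the first bound comes from Baranyai's decomposition of $\binom{Y}{q}$ plus averaging over $qt$-subsets (you double count over all $Y$, while the paper takes an expectation over a uniformly random $U$; the computation is the same), and the second bound comes from the $q$-subsets of $X$ avoiding the vertex set of a maximum matching of $G$. One small point: Baranyai's lemma as stated in the paper holds for all $q,t\ge1$, so the hypothesis $q\ge2$ is not actually used anywhere.
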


\begin{proof}
    Let $U$ be a uniformly random $qt$-subset of $X$. By
    Lemma~\ref{lem:baranyai}, the family $\binom{U}{q}$ decomposes into $\binom{qt-1}{q-1}$ perfect matchings. Since $\nu(G)<t$, no such perfect matching can lie entirely in $G$.
    Therefore
    \[
        \left|Z\cap\binom{U}{q}\right|
        \ge
        \binom{qt-1}{q-1}.
    \]

    Taking expectations,
    \[
        \mathbb E\left|Z\cap\binom{U}{q}\right|
        =
        |Z|\,
        \frac{\binom{qt+d-q}{qt-q}}{\binom{qt+d}{qt}}
        \ge
        \binom{qt-1}{q-1}.
    \]
    Hence
    \[
        |Z|
        \ge
        \frac{\binom{qt+d}{qt}\binom{qt-1}{q-1}}
        {\binom{qt+d-q}{qt-q}}
        =
        \frac1t\binom{qt+d}{q}.
    \]

    Let $M$ be a maximum matching in $G$. Then $|M|\le t-1$, and by
    maximality every edge of $G$ meets $V(M)$. Therefore every $q$-set
    contained in $X\setminus V(M)$ belongs to $Z$. Since
    \(|X\setminus V(M)| \ge qt+d-q(t-1) = d+q,\)
    we obtain $|Z|\ge \binom{d+q}{q}$.
\end{proof}

\section{Comparison setup}\label{sec:comparison}

Now we record the notation and assumptions used in the proof of
\cref{thm:main}.
The proof of \cref{thm:m3-lower-range} in \cref{sec:m3-threshold} will use the same notation, but with a different
range for $\ell$.
Assume from now on that $\eps$ is given with $0<\eps<(m+1)/(2m+1)$.
Put
\(\theta_m \coloneqq \frac{m+1}{2m+1}\) and \(\gamma \coloneqq \theta_m-\eps\).
Choose $s_0=s_0(m,\eps)$ sufficiently large.
Let $s,\ell$ be integers with $s\ge s_0$ and $1\le \ell \le \gamma s$.
Let $n=(m+1)s-\ell$.
Put
\(a \coloneqq \ell-1\) and \(r \coloneqq n-a=n-\ell+1=(m+1)s-2\ell+1\).

Given a family $\mathcal A\subseteq 2^{[n]}$ and a positive integer $k$, define
\begin{equation*}
    \mathcal A_i\coloneqq  \mathcal A\cap\binom{[n]}i,
    \quad
    \mathcal A_{\ge k}\coloneqq \bigcup_{i=k}^n \mathcal A_i,
    \quad
    \mathcal A_{\le k}\coloneqq \bigcup_{i=0}^k \mathcal A_i.
\end{equation*}
For a set $E\subseteq[n]$, write $\mathcal A(\overline E)
    \coloneqq
    \{Q\in\mathcal A:Q\cap E=\emptyset\}$ for the subfamily of $\mathcal A$ disjoint from $E$.
Let $\F\subseteq2^{[n]}$ be a family with $\nu(\F)<s$.
Let $\Y$ be the complement family of $\F$, that is, $\Y\coloneqq 2^{[n]}\setminus \F$.
Later, we shall frequently use the notation $\F_i,\Y_i,\F_{\le m},\F_{\ge m+1},\Y_{\ge m+1}$.

For a set $E$ with $|E|\le m$, define its \emph{deficit} by
\(\Delta(E) \coloneqq m+1-|E|.\)
Thus the empty set has deficit $m+1$, a singleton has deficit $m$, an
$(m-1)$-set has deficit $2$, and an $m$-set has deficit $1$.
For a family $\mathcal Q\subseteq\binom{[n]}{\le m}$, define its deficit by
\[
    \Delta(\mathcal Q)\coloneqq\sum_{Q\in\mathcal Q}\Delta(Q).
\]

For $0\le j\le m$, the number of $j$-sets in the canonical family $P(m,s,\ell;L)$ is
\[
    \Lambda_j(a,r)
    \coloneqq
    \left|
    \left\{E\in\binom{[n]}j: |E\cap L|\ge m+1-j\right\}
    \right|
    =
    \sum_{u=m+1-j}^{j}\binom au\binom r{j-u},
\]
where impossible binomial coefficients are interpreted as zero.  In particular,
\[
    \Lambda_0(a,r)=0,
    \qquad
    \Lambda_m(a,r)=\sum_{u=1}^{m}\binom au\binom r{m-u}
    =\binom nm-\binom rm.
\]
Put $\Lambda \coloneqq \sum_{j=0}^{m}\Lambda_j(a,r)$.
Thus $\Lambda$ is the total contribution of the canonical family in the layers of size at most $m$.
Because $P(m,s,\ell;L)$ contains every set of size at least $m+1$, the global comparison reduces to a single inequality: any surplus of $\F$ in the first $m$ layers must be paid for by missing sets of $\F$ in the layers of size at least $m+1$.
Consequently,
\[
    |P(m,s,\ell)|=
    \Lambda+
    \sum_{i\ge m+1}\binom ni.
\]

This reduction is captured by the following lemma.

\begin{lemma}\label[lemma]{lem:low-layer-comparison}
    If $|\F_{\le m}|\le \Lambda+|\Y_{\ge m+1}|$,
    then $|\F|\le |P(m,s,\ell)|$.
\end{lemma}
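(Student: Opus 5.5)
The proof is a direct bookkeeping argument: split $\F$ into its low and high layers and compare each part with the corresponding part of $P(m,s,\ell;L)$. First I would write
\[
    |\F| = |\F_{\le m}| + |\F_{\ge m+1}|.
\]
Since $\Y$ is the complement of $\F$ in $2^{[n]}$, the sets of size at least $m+1$ are partitioned into $\F_{\ge m+1}$ and $\Y_{\ge m+1}$. This gives
\[
    |\F_{\ge m+1}| = \sum_{i\ge m+1}\binom ni - |\Y_{\ge m+1}|.
\]

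Next I would apply the hypothesis $|\F_{\le m}|\le \Lambda+|\Y_{\ge m+1}|$ and add the two relations. The terms $|\Y_{\ge m+1}|$ cancel, leaving
\[
    |\F| \le \Lambda + \sum_{i\ge m+1}\binom ni .
\]
The right-hand side is exactly $|P(m,s,\ell)|$, by the identity $|P(m,s,\ell)|=\Lambda+\sum_{i\ge m+1}\binom ni$ recorded just before the lemma.

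To make that identity airtight, I would check that $P(m,s,\ell;L)$ contains every set of size at least $m+1$. This holds because such a set $A$ already satisfies $|A|+|A\cap L|\ge |A|\ge m+1$. For $j\le m$, a $j$-set $E$ lies in $P(m,s,\ell;L)$ exactly when $|E\cap L|\ge m+1-j$, so the number of such $j$-sets is $\Lambda_j(a,r)$, with $|L|=a$ and $|[n]\setminus L|=r$. Summing over $j$ gives $\Lambda$.

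There is no real obstacle here; the only step needing care is confirming that the layer counts $\Lambda_j(a,r)$ match the definition of $P(m,s,\ell;L)$.
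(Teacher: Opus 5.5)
Your proposal is correct and is essentially the paper's argument: split $\F$ into $\F_{\le m}$ and $\F_{\ge m+1}$, write $|\F_{\ge m+1}|=\sum_{i\ge m+1}\binom ni-|\Y_{\ge m+1}|$, apply the hypothesis, and let the $|\Y_{\ge m+1}|$ terms cancel. Your extra check that the $j$-layer of $P(m,s,\ell;L)$ has exactly $\Lambda_j(a,r)$ members simply confirms the identity $|P(m,s,\ell)|=\Lambda+\sum_{i\ge m+1}\binom ni$, which the paper records just before the lemma.
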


\begin{proof}
    Since
    \(|\F_{\ge m+1}|= \sum_{i\ge m+1}\binom ni-|\Y_{\ge m+1}|,\)
    we have
    \[
        |\F| =|\F_{\le m}|+|\F_{\ge m+1}|  \le
        \Lambda+|\Y_{\ge m+1}|+
        \left(\sum_{i\ge m+1}\binom ni-|\Y_{\ge m+1}|\right) =|P(m,s,\ell)|.
    \]
    This proves the desired result.
\end{proof}

The next lemma records the coefficient comparison used below.
\begin{lemma}\label[lemma]{lem:coefficient-comparison}
    For every $0\le\alpha\le \theta_m-\eps$,
    \[
        \frac{(m+1-2\alpha)^{m-1}}{(m-1)!}
        >
        \frac{\alpha}{2}\frac{(m+1-\alpha)^{m-2}}{(m-2)!}
    \]
    with a positive gap depending only on $m$ and $\eps$.
\end{lemma}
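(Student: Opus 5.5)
The plan is to clear denominators and reduce the claim to a monotonicity argument plus one endpoint check. Multiplying both sides by $(m-1)!$, the inequality becomes
\[
    f(\alpha)\coloneqq (m+1-2\alpha)^{m-1}-\frac{(m-1)\alpha}{2}(m+1-\alpha)^{m-2}>0 .
\]

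\emph{Monotonicity.} On the interval $0\le\alpha\le\theta_m<1$ both bases $m+1-2\alpha$ and $m+1-\alpha$ are positive. The first term $(m+1-2\alpha)^{m-1}$ is then strictly decreasing in $\alpha$. The subtracted term $\frac{(m-1)\alpha}{2}(m+1-\alpha)^{m-2}$ is nondecreasing there. This is seen by differentiating: up to a positive factor $(m+1-\alpha)^{m-3}$, the derivative is
\[
    (m+1-\alpha)-(m-2)\alpha=(m+1)-(m-1)\alpha>0 .
\]
For $m=3$ the exponent $m-2=1$ and this is immediate. Hence $f$ is strictly decreasing on $[0,\theta_m]$. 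It therefore suffices to prove $f(\theta_m)\ge 0$. Then for $\alpha\le\theta_m-\eps$ we get $f(\alpha)\ge f(\theta_m-\eps)>f(\theta_m)\ge0$. This yields a positive gap $f(\theta_m-\eps)$ that depends only on $m$ and $\eps$, as the statement requires.

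\emph{Endpoint computation.} Substituting $\alpha=\theta_m=\frac{m+1}{2m+1}$ gives
\[
    m+1-2\theta_m=\frac{(m+1)(2m-1)}{2m+1},
    \qquad
    m+1-\theta_m=\frac{2m(m+1)}{2m+1}.
\]
After cancelling the common factor $(m+1)^{m-1}/(2m+1)^{m-1}$, the condition $f(\theta_m)\ge0$ reduces to
\[
    (2m-1)^{m-1}\ \ge\ \frac{m-1}{2}\,(2m)^{m-2}.
\]
To verify this, write
\[
    (2m-1)^{m-1}/(2m)^{m-2}=(2m-1)\Bigl(1-\tfrac1{2m}\Bigr)^{m-2}.
\]
By Bernoulli's inequality, $(1-\frac1{2m})^{m-2}\ge 1-\frac{m-2}{2m}\ge\frac12$. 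So the left-hand ratio is at least $\frac{2m-1}{2}>\frac{m-1}{2}$. This shows in fact that $f(\theta_m)>0$ strictly.

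The only step needing care is the endpoint algebra, specifically getting the powers of $(m+1)$ and $(2m+1)$ to cancel correctly. Everything else is routine monotonicity. Since $f(\theta_m)>0$, one could even take the gap to be $f(\theta_m)$, which is uniform in $\eps$; the weaker form stated in the lemma is all that is needed.
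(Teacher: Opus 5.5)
Your proof is correct and follows the paper's route: monotonicity reduces everything to the endpoint $\alpha=\theta_m$, and both arguments arrive at the same inequality $(2m-1)^{m-1}>(m-1)2^{m-3}m^{m-2}$. The differences are minor. You finish with Bernoulli's inequality $(1-\frac{1}{2m})^{m-2}\ge\frac12$, where the paper uses $2m-1>2(m-1)$ together with $(\frac{m}{m-1})^{m-2}<e<4$. You also get the gap directly from strict monotonicity rather than by compactness, and you verify by differentiation the monotonicity of the right-hand side, which the paper only asserts.
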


\begin{proof}
    It is enough to prove the inequality at the larger endpoint
    $\alpha=\theta_m$.  Indeed, the left-hand side is strictly decreasing in
    $\alpha$, while the right-hand side is strictly increasing on
    $[0,\theta_m]$.  Therefore the inequality at $\alpha=\theta_m$ implies it
    throughout $[0,\theta_m-\eps]$.
    At this endpoint the desired inequality is equivalent to
    \((2m-1)^{m-1}>(m-1)2^{m-3}m^{m-2}.\)
    Since $2m-1>2(m-1)$, the left-hand side is larger than
    $2^{m-1}(m-1)^{m-1}$.  It is therefore enough to prove
    \(4(m-1)^{m-2}>m^{m-2}.\)
    For $m=3$ this is $8>3$, while for $m\ge4$ it follows from $\left(\frac{m}{m-1}\right)^{m-2}<e<4$.
    The strict inequality at the endpoint
    and compactness of $[0,\theta_m-\eps]$ give the required positive gap.
\end{proof}

The deficit notation measures how many vertices are saved by using a low-layer set instead of an $(m+1)$-set.
If a disjoint collection of low-layer sets saves at least $\ell$ vertices in total, then the remaining vertices have exactly the form required by the blocker lemma, and this forces many missing $(m+1)$-sets.

\begin{lemma}\label[lemma]{lem:deficit-completion}
    Fix an integer \(m\ge3\) and a real number \(0<\lambda<1\).  There are
    constants \(c_0=c_0(m,\lambda)>0\) and \(s_1=s_1(m,\lambda)\) such that
    the following holds for all \(s\ge s_1\).  Let
    \[
        1\le \ell\le\lambda s,
        \qquad
        n=(m+1)s-\ell.
    \]
    Suppose \(\F\subseteq2^{[n]}\) satisfies \(\nu(\F)<s\).  If there is a
    pairwise disjoint family \(\mathcal Q=\{Q_1,\ldots,Q_q\}\subseteq\F_{\le m}\) such that
    \(\Delta(\mathcal Q)\ge \ell,\)
    then \(|\Y_{m+1}|\ge c_0s^m\).
\end{lemma}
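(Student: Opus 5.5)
The plan is to complete the low-layer matching $\mathcal Q$ to a would-be $s$-matching using $(m+1)$-sets on the leftover vertices, and then apply \cref{lem:blocker} to the $(m+1)$-uniform family that lives there.

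\textbf{Step 1: trim $\mathcal Q$.} Repeatedly delete members of $\mathcal Q$ as long as the deficit of what remains is still at least $\ell$. Since $\Delta(\mathcal Q)\ge\ell$ holds initially, this ends at a nonempty pairwise disjoint subfamily, still called $\mathcal Q=\{Q_1,\ldots,Q_q\}$, with $\Delta(\mathcal Q)\ge\ell$. Minimality means that removing any $Q_i$ drops the deficit below $\ell$. Since every deficit satisfies $1\le\Delta(Q_i)\le m+1$, this gives
\[
\ell\le\Delta(\mathcal Q)\le \ell+m.
\]
Because each $\Delta(Q_i)\ge 1$, we also get $q\le \Delta(\mathcal Q)\le \ell+m$.

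\textbf{Step 2: the leftover ground set.} Put $X\coloneqq[n]\setminus\bigcup_i Q_i$, $t\coloneqq s-q$ and $d\coloneqq\Delta(\mathcal Q)-\ell\in[0,m]$. Using $|Q_i|=m+1-\Delta(Q_i)$,
\[
|X|=n-(m+1)q+\Delta(\mathcal Q)=(m+1)s-\ell-(m+1)q+\Delta(\mathcal Q)=(m+1)t+d.
\]
Moreover,
\[
t=s-q\ge s-\ell-m\ge(1-\lambda)s-m\ge \tfrac{1-\lambda}{2}s\ge1
\]
once $s\ge s_1(m,\lambda)$.

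\textbf{Step 3: the completion family has no $t$-matching.} Let $G\coloneqq\F_{m+1}\cap\binom{X}{m+1}$. Any $t$ pairwise disjoint members of $G$ lie in $X$, so they are disjoint from every $Q_i$. Together with $Q_1,\ldots,Q_q$ they would form $s$ pairwise disjoint members of $\F$, contradicting $\nu(\F)<s$. Hence $\nu(G)<t$.

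\textbf{Step 4: apply the blocker lemma.} Apply \cref{lem:blocker} with $q\to m+1\ge 4$, the set $X$, and the parameters $t,d$. This gives
\[
|\Y_{m+1}|\ \ge\ \Bigl|\binom{X}{m+1}\setminus G\Bigr|\ \ge\ \frac1t\binom{(m+1)t+d}{m+1}\ \ge\ \frac1t\binom{(m+1)t}{m+1}.
\]
The last quantity is at least $c\,t^{m}$ for a constant $c=c(m)>0$, for example $c=(m+1)^{m}/(2\,m!)$ once $t$ is large. Combined with $t\ge\frac{1-\lambda}{2}s$, this yields $|\Y_{m+1}|\ge c_0 s^m$ with $c_0=c\bigl(\tfrac{1-\lambda}{2}\bigr)^m$.

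\textbf{Main obstacle.} The only delicate point is ensuring $t$ is linear in $s$, which requires bounding $q$. An untrimmed $\mathcal Q$ could in principle contain up to about $n/(m+1)$ sets and leave $t$ tiny or negative. The minimality in Step 1 caps $q$ at $\ell+m\le\lambda s+m$, and since $\lambda<1$ this leaves $t=\Omega(s)$. The same minimality also keeps $d$ bounded, although the argument does not actually need that.
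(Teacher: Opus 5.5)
Your proof is correct and takes the same approach as the paper: trim $\mathcal Q$ to an inclusion-minimal subfamily, note that the leftover vertex set has size $(m+1)(s-q)+d$ with $0\le d\le m$, and apply \cref{lem:blocker} to the $(m+1)$-sets of $\F$ inside it. The only difference is that the paper uses minimality to get the sharper bound $q\le\ell$. This gives $t\ge(1-\lambda)s$ and $\frac1t\binom{(m+1)t+d}{m+1}\ge t^m$, hence $c_0=(1-\lambda)^m$ directly, whereas your bound $q\le\ell+m$ costs only a constant factor and a largeness condition on $s$.
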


\begin{proof}
    Choose an inclusion-minimal subfamily, still denoted by
    \(\mathcal Q=\{Q_1,\ldots,Q_q\}\), with \(\Delta(\mathcal Q)\ge \ell\).  Since the
    chosen sets have size at most \(m\), each deficit lies between \(1\) and
    \(m+1\).  Minimality gives \(\ell\le \Delta(\mathcal Q)\le \ell+m\).  Moreover,
    each \(Q_i\) has deficit at least one.  If \(q>\ell\), then after removing
    any one of the \(Q_i\) the remaining family has deficit at least
    \(q-1\ge\ell\), contradicting minimality.  Hence \(q\le\ell\).  Write
    \(\Delta(\mathcal Q)=\ell+d\), where \(0\le d\le m\).
    This choice ensures that the selected low-layer sets use exactly
    \((m+1)q-\Delta(\mathcal Q)\) vertices, so the remaining vertex set has the cardinality
    required by the blocker lemma.

    The sets \(Q_1,\ldots,Q_q\) use
    \(\sum_{i=1}^q |Q_i|=(m+1)q-\Delta(\mathcal Q)\)
    vertices.  Let \(W\) be the remaining vertex set.  Then
    \(|W|=n-\bigl((m+1)q-\Delta(\mathcal Q)\bigr) =(m+1)(s-q)+d.\)
    Put \(t\coloneqq s-q\).  Since \(q\le\ell\le\lambda s\),
    \(t=s-q\ge s-\ell\ge(1-\lambda)s.\)

    If \(\F_{m+1}[W]\) contained \(t\) pairwise disjoint \((m+1)\)-sets,
    these sets together with \(Q_1,\ldots,Q_q\) would form an \(s\)-matching in
    \(\F\).  Hence \(\nu(\F_{m+1}[W])<t\).  Since \(|W|=(m+1)t+d\),
    \cref{lem:blocker} gives
    \[
        |\Y_{m+1}|
        \ge
        \frac1t\binom{(m+1)t+d}{m+1}
        \ge t^m
        \ge (1-\lambda)^m s^m.
    \]
    The lemma holds with \(c_0=(1-\lambda)^m\).
\end{proof}

In the proof of \cref{thm:main}, we apply
\cref{lem:deficit-completion} with \(\lambda=\gamma\).  In the proof of
\cref{thm:m3-lower-range}, we apply it with a fixed \(\lambda\) satisfying
\(\alpha_*<\lambda<1\), where \(\alpha_*=(\sqrt{321}-9)/10\approx 0.89\).

\section{Proof of Theorem~\ref{thm:main}}\label{sec:proof-of-main-result-1}

We use the notation from \cref{sec:comparison}.
We first reduce to the case \(\emptyset\notin\F\), and hence
\(|\F_0|=0\).  Suppose that \(\emptyset\in\F\).
If all $1$-sets belonged to \(\F\), then \(\F\) would contain an $s$-matching, a contradiction.
Hence there exists \(x\in[n]\) with \(\{x\}\notin\F\).  Replace \(\emptyset\) by \(\{x\}\), and put
\(\F'\coloneqq(\F\setminus\{\emptyset\})\cup\{\{x\}\}.\)
Then \(|\F'|=|\F|\).  Moreover, \(\nu(\F')<s\): if an \(s\)-matching in
\(\F'\) does not use \(\{x\}\), then it is already an \(s\)-matching in
\(\F\); if it uses \(\{x\}\), then replacing \(\{x\}\) by \(\emptyset\) gives
an \(s\)-matching in \(\F\).  Thus it suffices to prove the theorem for
families with \(\emptyset\notin\F\).

Put \(M \coloneqq \Y_m=\binom{[n]}m\setminus \F_m.\)
Thus $M$ is the family of missing $m$-sets.
By Lemma~\ref{lem:low-layer-comparison}, it remains to prove
\begin{equation}\label{eq:case-target}
    |\F_{\le m}|\le \Lambda+|\Y_{\ge m+1}|.
\end{equation}

We prove \eqref{eq:case-target} by analyzing the \(m\)-uniform layer
\(\F_m\).
We split according to the matching number and the vertex-cover number of $\F_m$:
\[
    \begin{array}{ll}
        \text{Case I:}   & \nu(\F_m)\le a\text{ and }\tau(\F_m)\le a, \\[1mm]
        \text{Case II:}  & \nu(\F_m)\le a<\tau(\F_m),                 \\[1mm]
        \text{Case III:} & \nu(\F_m)\ge a+1=\ell.
    \end{array}
\]

\subsection{Proof of Case I}
In this case, $\nu(\F_m)\le a$ and $\tau(\F_m)\le a$.
Let $A$ be a vertex cover of $\F_m$ with $|A|=a$.
Put $R=[n]\setminus A$ and let $|R|=r$.
Since $A$ meets every $m$-set in $\F_m$, we have $\binom Rm\subseteq M$.
This is exactly the missing $m$-layer of the canonical copy
$P(m,s,\ell;A)$.
For each $0\le j\le m-1$, we say that a $j$-set $E$ is a \emph{canonical
    $j$-set} if
\(|E\cap A|\ge m+1-j.\)
Equivalently, $E$ belongs to the $j$-th layer of $P(m,s,\ell;A)$.

For $E\subseteq[n]$, recall that $M(\overline E)=\{Q\in M:Q\cap E=\emptyset\}$ is the subfamily of missing $m$-sets disjoint from $E$.
For $0\le j\le m-1$, define
\begin{equation}\label{eq:Bj-def-new}
    B_j(M) \coloneqq
    \left\{E\in\binom{[n]}j:
    |M(\overline E)|\ge\binom{r+m+1-2j}{m}
    \right\}.
\end{equation}
By definition, a \(j\)-set lies in \(B_j(M)\) exactly when the number of missing \(m\)-sets disjoint from it is at least the threshold in
\eqref{eq:Bj-def-new}.
If $E\in\F_j\cap B_j(M)$, we shall say that $E$ is \emph{bad}.
If
$E\in\F_j\setminus B_j(M)$, we shall say that $E$ is \emph{good}.

We first count the number of sets in $B_j(M)$.

\begin{claim}\label[claim]{cl:counting-from-missing}
    We have
    \begin{equation}\label{eq:counting-from-missing}
        \sum_{j=0}^{m-1}|B_j(M)|
        \le
        \sum_{j=0}^{m-1}\Lambda_j(a,r)+|M|-\binom rm.
    \end{equation}
    Moreover, if equality holds, then $M=\binom Rm$ and for $0\le j\le m-1$,
    \[
        B_j(M)=\left\{E\in\binom{[n]}j: |E\cap A|\ge m+1-j\right\}.
    \]
\end{claim}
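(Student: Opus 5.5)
The approach is to split the missing $m$-sets into the forced part and the excess, and then bound the non-canonical members of $B_j(M)$ by weighted double counting against the excess. Since $A$ is a vertex cover of $\F_m$, we have $\binom Rm\subseteq M$. Write $M'\coloneqq M\setminus\binom Rm$, the missing $m$-sets meeting $A$, so that $|M|-\binom rm=|M'|$. For a $j$-set $E$ with $i\coloneqq|E\cap A|$,
\[
|M(\overline E)|=\binom{r-(j-i)}{m}+|M'(\overline E)|.
\]

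\emph{Step 1: canonical sets lie in $B_j(M)$.} The first term alone reaches the threshold $\binom{r+m+1-2j}{m}$ if and only if $r-j+i\ge r+m+1-2j$, that is, $i\ge m+1-j$. Hence every canonical $j$-set lies in $B_j(M)$, and there are exactly $\Lambda_j(a,r)$ of them.

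\emph{Step 2: a lower bound for non-canonical sets.} A non-canonical $E\in B_j(M)$ with $i=|E\cap A|\le m-j$ must satisfy
\[
|M'(\overline E)|\ge D_{j,i}\coloneqq\binom{r+m+1-2j}{m}-\binom{r-j+i}{m}\ge \binom{r+m-2j}{m-1}.
\]
So it suffices to show that the number of non-canonical sets in $\bigcup_j B_j(M)$ is at most $|M'|$, with strict inequality unless $M'=\emptyset$.

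\emph{Step 3: double counting.} Each $Q\in M'$ is disjoint from at most $N_{j,i}\coloneqq\binom ai\binom r{j-i}$ of the $j$-sets $E$ with $|E\cap A|=i$. Summing $|M'(\overline E)|\ge D_{j,i}$ over the bad non-canonical $E$ of type $(j,i)$ gives
\[
\#\{\text{non-canonical sets in }B_j(M)\}\le |M'|\sum_{j=0}^{m-1}\sum_{i\le m-j}\frac{N_{j,i}}{D_{j,i}}.
\]
It remains to show that the double sum is at most some $c<1$ for $s\ge s_0$.

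\emph{Step 4: the easy layers.} For $j\le m-2$ we have $N_{j,i}=O(s^{j})$ while $D_{j,i}=\Omega(s^{m-1})$. The whole contribution of these layers is therefore $O(1/s)$. This uses $r\ge(m+1-2\gamma)s-O(1)=\Theta(s)$ and $a\le \gamma s$.

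\emph{Step 5: the top layer $j=m-1$ (main obstacle).} Here the crude bound $\binom{n-m}{m-1}/\binom{r-m+2}{m-1}$ exceeds $1$, so one must use the types $i$ separately. Non-canonical means $i\le1$.
\begin{itemize}
\item For $i=0$, we have $D_{m-1,0}=\binom{r-m+3}{m}-\binom{r-m+1}{m}\approx 2\binom{r}{m-1}$, and $N_{m-1,0}\le\binom r{m-1}$. The ratio is about $1/2$.
\item For $i=1$, we have $D_{m-1,1}=\binom{r-m+2}{m-1}\approx\binom r{m-1}$, and $N_{m-1,1}\le a\binom r{m-2}$. The ratio is about $(m-1)a/r$.
\end{itemize}
Writing $a\approx\alpha s$ and $r\approx(m+1-2\alpha)s$, the total is
\[
\frac12+\frac{(m-1)\alpha}{m+1-2\alpha}+O(1/s).
\]
This is less than $1$ precisely when $\alpha<\frac{m+1}{2m}$. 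That holds with room to spare, since $\alpha\le\theta_m-\eps=\frac{m+1}{2m+1}-\eps$. Hence the double sum is at most a constant $c<1$ depending on $m,\eps$, once $s\ge s_0$. The delicate part is controlling the lower-order terms in these binomial approximations uniformly in $\ell$, including small $\ell$ where $a$ may be $0$.

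\emph{Step 6: conclusion and equality.} Summing over $j$ gives
\[
\sum_{j=0}^{m-1}|B_j(M)|\le\sum_{j=0}^{m-1}\Lambda_j(a,r)+c\,|M'|,
\]
which yields \eqref{eq:counting-from-missing}. If equality holds, then $c|M'|\ge|M'|$ forces $M'=\emptyset$, that is, $M=\binom Rm$. With $M'=\emptyset$, Step 1 shows that $E\in B_j(M)$ if and only if $|E\cap A|\ge m+1-j$. This is the claimed description of $B_j(M)$.
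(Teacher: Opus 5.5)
Your proof is correct, but it bounds the non-canonical bad sets differently from the paper.

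The paper groups these sets by shortage $h=m+1-j-|E\cap A|$ and notes that the forced excesses $D_1(r)<\cdots<D_{m+1}(r)$ increase with $h$. It then uses a threshold argument: if $D_h(r)\le\xi<D_{h+1}(r)$, only sets of shortage at most $h$ can be bad, so $|\mathcal X|\le N_h\le D_h(r)-1<\xi$. This needs $N_h<D_h(r)$ only level by level, which reduces to $(m-1)a/r<1$, i.e.\ essentially $\ell<s$.

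You instead add up the contributions of all types, obtaining $|\mathcal X|\le\xi\sum_{j,i}N_{j,i}/D_{j,i}$. Let $\mathcal X_{j,i}$ be the non-canonical members of $B_j(M)$ with $|E\cap A|=i$. Your bound is really just $|\mathcal X_{j,i}|\le N_{j,i}$ together with $\xi\ge D_{j,i}$ whenever $\mathcal X_{j,i}\neq\emptyset$, so no double counting is needed. This is shorter and avoids ordering the thresholds. It also gives the linear bound $|\mathcal X|\le c\,\xi$ with $c<1$, from which the equality case is immediate.

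The cost is that the top-layer types $i=0$ and $i=1$ are charged separately rather than against a common threshold. So you need $\tfrac12+\tfrac{(m-1)\alpha}{m+1-2\alpha}<1$, i.e.\ $\alpha<\tfrac{m+1}{2m}$, instead of $\alpha<1$. That suffices here, since $\alpha\le\theta_m-\eps<\tfrac{m+1}{2m}$. However, the wider range of the threshold argument is what lets the paper run the same mechanism in \cref{cl:m3-terminal-core} for $m=3$ up to $\ell\approx\alpha_*s$. There $\alpha_*\approx0.89>2/3$, and your summed bound would break down.

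Finally, the uniformity you flag as delicate is not a real issue. Since $r\ge(m+1-2\gamma)s$ for every admissible $\ell$, all error terms are $O_{m,\eps}(1/s)$ uniformly, and $a=0$ merely removes the $i=1$ term.
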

\begin{proof}
    Put \(M^+\coloneqq M\setminus\binom Rm\) and
    \(\xi\coloneqq |M^+|=|M|-\binom rm\).
    For \(E\subseteq[n]\), write
    \(M^+(\overline E)=\{Q\in M^+:Q\cap E=\emptyset\}\).

    We first separate the canonical low sets from the non-canonical ones.  Let \(E\in\binom{[n]}j\) for some $j$ with \(0\le j\le m-1\), and write
    \(u=|E\cap A|\), \(v=|E\cap R|\), so \(j=u+v\).
    If \(E\) is canonical, that is, if \(u\ge m+1-j\), then
    \(r-v=r-j+u\ge m+1+r-2j\).
    Since every \(m\)-set contained in \(R\setminus E\) is missing from \(\F_m\), we get
    \[
        |M(\overline E)|\ge\binom{r-v}{m}
        \ge \binom{m+1+r-2j}{m}.
    \]
    Thus every canonical \(j\)-set belongs to \(B_j(M)\), and the number of
    such sets is \(\Lambda_j(a,r)\).

    Let \(\mathcal X\) be the family of non-canonical low sets which lie in
    one of the \(B_j(M)\).  Explicitly,
    \[
        \mathcal X=
        \bigcup_{j=0}^{m-1}
        \left\{
        E\in B_j(M): |E\cap A|<m+1-j
        \right\}.
    \]
    Therefore
    \(\sum_{j=0}^{m-1}|B_j(M)| = \sum_{j=0}^{m-1}\Lambda_j(a,r)+|\mathcal X|.\)
    It remains to prove \(|\mathcal X|\le\xi\).

    For \(E\in\mathcal X\cap\binom{[n]}j\), define its shortage by
    \(h(E)=m+1-j-|E\cap A|\).  Then \(1\le h(E)\le m+1\), and
    \(r+m+1-2j=r-v+h(E)\).  Since \(E\in B_j(M)\), the extra missing
    \(m\)-sets disjoint from \(E\) satisfy
    \[
        |M^+(\overline E)|
        \ge
        \binom{r-v+h(E)}m-\binom{r-v}m.
    \]
    The shortage $h$ determines how many extra missing $m$-sets are needed
    for $E$ to satisfy the defining inequality for $B_j(M)$.  The minimum possible number
    of such extra missing sets is the following quantity.
    For \(1\le h\le m+1\), define
    \begin{equation}\label{eq:case1-dh}
        D_h(r)=\binom{r-m+1+h}{m}-\binom{r-m+1}{m}=h \frac{r^{m-1}}{(m-1)!}+O_m(r^{m-2}).
    \end{equation}
    The function \(x\mapsto\binom{x+h}{m}-\binom xm\) is increasing for \(x\ge m\).
    Hence every \(E\in\mathcal X\) with shortage \(h\) satisfies
    \(\xi=|M^+|\ge |M^+(\overline E)|\ge D_h(r).\)

    We next compare this required number with the total number of possible
    non-canonical sets of shortage at most $h$.  Let \(N_h\) be the number of
    all non-canonical low sets with shortage at most \(h\).  Explicitly,
    \[
        N_h=
        \sum_{j=0}^{m-1}
        \left|
        \left\{
        E\in\binom{[n]}j:
        1\le m+1-j-|E\cap A|\le h
        \right\}
        \right|.
    \]

    For \(h=1\), the condition \(m+1-j-u=1\) gives \(u=m-j\), and hence
    \(j-u=2j-m\).
    Thus the contribution from $j$-layer is $\binom a{m-j}\binom r{2j-m}$.
    This contribution has degree at most \(j\) in \(s\).
    Therefore the only degree \(m-1\) contribution occurs when \(j=m-1\), where \(u=1\) and \(j-u=m-2\), giving $a\binom r{m-2}$.
    All other layers contribute $o(s^{m-1})$.

    For \(2\le h\le m+1\), the degree \(m-1\) contribution again comes only from \(j=m-1\).
    In that layer, $h(E)=2-|E\cap A|$.
    Thus shortage at most \(h\) implies \(|E\cap A|=0\) or \(|E\cap A|=1\), contributing $\binom r{m-1}+a\binom r{m-2}$ to $N_h$ in total.
    The remaining layers contribute $o(s^{m-1})$.

    Hence, we obtain that
    \[
        N_1=\frac{a r^{m-2}}{(m-2)!}+o(s^{m-1}),
        \text{ and }
        N_h=
        \frac{r^{m-1}}{(m-1)!}
        +
        \frac{a r^{m-2}}{(m-2)!}
        +
        o(s^{m-1})
        \text{ for $2\le h\le m+1$.}
    \]
    By \eqref{eq:case1-dh}, since \(a\le\gamma s\), \(r\ge(m+1-2\gamma)s+1\), and
    \[
        \frac{(m-1)a}{r}
        \le
        \frac{(m-1)\gamma}{m+1-2\gamma}+o(1)
        <1,
    \]
    we have, for $1\le h\le m+1$,
    \[
        D_h(r)-N_h
        \ge
        \left(1-\frac{(m-1)\gamma}{m+1-2\gamma}-o(1)\right)
        \frac{r^{m-1}}{(m-1)!}.
    \]
    By the choice of \(s_0\), for $1\le h\le m+1$, we have
    \(N_h\le D_h(r)-1.\)

    The numbers \(D_1(r),\ldots,D_{m+1}(r)\) are strictly increasing, since
    \(D_{h+1}(r)-D_h(r)=\binom{r-m+h+1}{m-1}>0\).  We compare \(\xi\) with
    this sequence.
    If \(\xi<D_1(r)\), then no non-canonical set can belong to any \(B_j(M)\), so \(|\mathcal X|=0\).
    If \(D_h(r)\le\xi<D_{h+1}(r)\) for some \(1\le h<m+1\), then every set in \(\mathcal X\) has shortage at most \(h\), so
    \(|\mathcal X|\le N_h\le D_h(r)-1\le\xi.\)
    Finally, if \(\xi\ge D_{m+1}(r)\), then \(|\mathcal X|\le N_{m+1}\le D_{m+1}(r)-1\le\xi\).
    Thus \(|\mathcal X|\le\xi\), and hence
    \[
        \sum_{j=0}^{m-1}|B_j(M)|
        \le
        \sum_{j=0}^{m-1}\Lambda_j(a,r)+|M|-\binom rm.
    \]
    This proves \eqref{eq:counting-from-missing}.

    It remains to discuss equality.
    If \(\xi>0\), the preceding argument gives \(|\mathcal X|<\xi\).
    Hence equality in \eqref{eq:counting-from-missing} is impossible unless \(\xi=0\), that is, unless \(M=\binom Rm\).

    Assume now that \(M=\binom Rm\).
    For \(E\in\binom{[n]}j\), again write \(u=|E\cap A|\) and \(v=|E\cap R|\).
    Then \(|M(\overline E)|=\binom{r-v}{m}\).
    Hence \(E\in B_j(M)\) if and only if \(\binom{r-v}{m}\ge\binom{r+m+1-2j}{m}\).
    For \(s_0\) sufficiently large, this is equivalent to \(r-v\ge r+m+1-2j\), which is equivalent to \(u\ge m+1-j\).
    Therefore
    \[
        B_j(M)=
        \left\{
        E\in\binom{[n]}j: |E\cap A|\ge m+1-j
        \right\},
    \]
    as claimed.
\end{proof}

We first consider the case in which there are no good sets.
Then \(\F_j\subseteq B_j(M)\) for every \(0\le j\le m-1\).  By
Claim~\ref{cl:counting-from-missing},
\[
    \sum_{j=0}^{m-1}|\F_j|
    \le
    \sum_{j=0}^{m-1}\Lambda_j(a,r)+|M|-\binom rm.
\]
Adding $|\F_m|=\binom nm-|M|$ gives
\[
    |\F_{\le m}|
    \le
    \sum_{j=0}^{m-1}\Lambda_j(a,r)+\binom nm-\binom rm
    =
    \Lambda.
\]
Thus \eqref{eq:case-target} holds in this subcase.

It remains to consider the case in which a good set exists.

\begin{claim}\label[claim]{cl:good-forces-high-missing}
    If there is a good set $E\in\F_j$ for some $0\le j\le m-1$, then $|\Y_{m+1}|\ge c_0s^m$ for some $c_0=c_0(m,\varepsilon)$.
\end{claim}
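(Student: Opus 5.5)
The plan is to reduce the claim to \cref{lem:deficit-completion} with $\lambda=\gamma$. To do that, I will build a pairwise disjoint family $\mathcal Q\subseteq\F_{\le m}$ with $\Delta(\mathcal Q)\ge\ell$, consisting of the good set $E$ together with a matching of $m$-sets of $\F_m$ that avoid $E$. The set $E\in\F_j$ has deficit $m+1-j\ge 2$, and each $m$-set contributes deficit $1$. So it suffices to find
\[
    t\coloneqq \ell-(m+1-j)
\]
pairwise disjoint members of $\F_m(\overline E)$. If $t\le 0$, then $\mathcal Q=\{E\}$ already has $\Delta(\mathcal Q)\ge\ell$ and we are done, so assume $t\ge1$.

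Suppose for contradiction that $\nu(\F_m(\overline E))<t$. I will apply \cref{cor:large-ground-emc} with $k=m$ to the family $\F_m(\overline E)\subseteq\binom{X}{m}$, where $X=[n]\setminus E$ and $N=|X|=n-j$. It gives
\[
    |\F_m(\overline E)|\le\binom{N}{m}-\binom{N-t+1}{m}.
\]
Every $m$-subset of $X$ not in $\F_m$ lies in $M(\overline E)$, so this is equivalent to $|M(\overline E)|\ge\binom{N-t+1}{m}$. Now compute
\[
    N-t+1=(m+1)s-\ell-j-\ell+m+1-j+1=r+m+1-2j,
\]
using $r=(m+1)s-2\ell+1$. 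So this is exactly the threshold in \eqref{eq:Bj-def-new}, which would place $E$ in $B_j(M)$ and contradict $E$ being good. Hence $\F_m(\overline E)$ contains a $t$-matching. Together with $E$ this gives the required disjoint $\mathcal Q$, and \cref{lem:deficit-completion} yields $|\Y_{m+1}|\ge c_0 s^m$ with $c_0=(1-\gamma)^m$, which depends only on $m$ and $\eps$.

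The main step to check is the hypothesis $N\ge\bigl(\frac{5m-2}{3}+\rho\bigr)t$ of \cref{cor:large-ground-emc}, together with $N\ge N_0(m,\rho)$. The second condition is automatic, since $N\ge n-m\ge ms$ and $s\ge s_0$. For the first, note that $t\le\ell\le\gamma s$ and $N\ge(m+1)s-\ell-m$. So it suffices that
\[
    (m+1)\ge\Bigl(\frac{5m+1}{3}+\rho\Bigr)\gamma+o(1),
\]
that is, $\gamma<\frac{3(m+1)}{5m+1}$ with some room. Since $\gamma<\theta_m=\frac{m+1}{2m+1}$ and $\frac{3}{5m+1}>\frac{1}{2m+1}$ (equivalently $6m+3>5m+1$), we can choose $\rho=\rho(m,\eps)>0$ small enough that this holds for all large $s$. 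I expect this arithmetic, with its slack and the lower-order terms $j\le m-1$, to be the only delicate point. The case $t\ge1$ is handled uniformly by the corollary, so no separate small-$t$ argument should be needed.
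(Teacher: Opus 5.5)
Your proposal is correct and matches the paper's proof essentially step for step. It uses the same completion size $t=p=\ell+j-m-1$ and the same identity $r+m+1-2j=(n-j)-t+1$, which turns goodness of $E$ into the counting hypothesis of \cref{cor:large-ground-emc}; you argue by contradiction, whereas the paper argues directly. The ratio check is also the same ($\frac{m+1}{\gamma}-1>\frac{5m-2}{3}$), followed by \cref{lem:deficit-completion} with $\lambda=\gamma$.
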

\begin{proof}
    Fix a good set \(E\in\F_j\), where \(0\le j\le m-1\), and put
    \(p\coloneqq \ell+j-m-1.\)
    Since \(\Delta(E)=m+1-j\), we have $p+\Delta(E)=\ell$.
    Thus $p$ is exactly the number of disjoint $m$-sets needed, together with
    $E$, to reach family deficit $\ell$.

    If \(p\le0\), then the deficit of $E$ is at least \(\ell\).
    Hence the one-element family \(\{E\}\) satisfies the hypothesis of
    Lemma~\ref{lem:deficit-completion}, and the desired bound follows.

    Suppose now that \(p>0\).
    Since \(E\) is good, we have
    \(|M(\overline E)|<\binom{r+m+1-2j}{m}.\)
    Using \(r=n-\ell+1\) and \(p=\ell+j-m-1\), we have
    \(r+m+1-2j=(n-j)-p+1.\)
    Therefore the family
    \[
        G\coloneqq H(\overline E)
        =
        \{Q\in H:Q\cap E=\emptyset\}
        \subseteq\binom{[n]\setminus E}{m}
    \]
    satisfies
    \(|G|> \binom{n-j}{m}-\binom{(n-j)-p+1}{m}.\)

    Since $\gamma<\theta_m$ and
    \[
        \frac{m+1}{\theta_m}-1=2m>\frac{5m-2}{3},
    \]
    choose \(\rho_1=\rho_1(m,\varepsilon)>0\) such that
    \(\frac{5m-2}{3}+\rho_1<\frac{m+1}{\gamma}-1.\)
    Then
    \(m+1-\left(\frac{5m-2}{3}+\rho_1+1\right)\gamma>0.\)
    Since \(p\le\ell\), \(j\le m-1\), and \(n=(m+1)s-\ell\), we get
    \[
        \begin{aligned}
            n-j-\left(\frac{5m-2}{3}+\rho_1\right)p
             & \ge
            (m+1)s-\ell-(m-1)
            -\left(\frac{5m-2}{3}+\rho_1\right)\ell \\
             & \ge
            \left(m+1-\left(\frac{5m-2}{3}+\rho_1+1\right)\gamma\right)s
            -(m-1).
        \end{aligned}
    \]
    The coefficient of \(s\) is positive.  Hence, by the choice of \(s_0\), we have
    \[
        n-j\ge \left(\frac{5m-2}{3}+\rho_1\right)p
        \qquad\text{and}\qquad
        n-j\ge N_0(m,\rho_1),
    \]
    where \(N_0(m,\rho_1)\) comes from in Corollary~\ref{cor:large-ground-emc}.
    Hence $G$ contains $p$ pairwise disjoint $m$-sets.
    Together with \(E\), these sets form a pairwise disjoint family
    \(\mathcal Q\subseteq \F_{\le m}\)
    whose deficit is $\Delta(\mathcal Q)=\Delta(E)+p=\ell$.
    Lemma~\ref{lem:deficit-completion} gives $|\Y_{m+1}|\ge c_0s^m$, as required.
\end{proof}

We now finish the proof of this subcase.
By Claim~\ref{cl:good-forces-high-missing} $|\Y_{m+1}|\ge c_0s^m$ for some \(c_0=c_0(m,\varepsilon)>0\).
Since \(n\le(m+1)s\), there is a constant \(C_1=C_1(m)\) such that
\(\sum_{j=0}^{m-1}\binom nj\le C_1s^{m-1}.\)
By the choice of \(s_0\),
\[
    \sum_{j=0}^{m-1}|\F_j|
    \le
    \sum_{j=0}^{m-1}\binom nj
    <
    c_0s^m
    \le
    |\Y_{m+1}|.
\]
Moreover, since \(A\) is a vertex cover of \(\F_m\), every \(m\)-set contained
in \(R\) is missing from \(\F_m\).  Hence
\(|\F_m|\le \binom nm-\binom rm=\Lambda_m(a,r).\)
Therefore
\[
    |\F_{\le m}|
    =
    |\F_m|+\sum_{j=0}^{m-1}|\F_j|
    <
    \Lambda_m(a,r)+|\Y_{m+1}|
    \le
    \Lambda+|\Y_{\ge m+1}|.
\]
This completes Case I.

\subsection{Proof of Case II}

In this case, \(\nu(\F_m)\le a<\tau(\F_m)\).
Here the $m$-layer still has small matching number, but it is not controlled by any $a$-vertex cover.
We therefore apply \cref{cor:VC}, which gives a loss $L_m$.
We next show that the lower layers cannot compensate this loss unless they satisfy the hypothesis of \cref{lem:deficit-completion}.
If \(a=0\), then \(\nu(\F_m)=0\), so \(\F_m=\emptyset\) and hence \(\tau(\F_m)=0\), a contradiction.
Thus \(a\ge1\).

Recall that $a=\ell-1, n=(m+1)s-\ell$ and $r=n-a=(m+1)s-2\ell+1$ and put \(\alpha\coloneqq\ell/s\).
Since \(\alpha\le\gamma<\theta_m=(m+1)/(2m+1)\), we may choose
\(\rho_2=\rho_2(m,\varepsilon)>0\) such that
\(2m+\rho_2<\frac{m+1}{\gamma}-1.\)
As \(a=\ell-1<\ell\), we have
\[
    \frac na
    >
    \frac{(m+1)s-\ell}{\ell}
    =
    \frac{m+1}{\alpha}-1
    \ge
    \frac{m+1}{\gamma}-1
    >
    2m+\rho_2.
\]
Hence
\begin{equation}\label{eq:caseII-ratio-for-VC}
    n\ge(2m+\rho_2)a.
\end{equation}

Let \(N_0=N_0(m,\rho_2)\) be the threshold in Corollary~\ref{cor:VC}.  By the
choice of \(s_0\), we have \(n\ge N_0\).  Since \(m\ge3\), \(a\ge1\), and
\(\tau(\F_m)>a\ge\nu(\F_m)\), Corollary~\ref{cor:VC} gives
\begin{equation}\label{eq:caseII-Fm-clean}
    |\F_m|
    \le
    \binom nm-\binom rm-L_m,
\end{equation}
where
\(L_m\coloneqq \binom{r-m}{m-1}-n^{m-2}.\)
Since \(r=(m+1-2\alpha)s+1\), we have
\begin{equation}\label{eq:caseII-Lm-main-clean}
    L_m
    \ge
    \frac{(m+1-2\alpha)^{m-1}}{(m-1)!}s^{m-1}
    -O_{m,\varepsilon}(s^{m-2}).
\end{equation}
In particular, \(L_m>0\) by the choice of \(s_0\).

There are two possibilities.  Either the low layers themselves contain enough
disjoint deficit to force many missing high-layer sets, or they do not.  The
first possibility is dominated by $|\Y_{m+1}|$; the second forces
the $(m-1)$-layer to have small matching number.

We first consider the case in which there is a pairwise disjoint family
\(\mathcal Q\subseteq \F_0\cup\F_1\cup\cdots\cup\F_{m-1}\)
such that $\Delta(\mathcal Q)\ge\ell$.
By Lemma~\ref{lem:deficit-completion}, $|\Y_{m+1}|\ge c_0s^m$.
On the other hand, since \(n\le(m+1)s\),
\(\sum_{j=0}^{m-1}\binom nj=O_m(s^{m-1}).\)
Thus, by the choice of \(s_0\),
\[
    \sum_{j=0}^{m-1}|\F_j|
    \le
    \sum_{j=0}^{m-1}\binom nj
    <
    c_0s^m
    \le
    |\Y_{m+1}|.
\]
Using \eqref{eq:caseII-Fm-clean} and \(L_m>0\), we obtain
\(|\F_m| < \binom nm-\binom rm = \Lambda_m(a,r).\)
Therefore
\[
    |\F_{\le m}|
    =
    |\F_m|+\sum_{j=0}^{m-1}|\F_j|
    <
    \Lambda_m(a,r)+|\Y_{m+1}|
    \le
    \Lambda+|\Y_{\ge m+1}|.
\]
Thus \eqref{eq:case-target} holds strictly in this subcase.

Hence, assume that there is no pairwise disjoint family
\(\mathcal Q\subseteq \F_0\cup\F_1\cup\cdots\cup\F_{m-1}\)
such that $\Delta(\mathcal Q)\ge \ell$.
We now bound the lower layers.  Put $T\coloneqq \left\lfloor a/2\right\rfloor$.
Since every \((m-1)\)-set has deficit \(2\), the family \(\F_{m-1}\) cannot
contain \(T+1\) pairwise disjoint members.
Hence $\nu(\F_{m-1})\le T$.

If \(a=1\), then \(T=0\), so
\(\nu(\F_{m-1})=0\) and hence \(\F_{m-1}=\emptyset\).  Suppose \(a\ge2\).
Then \(T+1\le a\), and by \eqref{eq:caseII-ratio-for-VC},
\[
    \frac n{T+1}
    \ge
    \frac na
    \ge
    2m+\rho_2
    \ge
    \frac{5(m-1)-2}{3}+\rho_2.
\]
By the choice of \(s_0\), we also have \(n\ge N_0(m-1,\rho_2)\).  Therefore
Corollary~\ref{cor:large-ground-emc} applies to \(\F_{m-1}\) with
\(k=m-1\), \(N=n\), and \(t=T+1\).  Since
\(\nu(\F_{m-1})<T+1\), it gives
\(|\F_{m-1}| \le \binom n{m-1}-\binom{n-T}{m-1}.\)
This bound is also valid when \(a=1\), because then both sides are zero.

Using
\[
    \binom n{m-1}-\binom{n-T}{m-1}
    =
    \sum_{i=0}^{T-1}\binom{n-1-i}{m-2}
    \le
    T\binom n{m-2},
\]
together with
\(T\le \frac a2<\frac{\ell}{2}=\frac{\alpha s}{2}\) and \(n=(m+1-\alpha)s\),
we obtain
\begin{equation}\label{eq:caseII-Fm1-main-clean}
    |\F_{m-1}|
    \le
    \frac{\alpha}{2}
    \frac{(m+1-\alpha)^{m-2}}{(m-2)!}
    s^{m-1}.
\end{equation}

It remains to compare the loss $L_m$ in the $m$-layer with the largest possible contribution from the $(m-1)$-layer.  After
normalizing by $s^{m-1}$, this is exactly the following coefficient
comparison.
Define
\[
    \Phi(\alpha)\coloneqq
    \frac{(m+1-2\alpha)^{m-1}}{(m-1)!}
    -
    \frac{\alpha}{2}
    \frac{(m+1-\alpha)^{m-2}}{(m-2)!}.
\]
Combining \eqref{eq:caseII-Lm-main-clean} and
\eqref{eq:caseII-Fm1-main-clean}, we get
\[
    L_m-|\F_{m-1}|
    \ge
    \Phi(\alpha)s^{m-1}
    -
    O_{m,\varepsilon}(s^{m-2}).
\]
By Lemma~\ref{lem:coefficient-comparison}, there is a constant
\(c_1=c_1(m,\varepsilon)>0\) such that $\Phi(\alpha)\ge c_1$ for all $\alpha\le\gamma$.
Thus, by the choice of \(s_0\), we obtain
\(L_m-|\F_{m-1}| \ge \frac{c_1}{2}s^{m-1}.\)
By the choice of \(s_0\) again,
\(\sum_{j=0}^{m-2}\binom nj < \frac{c_1}{2}s^{m-1}.\)
Therefore
\(L_m> |\F_{m-1}|+\sum_{j=0}^{m-2}\binom nj \ge \sum_{j=0}^{m-1}|\F_j|.\)
Finally, using \eqref{eq:caseII-Fm-clean},
\[
    |\F_{\le m}|
    =
    |\F_m|+\sum_{j=0}^{m-1}|\F_j|  \le
    \binom nm-\binom rm-L_m+\sum_{j=0}^{m-1}|\F_j|<
    \binom nm-\binom rm  =
    \Lambda_m(a,r)
    \le
    \Lambda
    \le
    \Lambda+|\Y_{\ge m+1}|.
\]
Thus \eqref{eq:case-target} holds strictly in Case II.

\subsection{Proof of Case III}

In this case, \(\nu(\F_m)\ge \ell\).
Write $q\coloneqq \nu(\F_m)=\ell+d$ for some \(d\ge0\).
Since \(\nu(\F)<s\), we have \(q\le s-1\).  Put
\(t\coloneqq s-q=s-\ell-d.\)
Choose \(q\) pairwise disjoint \(m\)-sets $Q_1,\ldots,Q_q\in\F_m$.
Let
\(W=[n]\setminus\bigcup_{i=1}^q Q_i.\)
Then
\(|W| = n-mq = (m+1)s-\ell-m(\ell+d) = (m+1)t+d.\)
If \(\F_{m+1}[W]\) contained \(t\) pairwise disjoint \((m+1)\)-sets, then
those sets together with \(Q_1,\ldots,Q_q\) would form an \(s\)-matching in
\(\F\).  Hence
\(\nu(\F_{m+1}[W])<t.\)
Since \(q\le s-1\), we have \(t\ge1\).  Applying
Lemma~\ref{lem:blocker} to
\(G=\F_{m+1}[W]\subseteq\binom W{m+1}\)
gives
\begin{equation}\label{eq:caseIII-blocker-new}
    |\Y_{m+1}|
    \ge
    \max\left\{
    \frac1t\binom{(m+1)t+d}{m+1},
    \binom{d+m+1}{m+1}
    \right\}.
\end{equation}

Let
\(b\coloneqq s-\ell.\)
Then \(t+d=b\).  Since \(\ell\le\gamma s\), we have
\(b\ge(1-\gamma)s.\)
Choose \(C=C(m)\) such that $\sum_{i=0}^{m}\binom ni\le Cs^m$, which is possible because \(n\le(m+1)s\).  Choose
\(K=K(m,\varepsilon)\) sufficiently large so that
\begin{equation}\label{eq:caseIII-K-choice}
    \frac{K^{m+1}(1-\gamma)^m}{(m+1)!}>C.
\end{equation}
We now split according to the excess \(d=q-\ell\).

\medskip
\noindent\textbf{Subcase 1: \(d\le Kb^{m/(m+1)}\).}
\medskip

Since \(b\le s\), this gives
\(d=O_{m,\varepsilon}\bigl(s^{m/(m+1)}\bigr)=o(s).\)
Also
\(q+1=\ell+d+1\le \gamma s+o(s)\) and \(n=(m+1)s-\ell\ge(m+1-\gamma)s\).
Since
\(\frac{m+1-\gamma}{\gamma}>\frac{5m-2}{3},\)
we may choose \(\rho_4=\rho_4(m,\varepsilon)>0\) such that, by the choice of
\(s_0\),
\(n\ge \left(\frac{5m-2}{3}+\rho_4\right)(q+1)\) and \(n\ge N_0(m,\rho_4)\),
where \(N_0(m,\rho_4)\) is the threshold in Corollary~\ref{cor:large-ground-emc}.
Since \(\nu(\F_m)=q\), Corollary~\ref{cor:large-ground-emc} gives
\[
    |\F_m|
    \le
    \binom nm-\binom{n-q}{m}
    =
    \binom nm-\binom{r-1-d}{m}.
\]
Since $\Lambda_m(a,r)=\binom{n}{m}-\binom{r}{m}$, we have
\[
    \begin{aligned}
        |\F_m|-\Lambda_m(a,r)
         & \le
        \binom rm-\binom{r-1-d}{m} \\
         & =
        \sum_{i=0}^{d}\binom{r-1-i}{m-1}
        \le
        (d+1)\binom r{m-1}
        =
        O_{m,\varepsilon}\bigl((d+1)s^{m-1}\bigr).
    \end{aligned}
\]
The layers below \(m\) contribute at most
\(\sum_{j=0}^{m-1}|\F_j| \le \sum_{j=0}^{m-1}\binom nj = O_m(s^{m-1}).\)
Therefore
\begin{equation}\label{eq:caseIII-small-d-excess}
    |\F_{\le m}|-\Lambda
    \le
    O_{m,\varepsilon}\bigl((d+2)s^{m-1}\bigr)
    =
    O_{m,\varepsilon}\bigl(s^{m-1+m/(m+1)}\bigr)
    =
    o(s^m).
\end{equation}

On the other hand, since \(q\ge\ell\), any \(\ell\) of the sets
\(Q_1,\ldots,Q_q\) form a pairwise disjoint subfamily \(\mathcal Q\subseteq\F_m\) with
\(\Delta(\mathcal Q)=\ell\).  Lemma~\ref{lem:deficit-completion} gives
\(|\Y_{m+1}|\ge c_0s^m\)
for some \(c_0=c_0(m,\varepsilon)>0\).  Combining this with
\eqref{eq:caseIII-small-d-excess}, and enlarging \(s_0\) if necessary, we get
\[
    |\F_{\le m}|
    <
    \Lambda+|\Y_{m+1}|
    \le
    \Lambda+|\Y_{\ge m+1}|.
\]
Thus \eqref{eq:case-target} holds strictly in this subcase.

\medskip
\noindent\textbf{Subcase 2: \(d>Kb^{m/(m+1)}\).}
\medskip

In this case, \eqref{eq:caseIII-blocker-new} gives
\[
    |\Y_{m+1}|
    \ge
    \binom{d+m+1}{m+1}
    \ge
    \frac{d^{m+1}}{(m+1)!}.
\]
Since \(d>Kb^{m/(m+1)}\) and \(b\ge(1-\gamma)s\), the choice of \(K\) in
\eqref{eq:caseIII-K-choice} implies
\[
    |\Y_{m+1}|
    >
    \frac{K^{m+1}(1-\gamma)^m}{(m+1)!}s^m
    >
    Cs^m
    \ge
    |\F_{\le m}|.
\]
Hence
\(|\F_{\le m}| < |\Y_{m+1}| \le \Lambda+|\Y_{\ge m+1}|.\)
Thus \eqref{eq:case-target} holds strictly in this subcase.

This completes Case III.

\subsection{Completion of the proof and equality cases}

The three cases prove \eqref{eq:case-target} for every family with
\(\emptyset\notin\F\).  Hence \cref{lem:low-layer-comparison} gives
\(|\F|\le |P(m,s,\ell)|\).  It remains to discuss equality.

Suppose first that \(\emptyset\notin\F\) and
\(|\F|=|P(m,s,\ell)|\).  The good-set subcase of Case I and Cases II and
III are strict.  Hence equality can occur only in the no-good subcase of
Case I.  In that subcase, equality in \eqref{eq:case-target} forces $|\Y_{\ge m+1}|=0$ and $|\F_{\le m}|=\Lambda$.
Moreover equality must hold in Claim~\ref{cl:counting-from-missing}.  Thus
\(M=\binom Rm\), and
\[
    B_j(M)=
    \left\{E\in\binom{[n]}j: |E\cap A|\ge m+1-j\right\}
\]
for every \(0\le j\le m-1\).  Since \(\F_j\subseteq B_j(M)\) and the
total sizes are equal, we have \(\F_j=B_j(M)\) for all
\(0\le j\le m-1\).  Also
\[
    \F_m=\binom{[n]}m\setminus\binom Rm
    =
    \left\{E\in\binom{[n]}m:E\cap A\ne\emptyset\right\}.
\]
Together with \(\F_{\ge m+1}=\binom{[n]}{\ge m+1}\), this gives
\(\F=P(m,s,\ell;A).\)

Finally, if an extremal family originally contained \(\emptyset\), replacing
\(\emptyset\) by a missing singleton gives another extremal family with no
empty set but with a singleton.  This is impossible, since
\(P(m,s,\ell;A)\) contains no singleton for \(m\ge3\).  Hence no extremal
family contains \(\emptyset\), and the equality case follows.

\section{Proof of Theorem~\ref{thm:m3-lower-range}}\label{sec:m3-threshold}

We use the notation from \cref{sec:comparison} with $m=3$, but
we now work in the larger range $1\le \ell\le t(s)$.
The same comparison framework applies; the improvement uses sharper $3$-uniform matching results and better estimates.
Write $n=4s-\ell, a=\ell-1,r=n-a=4s-2\ell+1$, $\Lambda = \binom a2+\binom n3-\binom r3$ and $|P(3,s,\ell)|=\Lambda+\binom n{\ge4}$.

For a family \(\F\subseteq2^{[n]}\), write
\(H\coloneqq \F_3.\)
For \(E\in\F_j\), \(j\le3\), recall that \(\Delta(E)=4-j\), and for a family \(\mathcal Q\), \(\Delta(\mathcal Q)=\sum_{Q\in\mathcal Q}\Delta(Q)\).
We also keep the notation from \cref{sec:comparison}: for example,
\(H(\overline E)=\{Q\in H:Q\cap E=\emptyset\}\) is the family of triples in \(H\) disjoint from \(E\).
Finally, put
\(A_3\coloneqq \binom{3\ell-1}{3}.\)
The number $A_3$ is the size of the third layer of the second candidate
extremal family $P'(s,\ell;L')$.  Thus the comparison between $\Lambda$
and $A_3$ determines which of the two candidates can be extremal.

Recall that
\[
    t(s)
    =
    \frac{17-18s+\sqrt{49-852s+1284s^2}}{20}
    =\alpha_*s+O(1),
    \qquad
    \alpha_*=\frac{\sqrt{321}-9}{10}<1.
\]
The case \(\ell\le s/2\) follows from \cref{thm:main}, applied with
\(m=3\) and \(\eps=1/14\).
Hence, throughout the proof below, we assume
\begin{equation}\label{eq:m3-threshold-range}
    \frac s2<\ell\le t(s).
\end{equation}
Choose a fixed constant \(\lambda\) with \(\alpha_*<\lambda<1\).  Since
\(t(s)=\alpha_*s+O(1)\), for all sufficiently large \(s\) we have
\begin{equation}\label{eq:m3-lambda-range}
    \ell\le \lambda s.
\end{equation}
Since $s$ is sufficiently large, we have $\ell\ge6$.
Hence for \(j\in\{1,2\}\), the quantities \(p=\ell+j-4\) and \(p-1\) are positive.

The goal of this section is to prove \cref{thm:m3-lower-range}.  The proof
first compares the endpoint quantity \(A_3\) with \(\Lambda\), then reduces
the upper bound to the first three layers.  After that reduction, the argument
splits according to the matching number and vertex-cover number of
\(H=\F_3\): Case 1 treats the coverable case \(\tau(H)\le a\); Case 2
treats \(\nu(H)\le a<\tau(H)\) using the 3-uniform stability input; and
Case 3 treats \(\nu(H)\ge a+1\) by forcing enough missing 4-sets.

We first record a lemma concerning numerical estimates.
\begin{lemma}\label[lemma]{lem:m3-endpoint-comparison}
    We have $\Lambda\ge A_3$.
    Moreover, equality holds if and only if \(\ell=t(s)\).
\end{lemma}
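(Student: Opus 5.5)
The plan is to view $f(\ell)\coloneqq\Lambda-A_3$, for fixed $s$, as a polynomial in $\ell$ and to factor it completely. Here
\[
    \Lambda=\binom{\ell-1}{2}+\binom{4s-\ell}{3}-\binom{4s-2\ell+1}{3},
    \qquad
    A_3=\binom{3\ell-1}{3}.
\]
Each term is a polynomial in $\ell$ of degree at most $3$, so $f$ is a cubic. I first compute its leading coefficient. The term $\binom{a}{2}$ is quadratic and does not contribute. The coefficient of $\ell^3$ in $\binom{4s-\ell}{3}$ is $-1/6$, in $-\binom{4s-2\ell+1}{3}$ it is $+8/6$, and in $-\binom{3\ell-1}{3}$ it is $-27/6$. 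Hence the leading coefficient of $f$ is $-20/6=-10/3$.

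Next, $\ell=1$ is a root of $f$. At $\ell=1$ we have $a=0$ and $r=n$, so $\Lambda=0$, and $A_3=\binom 23=0$. This matches the remark in the introduction that the two constructions coincide at $\ell=1$. Therefore
\[
    f(\ell)=(\ell-1)\,g(\ell),
\]
where $g$ is a quadratic in $\ell$ with leading coefficient $-10/3$.

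The main computational step, and the only real obstacle, is to expand $g$ explicitly and check that it is a positive multiple of
\[
    -\bigl(10\ell^2+(18s-17)\ell+c(s)\bigr)
\]
for a suitable constant term $c(s)$. I would do this by polynomial division of the expanded cubic by $\ell-1$. Equivalently, I would compute $f$ at two further convenient values of $\ell$, for instance $\ell=0$ and $\ell=2$, and match coefficients. The goal is that the discriminant of this quadratic comes out as $(18s-17)^2-40c(s)=1284s^2-852s+49$, which is exactly the quantity appearing in the closed form of $t(s)$. The roots of $g$ are then
\[
    t(s)=\frac{17-18s+\sqrt{1284s^2-852s+49}}{20}
    \quad\text{and}\quad
    t'(s)=\frac{17-18s-\sqrt{1284s^2-852s+49}}{20}.
\]
For large $s$ we have $t'(s)<0$, and $t(s)=\alpha_*s+O(1)>1$. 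This is consistent with the definition of $t(s)$ as the root larger than $1$, so once the expansion is verified the identification of roots is automatic.

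Given this factorization,
\[
    f(\ell)=\frac{10}{3}\,(\ell-1)\bigl(t(s)-\ell\bigr)\bigl(\ell-t'(s)\bigr).
\]
For $1<\ell\le t(s)$ the first and third factors are positive and the middle factor is nonnegative, so $f(\ell)\ge0$, that is, $\Lambda\ge A_3$. Equality holds exactly when $\ell=t(s)$, which can happen only if $t(s)$ is an integer. This covers the whole working range \eqref{eq:m3-threshold-range}, since $\ell>s/2>1$ there. The same formula also yields the sign change claimed in the introduction for $\ell>t(s)$.
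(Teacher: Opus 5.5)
Your proposal is correct and matches the paper's proof: the paper also writes $\Lambda-A_3=\frac13(\ell-1)\bigl(-10\ell^2-(18s-17)\ell+24s^2-6s-6\bigr)$, obtained by direct calculation, and reads off the sign from the two roots, discarding the negative one. Your interpolation at $\ell=0,2$ does give $c(s)=-24s^2+6s+6$, so the discriminant is $1284s^2-852s+49$, exactly as you anticipated.
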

\begin{proof}
    By the definitions of \(\Lambda\) and \(A_3\),
    \[
        |P(3,s,\ell)|=\Lambda+\binom n{\ge4}
        \quad\text{and}\quad
        |P'(s,\ell)|=A_3+\binom n{\ge4}.
    \]
    Hence
    \(\Lambda-A_3=|P(3,s,\ell)|-|P'(s,\ell)|\).
    A direct calculation gives
    \[
        \Lambda-A_3
        =
        \frac{(\ell-1)\bigl(-10\ell^2-18s\ell+17\ell+24s^2-6s-6\bigr)}{3}.
    \]
    The quadratic factor has roots
    \[
        \frac{17-18s-\sqrt{49-852s+1284s^2}}{20}
        \quad\text{and}\quad
        t(s)=\frac{17-18s+\sqrt{49-852s+1284s^2}}{20}.
    \]
    The first root is negative for all sufficiently large \(s\).
    Since \eqref{eq:m3-threshold-range} gives \(1<\ell\le t(s)\),
    the quadratic factor is nonnegative, with equality if and only if
    \(\ell=t(s)\).  As \(\ell-1>0\), the claim follows.
\end{proof}

We now reduce the proof to the first three layers.  Since all sets of size at
least $4$ are present in the canonical family, the only possible surplus of
\(\F\) again lies in the first three layers.
As in the proof of \cref{thm:main}, it suffices for the upper bound to
consider families with \(\emptyset\notin\F\).  We shall prove
\begin{equation}\label{eq:m3-key-low}
    |\F_1|+|\F_2|+|H|
    \le
    \Lambda+|\Y_{\ge4}|.
\end{equation}
Indeed, \eqref{eq:m3-key-low} gives
\[
    \begin{aligned}
        |\F|=
        |\F_1|+|\F_2|+|H|+\binom n{\ge4}-|\Y_{\ge4}| \le
        \Lambda+\binom n{\ge4}
        =
        |P(3,s,\ell)|.
    \end{aligned}
\]

We prove \eqref{eq:m3-key-low} by splitting according to the structure of  \(H\).
\[
    \begin{array}{ll}
        \text{Case 1:} & \tau(H)\le a,        \\[1mm]
        \text{Case 2:} & \nu(H)\le a<\tau(H), \\[1mm]
        \text{Case 3:} & \nu(H)\ge a+1=\ell .
    \end{array}
\]

\subsection{Proof of Case 1}

Let \(A\) be a vertex cover of \(H\) with \(|A|=a\).  Put
\(R=[n]\setminus A\) and \(M=\binom{[n]}3\setminus H\).
Since \(A\) covers \(H\), every triple contained in \(R\) is missing from
\(H\); that is,
\(\binom R3\subseteq M.\)
As in Case I of the general proof, we classify low-layer sets according to
whether they can be accounted for by missing triples.  The thresholds below are
the $m=3$ specialization of the general bad-set test, simplified for
singletons and pairs.  Recall that \(M(\overline E)\) denotes the missing triples disjoint from \(E\).
For \(j=1,2\), define
\[
    B_1(M)
    \coloneqq
    \left\{x\in[n]: |M(\overline{\{x\}})|\ge \binom{r+2}{3}\right\},
\]
and
\[
    B_2(M)
    \coloneqq
    \left\{E\in\binom{[n]}2: |M(\overline E)|\ge \binom r3\right\}.
\]
For \(j=1,2\) and \(E\in\F_j\), call \(E\) \emph{bad} if
\(E\in B_j(M)\), and \emph{good} otherwise.

We first record the counting estimate for $B_i(M)$.
Its proof is very similar to that of \cref{cl:counting-from-missing}, but requires a more accurate count.

\begin{claim}\label[claim]{cl:m3-terminal-core}
    We have
    \[
        |B_1(M)|+|B_2(M)|
        \le
        \binom a2+|M|-\binom r3.
    \]
    Moreover, equality implies $M=\binom R3$, $B_1(M)=\emptyset$ and $B_2(M)=\binom A2$.
\end{claim}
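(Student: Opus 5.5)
We follow the proof of \cref{cl:counting-from-missing} with $m=3$, but count exactly rather than asymptotically, since $\ell$ may now be as large as $\lambda s$ with $\lambda$ close to $0.89$. Put $M^+\coloneqq M\setminus\binom R3$ and $\xi\coloneqq|M^+|=|M|-\binom r3$.

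\textbf{Canonical sets.} First, $\binom A2\subseteq B_2(M)$. Indeed, for $E\in\binom A2$ every triple in $R$ is missing and disjoint from $E$, so $|M(\overline E)|\ge\binom r3$. These account for the term $\binom a2$. Every other candidate set is non-canonical, and we write $\mathcal X$ for the set of all non-canonical members of $B_1(M)\cup B_2(M)$. The goal is $|\mathcal X|\le\xi$, with strict inequality whenever $\xi>0$.

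\textbf{Cost of each non-canonical set.} For each non-canonical set we compute how many triples of $M^+$ it forces.
\begin{itemize}
\item A pair $E$ with $|E\cap A|=1$ has $|E\cap R|=1$. It needs $|M^+(\overline E)|\ge\binom r3-\binom{r-1}3=\binom{r-1}2$.
\item A pair $E\subseteq R$ needs $|M^+(\overline E)|\ge\binom r3-\binom{r-2}3$, which is roughly $2\binom{r}{2}$.
\item A singleton in $A$ needs $\binom{r+2}3-\binom r3$, also roughly $2\binom r2$.
\item A singleton in $R$ needs $\binom{r+2}3-\binom{r-1}3$, roughly $3\binom r2$.
\end{itemize}
Hence every element of $\mathcal X$ forces $\xi\ge D_1\coloneqq\binom{r-1}2$.

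\textbf{Threshold comparison.} As in the general claim, we split on the value of $\xi$.

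If $\xi<D_1$, then $\mathcal X=\emptyset$ and we are done.

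If $D_1\le\xi<D_2$, where $D_2=\binom r3-\binom{r-2}3$ is the cost of a pair inside $R$, then only mixed pairs can lie in $\mathcal X$. So $|\mathcal X|\le a(r-a')\le ar$, and we need $ar<\binom{r-1}2$. With $a<\lambda s$ and $r=4s-2\ell+1>(4-2\lambda)s$, this reduces to $\lambda<(4-2\lambda)/2$, i.e. $\lambda<1$. That holds, with room to spare for large $s$.

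If $\xi\ge D_2$, the crude bound $|\mathcal X|\le n+ar+\binom r2$ would require $ar+\binom r2<D_2\approx r^2$, i.e. $a<r/2$ to leading order. This gives $\lambda<2-\lambda$, again true for $\lambda<1$. The lower-order terms, namely the $n$ singletons and the difference $\binom{r-1}2$ versus $\binom r2$, are absorbed because the leading-order gap is a positive constant times $s^2$. If the crude count were ever too tight, we would refine it. Each further level of $\xi$ unlocks at most $n$ new singletons, while the cost increases by about $\binom r2\gg n$, so the refined staircase comparison goes through.

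In every case $|\mathcal X|\le\xi$, which proves the inequality.

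\textbf{Equality.} Each comparison above has slack, so $\xi>0$ forces $|\mathcal X|<\xi$. Equality therefore forces $\xi=0$, i.e. $M=\binom R3$. Then $|M(\overline E)|=\binom{r-|E\cap R|}3$ exactly. This quantity is at least $\binom r3$ only for pairs inside $A$, and never reaches $\binom{r+2}3$ for a singleton. Hence $B_1(M)=\emptyset$ and $B_2(M)=\binom A2$.

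\textbf{Main obstacle.} The main difficulty is the middle regime, where mixed pairs $E\subseteq A\times R$, numbering about $ar$, compete against the cost $\binom{r-1}2$. Only there does the bound $\lambda<1$ get used in a genuinely non-trivial way, and the lower-order terms must be checked carefully for $\ell$ near $\lambda s$.
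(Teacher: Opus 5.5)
Your argument is correct and follows the paper's threshold comparison with the same four cost values $\binom{r-1}{2}$, $(r-2)^2$, $r^2$ and $\tfrac{3r^2-3r+2}{2}$. The one difference is that you collapse the top three levels of the staircase: the slack $D_2-\bigl(ar+\binom r2\bigr)=2r(s-\ell-1)+4=\Theta(s^2)$ swallows the $n=O(s)$ singletons. The paper instead checks $N_i\le D_i-1$ separately at all four levels, exactly, by minimizing over $r$ at the endpoint $r=2a+7$.

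When writing it up, replace the ``roughly'' estimates by these exact values. In particular, the fact that only mixed pairs appear in the middle regime needs $(r-2)^2<r^2<\tfrac{3r^2-3r+2}{2}$. Also drop the undefined $a'$.
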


\begin{proof}
    Put
    \(M^+\coloneqq M\setminus\binom R3\) and \(\xi\coloneqq |M^+|=|M|-\binom r3\).
    Thus, for \(T\subseteq[n]\),
    \(M^+(\overline T)=\{Q\in M^+:Q\cap T=\emptyset\}.\)

    If \(E\in\binom A2\), then all triples in \(\binom R3\) are disjoint
    from \(E\).  Hence \(E\in B_2(M)\).  These sets contribute exactly
    \(\binom a2\) elements to \(B_2(M)\).

    Put
    \(\mathcal U \coloneqq \left(B_2(M)\setminus\binom A2\right)\cup B_1(M).\)
    We prove that \(|\mathcal U|\le\xi\).  Decompose \(\mathcal U\) into
    the four classes
    \[
        \begin{aligned}
             & \mathcal U_1\coloneqq B_2(M)\cap\{E\in\binom{[n]}2: |E\cap A|=1\}, \qquad
            \mathcal U_2\coloneqq B_2(M)\cap\binom R2,                                   \\
             & \mathcal U_3\coloneqq B_1(M)\cap A, \qquad  \mathcal U_4
            \coloneqq B_1(M)\cap R .
        \end{aligned}
    \]
    Clearly, $|\mathcal U_1|\le ar$, $|\mathcal U_2|\le\binom r2$, $|\mathcal U_3|\le a$ and $|\mathcal U_4|\le r$.
    Moreover, similar to the proof of \cref{cl:counting-from-missing}, we have
    \begin{itemize}
        \item if \(E\in\mathcal U_1\), then $\xi\ge |M^+(\overline E)|\ge\binom r3-\binom{r-1}3=\binom{r-1}{2}=D_1$;
        \item if \(E\in\mathcal U_2\), then $\xi\ge |M^+(\overline E)|\ge\binom r3-\binom{r-2}3=(r-2)^2=D_2$;
        \item if \(x\in\mathcal U_3\), then $\xi\ge |M^+(\overline{\{x\}})|
                  \ge \binom{r+2}3-\binom r3=r^2=D_3$;
        \item if \(x\in\mathcal U_4\), then $\xi\ge |M^+(\overline{\{x\}})|
                  \ge\binom{r+2}3-\binom{r-1}3=\frac{3r^2-3r+2}{2}=D_4$.
    \end{itemize}
    For \(i=1,2,3,4\), let $N_i$ be the total number of possible elements in \(\mathcal U_1\cup\cdots\cup\mathcal U_i\), that is,
    \[
        N_1=ar,
        \qquad
        N_2=ar+\binom r2,
        \qquad
        N_3=ar+\binom r2+a,
        \qquad
        N_4=ar+\binom r2+a+r.
    \]

    Since \(r-2a=4s-4\ell+3\) and \(\ell\le t(s)<s\), we have
    \(r\ge2a+7\) for all sufficiently large \(s\).
    We have
    \[
        D_2-D_1=\frac{(r-3)(r-2)}2,
        \qquad
        D_3-D_2=4(r-1),
        \qquad
        D_4-D_3=\frac{(r-2)(r-1)}2,
    \]
    which are positive for \(r\ge4\).
    Therefore, we obtain $D_1<D_2<D_3<D_4$.
    Also, for fixed \(a\), increasing
    \(r\) by one changes the four differences \(D_i-N_i\) by $r-a-1,r-a-3,r-a+1,2r-a-1$, respectively.
    These quantities are positive when \(r\ge2a+7\), so each
    \(D_i-N_i\) is minimized at \(r=2a+7\).  At that endpoint,
    \[
        D_1-N_1=4a+15,
        \qquad
        D_2-N_2=4,
        \qquad
        D_3-N_3=7a+28,
        \qquad
        D_4-N_4=2a^2+16a+36.
    \]
    Hence
    \(N_i\le D_i-1\) for \(i=1,2,3,4\).

    Now compare \(\xi\) with \(D_1<D_2<D_3<D_4\).  If \(\xi<D_1\), then
    all four families \(\mathcal U_i\) are empty.  If
    \(D_i\le\xi<D_{i+1}\) for some \(1\le i<4\), then only the first \(i\)
    families can be nonempty, and hence
    \(|\mathcal U|\le N_i\le D_i-1\le\xi.\)
    If \(\xi\ge D_4\), then
    \(|\mathcal U|\le N_4\le D_4-1\le\xi.\)
    Therefore \(|\mathcal U|\le\xi\), and so
    \[
        |B_1(M)|+|B_2(M)|
        \le
        \binom a2+\xi
        =
        \binom a2+|M|-\binom r3.
    \]

    The same threshold argument gives \(|\mathcal U|<\xi\) whenever
    \(\xi>0\).  Hence equality is possible only when \(\xi=0\), that is,
    when \(M=\binom R3\).  In this case the definitions give
    \(B_1(M)=\emptyset\) and \(B_2(M)=\binom A2\).
\end{proof}

We now prove \eqref{eq:m3-key-low} in Case 1.  First suppose that there are
no good sets.  Then
\(\F_1\subseteq B_1(M)\) and \(\F_2\subseteq B_2(M)\).
By \cref{cl:m3-terminal-core},
\(|\F_1|+|\F_2| \le \binom a2+|M|-\binom r3.\)
Adding \(|H|=\binom n3-|M|\) gives
\[
    |\F_1|+|\F_2|+|H|
    \le
    \binom a2+\binom n3-\binom r3
    =
    \Lambda.
\]
Thus \eqref{eq:m3-key-low} holds.

It remains to consider the case in which a good set exists.
Fix a good set \(E\in\F_j\), where \(j\in\{1,2\}\), and put
\(p\coloneqq \ell+j-4\) and \(N\coloneqq n-j\).
Since \(E\) is good,
\(|M(\overline E)|<\binom{r+4-2j}{3}.\)
Using \(r+4-2j=(n-j)-p+1\), and recalling that \(H(\overline E)\) denotes the triples in \(H\) disjoint from \(E\), we get
\begin{equation}\label{eq:m3-good-HbarE-lower}
    |H(\overline E)|
    >
    \binom N3-\binom{N-p+1}3 = \max\left\{\binom N3-\binom{N-p+1}3,\binom{{3p-1}}{3}\right\},
\end{equation}
where the last equality follows from \cref{lem:m3-good-emc-dominance}.
By Theorem~\ref{thm:m3-EMC3}, \(H(\overline E)\)
contains \(p\) pairwise disjoint triples.

Together with \(E\), these triples form a pairwise disjoint family
\(\mathcal Q\subseteq\F_1\cup\F_2\cup H\) whose deficit is
\(\Delta(\mathcal Q)=p+(4-j)=\ell.\)
By \cref{lem:deficit-completion} and \eqref{eq:m3-lambda-range},
\(|\Y_4|\ge c_0s^3\) for some constant \(c_0>0\).  Since
\(|\F_1|+|\F_2| \le n+\binom n2 =O(s^2),\)
we have \(|\F_1|+|\F_2|<|\Y_4|\) for all sufficiently large \(s\).  Also,
since \(A\) is a vertex cover of \(H\),
\(|H|\le \binom n3-\binom r3.\)
Therefore
\[
    |\F_1|+|\F_2|+|H|
    <
    \binom n3-\binom r3+|\Y_4|
    \le
    \Lambda+|\Y_{\ge4}|.
\]
Thus \eqref{eq:m3-key-low} holds strictly in this subcase.  This completes
Case 1.

\subsection{Proof of Case 2}

In this case, it follows from \(\ell\le t(s)<s\) that \(n-(3a+2) = 4s-\ell-(3\ell-1) = 4(s-\ell)+1>0\).
Hence \cref{thm:m3-GLM} gives
\[
    |H|
    \le
    \max\left\{
    h_3(n,a),\binom{3a+2}{3}
    \right\},
\]
where
\(h_3(n,a)= \binom n3-\binom{n-a}{3}+1-\binom{n-a-3}{2}.\)
Since \(3a+2=3\ell-1\), the second term is \(A_3\).

Since $\nu(\F)<s$, we have \(|\F_1|\le s-1\) and \(\nu(\F_2)<s\).
Since $n=4s-\ell>3s$, applying \cref{lem:EG} gives
\[
    |\F_2|
    \le
    \max\left\{\binom{2s-1}{2},\binom n2-\binom{n-s+1}{2}\right\}\le
    \binom n2-\binom{n-s+1}{2}
    =
    \frac{(s-1)(2n-s)}2,
\]
where the last inequality follows from \(n=4s-\ell>3s\).
Put
\(L_{12}\coloneqq (s-1)+\frac{(s-1)(2n-s)}2.\)
Thus
\begin{equation}\label{eq:m3-F12-L12}
    |\F_1|+|\F_2|\le L_{12}.
\end{equation}

We now split according to which term in the maximum controls \(|H|\).

\medskip
\noindent\textbf{Subcase a: \(|H|\le h_3(n,a)\).}
\medskip

By \cref{lem:m3-h3-gap},
\(\Lambda-h_3(n,a)>L_{12}.\)
Together with \eqref{eq:m3-F12-L12}, this gives
\(|\F_1|+|\F_2|+|H| < \Lambda.\)
Thus \eqref{eq:m3-key-low} holds strictly in Subcase a.

\medskip
\noindent\textbf{Subcase b: \(|H|\le A_3\).}
\medskip

Put
\(\delta\coloneqq A_3-|H|\ge0\) and \(g_0\coloneqq \Lambda-A_3\ge0\).
If \(\F_1\cup\F_2=\emptyset\), then $|\F_1|+|\F_2|+|H|=|H|\le A \le\Lambda$.
Thus \eqref{eq:m3-key-low} holds in this subcase.
Equality here can occur
only when \(\ell=t(s)\) and \(|H|=A_3\).
Hence, for the rest of Subcase b,
we assume that \(E\in\F_j\) for some \(j\in\{1,2\}\).

Write $p\coloneqq \ell+j-4$ and $N\coloneqq n-j$.
For \(i\in\{1,2\}\), write $p_i\coloneqq \ell+i-4$, $N_i\coloneqq n-i$ and $L_i\coloneqq \binom n3-\binom{n-i}{3}$.
Thus \(p=p_j\) and \(N=N_j\), and \(L_j\) is the number of triples meeting a fixed \(j\)-set.

\begin{lemma}\label[lemma]{lem:m3-case2-numerical}
    For all sufficiently large \(s\) and every \(i\in\{1,2\}\), we have
    \[
        \Lambda-L_i-h_3(N_i,p_i-1)-L_{12}>0 \qquad\text{and}\qquad \Lambda-L_i-\binom{3p_i-1}{3}-L_{12}>0.
    \]
\end{lemma}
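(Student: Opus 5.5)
Both inequalities are explicit polynomial inequalities in $s$ and $\ell$. Throughout I write $\ell=\alpha s$ with $\frac12<\alpha\le t(s)/s=\alpha_*+O(1/s)$, as in \eqref{eq:m3-threshold-range}, and I expand everything to order $s^2$ with $O(s)$ error terms. The plan is to first use exact binomial identities to cancel everything that agrees at order $s^3$, and only then pass to asymptotics.

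\emph{First inequality.} I would use $\binom n3-L_i=\binom{n-i}3=\binom{N_i}3$ exactly. I would also use $N_i-(p_i-1)=n-\ell+5-2i=r+4-2i$. Substituting into $h_3(N_i,p_i-1)=\binom{N_i}3-\binom{N_i-p_i+1}3+1-\binom{N_i-p_i-2}2$ gives the exact identity
\[
\Lambda-L_i-h_3(N_i,p_i-1)-L_{12}=\binom a2+\left[\binom{r+4-2i}3-\binom r3\right]+\binom{r+1-2i}2-1-L_{12}.
\]
The bracket is nonnegative for $i\in\{1,2\}$, and for $i=2$ it is zero. I would therefore only need to treat $i=2$, since the $i=1$ expression dominates it up to lower-order terms. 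With $a\sim\alpha s$, $r\sim(4-2\alpha)s$ and $L_{12}\sim\frac{(7-2\alpha)}2s^2$, the $s^2$-coefficient is
\[
\tfrac12\bigl[\alpha^2+(4-2\alpha)^2-(7-2\alpha)\bigr]=\tfrac12(5\alpha-9)(\alpha-1).
\]
This is bounded below by a positive constant for $\alpha\le\lambda<1$, so the $O(s)$ errors are absorbed once $s$ is large.

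\emph{Second inequality.} Here the cubic terms do not cancel identically. Instead I write
\[
\Lambda-L_i-\binom{3p_i-1}3-L_{12}=(\Lambda-A_3)+\left[A_3-\binom{3p_i-1}3\right]-L_i-L_{12}.
\]
By \cref{lem:m3-endpoint-comparison}, $\Lambda-A_3=\frac{(\ell-1)Q(\ell)}{3}\ge0$, where $Q$ is the quadratic factor, which is nonnegative on our range. Since $3p_i-1=3\ell-1-3(4-i)$, the bracket is $\sim\frac{(12-3i)\cdot 9\alpha^2}{2}s^2$. Also $L_i\sim\frac{i(4-\alpha)^2}{2}s^2$.

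I would then split the range with a small fixed $\eta>0$.
\begin{enumerate}
\item If $\alpha\le\alpha_*-\eta$, then $Q(\ell)\ge c(\eta)s^2$, so $\Lambda-A_3\ge c's^3$. This dominates all the $O(s^2)$ negative terms.
\item If $\alpha\in[\alpha_*-\eta,\alpha_*+O(1/s)]$, I drop $\Lambda-A_3\ge0$ and check the $s^2$-coefficient
\[
\tfrac12\bigl[9(12-3i)\alpha^2-i(4-\alpha)^2-(7-2\alpha)\bigr].
\]
The worst case is $i=2$. At $\alpha_*\approx0.89$ this is roughly $\frac12(42.9-19.2-5.2)>0$, so by continuity it stays positive on a neighbourhood of $\alpha_*$ for small $\eta$.
\end{enumerate}

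The main obstacle is this second inequality. Its positivity is only of order $s^2$ near the threshold $t(s)$, and it genuinely fails at order $s^2$ near $\alpha=\frac12$, where it is rescued only by the $s^3$ term $\Lambda-A_3$. So the two-regime split, with $\eta$ chosen before $s$, is essential. One must also check carefully that the $O(s)$ errors in both expansions are uniform in $\alpha$ over the whole compact range.
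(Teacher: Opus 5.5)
Your proof is correct. For the first inequality it is essentially the paper's argument: both reduce to the quadratic part $\frac{(x-1)(5x-9)}2s^2$. Your exact identity, obtained from $\binom n3-L_i=\binom{N_i}3$ and $N_i-p_i+1=r+4-2i$, is cleaner than full expansion. It also makes the reduction to $i=2$ exact rather than asymptotic, since the $i=1$ expression exceeds the $i=2$ one by precisely $r^2+2r-5>0$.

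For the second inequality you take a genuinely different route. The paper expands each expression as an explicit cubic in $\ell$. It shows that the $\ell$-derivative has negative leading term $(-60x^2-72x+48)s^2/6$ for $x>1/2$, so the minimum over $s/2<\ell\le t(s)$ is at $\ell=t(s)$. It then evaluates there using $10t(s)^2+(18s-17)t(s)-24s^2+6s+6=0$.

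You instead peel off $\Lambda-A_3=\frac{(\ell-1)Q(\ell)}3$, which is already factored in \cref{lem:m3-endpoint-comparison}, and split into two regimes. For $\alpha\le\alpha_*-\eta$ this term is $\Omega_\eta(s^3)$ and swamps the $O(s^2)$ remainder. Near $\alpha_*$ you discard it and use positivity of the $s^2$-coefficient $\frac12\bigl[9(12-3i)\alpha^2-i(4-\alpha)^2-(7-2\alpha)\bigr]$ together with continuity.

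What each route buys:
\begin{itemize}
\item Your version avoids the cubic expansion and the derivative check. It also shows where positivity comes from: the cubic surplus $|P(3,s,\ell)|-|P'(s,\ell)|$ away from the threshold, and only the quadratic terms at it.
\item The paper's version gives explicit constants and needs no auxiliary $\eta$.
\end{itemize}

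The two agree at the endpoint. Using $10\alpha_*^2=24-18\alpha_*$, your coefficients at $\alpha_*$ are exactly the paper's leading terms: $\frac{724}5-\frac{67\sqrt{321}}{10}$ for $i=1$ and $\frac{1923}{25}-\frac{189\sqrt{321}}{50}$ for $i=2$.
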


The proof of \cref{lem:m3-case2-numerical} contains some tricky computations, and hence it is deferred in \cref{app:m3-numerics}.
The next claim handles the situation where a chosen low-layer set cannot be
completed by disjoint triples.  Here \(H(\overline E)\) again denotes the triples of \(H\) disjoint from the fixed set \(E\).
In that case the obstruction implies $H$ does not contain too many members.

\begin{claim}\label[claim]{cl:m3-subcaseb-no-p-matching}
    If \(H(\overline E)\) contains no \(p\) pairwise disjoint triples, then
    \begin{equation}\label{eq:m3-A3-delta-large}
        \delta>L_{12}-g_0.
    \end{equation}
\end{claim}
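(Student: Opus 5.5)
The plan is to bound $|H|$ from above using the absence of a $p$-matching in $H(\overline E)$. The resulting upper bound will be of the form $A_3-\delta<\Lambda-L_{12}$, and since $\delta=A_3-|H|$ and $g_0=\Lambda-A_3$, this is exactly \eqref{eq:m3-A3-delta-large}. So the target is
\[
|H|<\Lambda-L_{12}.
\]

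First split $H$ into the triples meeting $E$ and the triples disjoint from $E$. The number of triples meeting the $j$-set $E$ is at most $L_j=\binom n3-\binom{n-j}3$, so
\[
|H|\le L_j+|H(\overline E)|.
\]
The family $H(\overline E)$ lives on the $N=n-j$ vertices outside $E$ and has $\nu(H(\overline E))\le p-1$.

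Next bound $|H(\overline E)|$ by splitting according to whether $\tau(H(\overline E))\le p-1$ or $\tau(H(\overline E))>p-1$.

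\textbf{Small cover.} Suppose $\tau(H(\overline E))\le p-1$. Then trivially
\[
|H(\overline E)|\le \binom N3-\binom{N-p+1}3.
\]
This is the same as $h_3(N,p-1)$ except for the correction term $\binom{N-p-2}2$, which is of order $s^2$. So I would treat this case separately rather than through $h_3$.

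\textbf{Large cover.} Suppose $\tau(H(\overline E))>p-1\ge\nu(H(\overline E))$. I would apply \cref{thm:m3-GLM} with $u=\nu(H(\overline E))\le p-1$, and then use monotonicity in $u$ to pass to $u=p-1$. This gives
\[
|H(\overline E)|\le\max\left\{h_3(N,p-1),\binom{3p-1}3\right\}.
\]
The hypotheses need checking. We have $N\ge 3(p-1)+2$ because $n-j-3(\ell+j-5)-2=4s-4\ell+13-4j>0$ as $\ell<s$. We also need $N\ge N_0$, which holds for large $s$. Monotonicity of $h_3(N,u)$ in $u$ should be routine in this range.

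Combining this case with $|H|\le L_j+|H(\overline E)|$, \cref{lem:m3-case2-numerical} with $i=j$ gives
\[
|H|\le L_j+\max\left\{h_3(N_j,p_j-1),\binom{3p_j-1}3\right\}<\Lambda-L_{12},
\]
which is what we want.

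The small-cover case is the main obstacle, because $\binom N3-\binom{N-p+1}3$ exceeds $h_3$ by about $s^2$. Here I would first try to avoid stability altogether. We have $|H(\overline E)|\le \binom N3-\binom{N-p+1}3$ by direct counting, and by \cref{lem:m3-good-emc-dominance} this is at least $\binom{3p-1}3$. The quantity
\[
L_j+\binom N3-\binom{N-p+1}3=\binom n3-\binom{n-\ell+4-2j}3
\]
should then be compared with $\Lambda-L_{12}=\binom a2+\binom n3-\binom r3-L_{12}$.

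Since $r=n-\ell+1$, the relevant difference is
\[
\binom{r+3-2j}3-\binom r3,
\]
which is of order $(3-2j)r^2/2$. For $j=1$ this is of order $r^2$, roughly $\tfrac{(4s-2\ell)^2}{2}\cdot 2$, and it would have to be compared with $L_{12}-\binom a2\approx s(2n-s)/2-\ell^2/2$. This is uncertain, and for $j=2$ the sign is unfavorable.

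So the fallback is to pick $E$ more cleverly. If $\tau(H(\overline E))\le p-1$, take a cover $C$ of $H(\overline E)$ of size at most $p-1$. Then $E\cup C$ covers $H$ and has size at most $j+p-1=\ell-4+2j-1\le a$ when $j=1$. That would put $H$ in Case 1, contradicting the assumption $\tau(H)>a$ of Case 2.

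For $j=2$ we get $|E\cup C|\le a+1$, so the small-cover case needs an extra argument. One option is to use a vertex of $E$ not needed in the cover, and reapply the argument with the singleton. Another is to note that $\F_1\neq\emptyset$ or else to work with the trace structure to recover the missing $\binom{N-p-2}2$ term. I expect the paper handles this via the cover argument reducing to Case 1 (with $p$ defined so that $j+p-1\le a$), and I would verify this bookkeeping carefully.
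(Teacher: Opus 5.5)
Your structure is the paper's: rule out $\tau(H(\overline E))\le p-1$ by a covering argument, then apply \cref{thm:m3-GLM} and finish with \cref{lem:m3-case2-numerical} for $i=j$. As written, however, you leave the small-cover case open for $j=2$, and the cause is an arithmetic slip. With $p=\ell+j-4$ you have $j+(p-1)=\ell+2j-5$. This equals $\ell-3$ for $j=1$ and $\ell-1=a$ for $j=2$, not $a+1$. So for both $j\in\{1,2\}$, $E\cup C$ is a vertex cover of $H$ of size at most $a$: every triple of $H$ either meets $E$, or lies in $H(\overline E)$ and so meets $C$. This contradicts the Case 2 hypothesis $\tau(H)>a$. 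Hence $\nu(H(\overline E))\le p-1<\tau(H(\overline E))$ always holds, and the small-cover case is vacuous. Your ``main obstacle'' disappears, and you should delete the direct-count attempt and the speculative fallbacks (dropping a vertex of $E$, using $\F_1\ne\emptyset$, trace arguments). Incidentally, the direct count also has a small error: $N-p+1=r+4-2j$, not $r+3-2j$.

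You also do not need to apply \cref{thm:m3-GLM} with $u=\nu(H(\overline E))$ and then invoke monotonicity of $h_3(N,u)$ in $u$. The theorem's hypothesis is $\nu(G)\le u<\tau(G)$, so you may take $u=p-1$ directly. Here $u\ge1$ since $\ell\ge 6$, and your checks $N\ge 3(p-1)+2$ and $N\ge N_0$ are the right ones. With the arithmetic corrected, your argument is exactly the paper's proof.
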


\begin{proof}
    Assume that \(\nu(H(\overline E))\le p-1\).  We first show that
    \(\tau(H(\overline E))>p-1\).  Indeed, if
    \(\tau(H(\overline E))\le p-1\), then a vertex cover of
    \(H(\overline E)\), together with the vertices of \(E\), covers every
    triple of \(H\).  The size of this cover is at most
    \((p-1)+j = \ell+2j-5 \le \ell-1 = a,\)
    contradicting \(\tau(H)>a\).  Thus
    \(\nu(H(\overline E))\le p-1<\tau(H(\overline E)).\)
    Also, by the choice of the absolute threshold, \(N\ge N_0\), where
    \(N_0\) is the threshold in Theorem~\ref{thm:m3-GLM}.  Moreover,
    \(N-(3(p-1)+2) = N-(3p-1) = 4(s-\ell)+13-4j>0\)
    for all sufficiently large \(s\).  Hence Theorem~\ref{thm:m3-GLM} gives
    \[
        |H(\overline E)|
        \le
        \max\left\{
        h_3(N,p-1),\binom{3p-1}{3}
        \right\}.
    \]
    Since at most \(L_j\) triples meet \(E\), $|H|\le L_j+\max\left\{h_3(N,p-1),\binom{3p-1}{3}\right\}$.
    Equivalently,
    \[
        \delta
        \ge
        A_3-L_j-
        \max\left\{
        h_3(N,p-1),\binom{3p-1}{3}
        \right\}.
    \]
    Since \(g_0=\Lambda-A_3\), \cref{lem:m3-case2-numerical} with
    \(i=j\) gives
    \(A_3-L_j-h_3(N,p-1)>L_{12}-g_0\)
    and
    \(A_3-L_j-\binom{3p-1}{3}>L_{12}-g_0.\)
    Hence \eqref{eq:m3-A3-delta-large} follows.
\end{proof}

If \(H(\overline E)\) contains no \(p\) pairwise disjoint triples, then
\cref{cl:m3-subcaseb-no-p-matching} gives \(\delta>L_{12}-g_0\).  Hence
\[
    \Lambda-|H|
    =
    g_0+\delta
    >
    L_{12}
    \ge
    |\F_1|+|\F_2|.
\]
Thus
\(|\F_1|+|\F_2|+|H|<\Lambda,\)
and \eqref{eq:m3-key-low} holds strictly.

Otherwise, \(H(\overline E)\) contains \(p\) pairwise disjoint triples.
Together with \(E\), these triples form a pairwise disjoint family
\(\mathcal Q\subseteq\F_1\cup\F_2\cup H\) with
\(\Delta(\mathcal Q)=p+(4-j)=\ell.\)
By \cref{lem:deficit-completion} and \eqref{eq:m3-lambda-range},
\(|\Y_4|\ge c_0s^3\) for some constant \(c_0>0\).  Since
\(|\F_1|+|\F_2|\le L_{12}=O(s^2)\), we have
\(|\F_1|+|\F_2|<|\Y_4|\)
for all sufficiently large \(s\).  Since \(|H|\le A_3\le\Lambda\), we obtain $|\F_1|+|\F_2|+|H| < \Lambda+|\Y_4| \le \Lambda+|\Y_{\ge4}|$.
Thus \eqref{eq:m3-key-low} holds strictly in Subcase b.
This completes Case 2.

\subsection{Proof of Case 3}

Write
\(q\coloneqq \nu(H)=\ell+d\)
for some \(d\ge0\).  Since \(\nu(\F)<s\), we have \(q\le s-1\).

We first bound the possible excess of the first three layers over
\(\Lambda\).  Unlike in the general proof, Frankl's exact three-uniform
bound has two possible extremal terms, so both terms must be compared with
the canonical third layer.  Since \(q+1\le s\) and \(s\le(n+1)/3\),
Theorem~\ref{thm:m3-EMC3} gives
\[
    |H|
    \le
    \max\left\{
    \binom n3-\binom{n-q}{3},
    \binom{3q+2}{3}
    \right\}.
\]
For the first term,
\(n-q=4s-\ell-(\ell+d)=r-1-d,\)
and hence
\[
    \binom n3-\binom{n-q}{3}
    -
    \left(\binom n3-\binom r3\right)
    =
    \binom r3-\binom{r-1-d}{3}
    \le
    (d+1)\binom r2
    =
    O((d+1)s^2).
\]
For the second term,
\(\binom{3q+2}{3}-\binom{3\ell+2}{3}=O(ds^2),\)
and there is a constant \(C_0\) such that
\(\binom{3\ell+2}{3} - \left(\binom n3-\binom r3\right) \le C_0s^2.\)
Indeed, \(\binom{3\ell+2}{3}-A_3=O(s^2)\), and
\(A_3-\left(\binom n3-\binom r3\right) \le \binom a2=O(s^2)\)
because \(A_3\le\Lambda=\binom a2+\binom n3-\binom r3\).  Since
\(|\F_1|+|\F_2|\le n+\binom n2=O(s^2),\)
there is an absolute constant \(C\) such that
\begin{equation}\label{eq:m3-overflow-excess}
    |\F_1|+|\F_2|+|H|
    \le
    \Lambda+C(d+1)s^2.
\end{equation}

Choose \(q\) pairwise disjoint triples \(Q_1,\ldots,Q_q\in H\).
Put $t_0\coloneqq s-q$ and $W\coloneqq[n]\setminus\bigcup_{i=1}^q Q_i$.
Then \(|W|=n-3q=4t_0+d\).  If \(\F_4[W]\) contained \(t_0\) pairwise
disjoint 4-sets, then those 4-sets together with
\(Q_1,\ldots,Q_q\) would form an \(s\)-matching in \(\F\).  Hence
\(\nu(\F_4[W])<t_0\).  By \cref{lem:blocker}, with
\[
    T_1\coloneqq \frac1{t_0}\binom{4t_0+d}{4},
    \qquad
    T_2\coloneqq \binom{d+4}{4},
\]
we have
\begin{equation}\label{eq:m3-blocker-terms}
    |\Y_4|
    \ge
    \max\{T_1,T_2\}.
\end{equation}

Let
\(b\coloneqq s-\ell.\)
Then \(t_0+d=b\).  Since \(\ell\le t(s)\) and \(t(s)/s\to\alpha_*<1\),
there is a constant \(\beta>0\) such that
\(b\ge \beta s.\)
It remains to show that \(|\Y_4|\) is larger than the error term in
\eqref{eq:m3-overflow-excess}.  The first blocker term is used when
\(d\) is small, and the second blocker term is used when
\(d\) is large.  Choose a sufficiently large constant \(K\).

\medskip
\noindent\textbf{Subcase a: \(d\le Kb^{2/3}\).}
\medskip

Then \(t_0=b-d\ge b/2\) for all sufficiently large \(s\).  Hence
\(T_1 = \frac1{t_0}\binom{4t_0+d}{4} = \Omega(b^3) = \Omega(s^3).\)
On the other hand,
\(C(d+1)s^2=O(b^{8/3}).\)
Thus
\(|\Y_4|>C(d+1)s^2\)
for all sufficiently large \(s\).

\medskip
\noindent\textbf{Subcase b: \(d>Kb^{2/3}\).}
\medskip

The second term in \eqref{eq:m3-blocker-terms} gives
\(T_2 = \binom{d+4}{4} = \Omega(d^4).\)
Since \(s=O(b)\),
\(C(d+1)s^2=O(db^2).\)
Choosing \(K\) large enough gives
\(|\Y_4|>C(d+1)s^2.\)

In both subcases, \(|\Y_4|>C(d+1)s^2\).  Combining this with
\eqref{eq:m3-overflow-excess}, we obtain
\[
    |\F_1|+|\F_2|+|H|
    <
    \Lambda+|\Y_4|
    \le
    \Lambda+|\Y_{\ge4}|.
\]
Hence \eqref{eq:m3-key-low} holds strictly in Case 3.

\subsection{Completion of the proof and equality cases}

The three cases above prove \eqref{eq:m3-key-low} for every family with
\(\emptyset\notin\F\) satisfying \eqref{eq:m3-threshold-range}.  Hence
\(|\F|\le |P(3,s,\ell)|.\)
The canonical family \(P(3,s,\ell;A)\) has no \(s\) pairwise disjoint members
and has size \(|P(3,s,\ell)|\).  If \(t(s)\) is an integer and \(\ell=t(s)\), then
\[
    |P(3,s,\ell;A)|
    =
    \Lambda+\binom n{\ge4}
    =
    A_3+\binom n{\ge4}
    =
    |P'(s,\ell;A')|
\]
for every \(A'\in\binom{[n]}{3\ell-1}\).  Moreover,
\(P'(s,\ell;A')\) has no \(s\) pairwise disjoint members.  Indeed, if an
\(s\)-matching in \(P'(s,\ell;A')\) contains \(q\) triples, then its total
number of vertices is at least
\(3q+4(s-q)=4s-q.\)
Since \(n=4s-\ell\), this forces \(q\ge\ell\).  But all triples of
\(P'(s,\ell;A')\) lie inside \(A'\), and \(|A'|=3\ell-1\), so \(A'\) cannot
contain \(\ell\) pairwise disjoint triples.

It remains to classify equality.  The strict inequalities above show that
every subcase is strict except for two possibilities: the subcase with no good sets in
the coverable case, and the branch of Case 2 using the complete $3$-graph term with no lower-layer
sets.  Suppose first that \(\emptyset\notin\F\) and
\(|\F|=|P(3,s,\ell)|\).  Case 3 and the good-set subcase of Case 1 are
strict.  In Case 2, Subcase a is strict, and Subcase b is strict whenever
\(\F_1\cup\F_2\ne\emptyset\).  Hence equality can occur only in the
following two situations.

First, equality may occur in the subcase of Case 1 with no good sets.  Then equality in
\eqref{eq:m3-key-low} forces
\[
    |\Y_{\ge4}|=0
    \qquad\text{and}\qquad
    |\F_1|+|\F_2|+|H|=\Lambda.
\]
Therefore equality holds in \cref{cl:m3-terminal-core}.  Hence, we obtain $\F_1=\emptyset$, $\F_2=\binom A2$ and
\[
    H=
    \left\{T\in\binom{[n]}3:T\cap A\ne\emptyset\right\}
\]
for some \(A\subseteq[n]\) with \(|A|=a=\ell-1\) and $\F_{\ge4}=\binom{[n]}{\ge4}$.
Thus
\(\F=P(3,s,\ell;A).\)

Second, equality may occur in Case 2, Subcase b, with
\(\F_1\cup\F_2=\emptyset\).  In this case equality forces
\(\ell=t(s)\), \(|H|=A_3=\binom{3\ell-1}{3}\), and \(|\Y_{\ge4}|=0\).
Since Case 2 gives \(\nu(H)\le a=\ell-1\), we have \(\nu(H)<\ell\).  At the
endpoint,
\(A_3- \left(\binom n3-\binom r3\right) = \binom a2>0,\)
because \(\Lambda=A_3\) and
\(\Lambda=\binom a2+\binom n3-\binom r3.\)
Moreover,
\(n-(3\ell-1)=4(s-\ell)+1>0,\)
so Theorem~\ref{thm:m3-EMC3} applies with matching parameter \(\ell\).
By the equality case in Theorem~\ref{thm:m3-EMC3}, it follows that
\(H=\binom{A'}3\)
for some \(A'\in\binom{[n]}{3\ell-1}\).  Therefore
\(\F=\binom{A'}3\cup\binom{[n]}{\ge4} = P'(s,\ell;A').\)

Thus, if \(\ell<t(s)\), equality is possible only for the canonical family
\(P(3,s,\ell;A)\).  If \(t(s)\) is an integer and \(\ell=t(s)\), equality is possible only for
\(P(3,s,\ell;A)\) or \(P'(s,\ell;A')\).
Finally, if an extremal family originally contained \(\emptyset\), replacing
\(\emptyset\) by a missing singleton gives another extremal family with no
empty set but with a singleton.  This is impossible, since neither extremal
family described above contains a singleton.  Hence no extremal family
contains \(\emptyset\), completing the proof of \cref{thm:m3-lower-range}.

\appendix

\section{Numerical estimates in the proof of Theorem~\ref{thm:m3-lower-range}}
\label{app:m3-numerics}

\begin{lemma}\label[lemma]{lem:m3-good-emc-dominance}
    Assume \eqref{eq:m3-threshold-range}.  Let \(j\in\{1,2\}\), put
    \[
        p=\ell+j-4,
        \qquad
        N=n-j=4s-p-4.
    \]
    Then, for all sufficiently large \(s\),
    \(N\ge3p-1\)
    and
    \[
        \binom N3-\binom{N-p+1}3>\binom{3p-1}3.
    \]
\end{lemma}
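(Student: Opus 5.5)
We need to show $N\ge 3p-1$ and $\binom N3-\binom{N-p+1}3>\binom{3p-1}3$ under $s/2<\ell\le t(s)$, with $j\in\{1,2\}$, $p=\ell+j-4$ and $N=4s-p-4$. The plan is an asymptotic comparison in the ratio $x\coloneqq p/s$. The lower-order corrections coming from $j$ and the additive constants are $O(s^2)$, while the main terms are of order $s^3$, so it suffices to show that the leading coefficients differ by a uniform positive amount.

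\emph{First inequality.} We have $N-(3p-1)=4s-4p-3=4(s-\ell)+13-4j$. Since $\ell\le t(s)=\alpha_* s+O(1)$ with $\alpha_*<1$, this is at least $4(1-\alpha_*)s-O(1)>0$ for large $s$.

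\emph{Second inequality, leading order.} Write $p=xs$. The range of $\ell$ gives $x\in[1/2-o(1),\alpha_*+o(1)]$, and $N=(4-x)s+O(1)$. Then
\[
\binom N3-\binom{N-p+1}3=\frac{s^3}{6}\Bigl((4-x)^3-(4-2x)^3\Bigr)+O(s^2),
\qquad
\binom{3p-1}3=\frac{s^3}{6}\,27x^3+O(s^2),
\]
with the $O(s^2)$ constants uniform over the range. So it suffices to show
\[
f(x)\coloneqq (4-x)^3-(4-2x)^3-27x^3>0 \quad\text{on } [1/2,\alpha_*].
\]
Expanding gives $f(x)=48x-36x^2-20x^3=4x(12-9x-5x^2)$. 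The quadratic $12-9x-5x^2$ has positive root $(-9+\sqrt{321})/10=\alpha_*$. Hence $f>0$ on $(0,\alpha_*)$, but $f(\alpha_*)=0$.

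\emph{The main obstacle is the endpoint.} Near $\ell\approx t(s)$ the leading terms cancel, so the $O(s^2)$ corrections decide the sign. We therefore need the exact next-order terms, not just an asymptotic bound. I would handle this as follows.

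1. Compute the difference exactly as a polynomial in $s$ and $\ell$ for each $j\in\{1,2\}$. Substituting $p=\ell+j-4$ and $N=4s-\ell-j$ makes
\[
g_j(s,\ell)\coloneqq \binom N3-\binom{N-p+1}3-\binom{3p-1}3
\]
an explicit cubic.

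2. Compare $g_j$ with $\Lambda-A_3$, which is given in \cref{lem:m3-endpoint-comparison} by
\[
\frac{(\ell-1)\bigl(-10\ell^2-18s\ell+17\ell+24s^2-6s-6\bigr)}{3}.
\]
This expression is nonnegative for $\ell\le t(s)$, and its cubic part $(\ell/3)(24s^2-18s\ell-10\ell^2)$ matches $s^3f(x)/6$. This is consistent: the $s^3$ term of $g_j$ is $(s^3/6)f(x)$, with $\Lambda-A_3$ written in the variable $\ell$.

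3. Write $g_j=(\Lambda-A_3)+e_j(s,\ell)$, where $e_j$ is a quadratic polynomial. The substitution $\ell\mapsto p$ shifts $\ell$ by $4-j\in\{2,3\}$. Because $f'(\alpha_*)<0$, this shift decreases the cubic by roughly $(4-j)\cdot\bigl|\partial_\ell(\text{cubic})\bigr|$, which is of order $s^2$. Consequently $e_j$ should equal $c_j s^2+O(s)$ with $c_j>0$.

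4. Confirm the sign of $c_j$ by direct expansion. I would evaluate $e_j$ at $\ell=\alpha_* s$ to leading order and check that the result is positive.

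5. Since $\Lambda-A_3\ge0$ throughout the range, positivity of $e_j$ gives $g_j>0$ for large $s$. Away from the endpoint, the leading-order bound $f(x)\ge c>0$ already suffices.

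Step 4 is where I expect the real work: a sign slip there would break the argument. If the sign of $e_j$ turned out unfavourable, the fallback would be to note that $p\le\ell-2$ places $x=p/s$ strictly below $\alpha_*$ by $\Omega(1/s)$. One would then use $f'(\alpha_*)<0$ to gain a term of order $s^3\cdot(1/s)=s^2$ with an explicit constant, and check that this beats the residual $O(s^2)$ terms; this again requires explicit computation.
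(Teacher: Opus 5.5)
Your route is sound but organized differently from the paper's. The paper factors the difference exactly as $\binom N3-\binom{N-p+1}3-\binom{3p-1}3=\frac{p-1}{6}R(p,s)$ with $R(p,s)=48s^2-36sp-20p^2+67p-108s+54$. Since $\partial R/\partial p<0$, only the endpoint $p=t(s)-2$ needs checking. There the defining relation of $t(s)$ collapses $R$ to $113t(s)-24s-148=(113\alpha_*-24)s+O(1)>0$. That argument is uniform over the whole range and never uses \cref{lem:m3-endpoint-comparison}.

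Your version absorbs the vanishing cubic into $\Lambda-A_3\ge 0$ and reduces to a quadratic remainder $e_j$. This explains nicely why the critical ratio is the same $\alpha_*$. But it genuinely needs your near/far split, because $e_j$ is negative for $\ell/s$ near $1/2$: the quadratic part of $6e_2$ at $\ell=s/2$ is $(153/4+24-96)s^2<0$.

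One caution on step 3: the shift is not the only order-$s^2$ contribution to $e_j$, so it cannot by itself justify $c_j>0$. With $c=4-j$, the shift contributes $c(60\ell^2+72s\ell-48s^2)$ to $6e_j$. Note that replacing $\ell$ by the smaller $p$ \emph{increases} the cubic, which is what helps. However, the quadratic parts of $6g_j$ and $6(\Lambda-A_3)$, namely $87p^2-72sp-48s^2$ and $54\ell^2+24s\ell-48s^2$, differ by $33\ell^2-96s\ell$. This is about $-59s^2$ at $\ell=\alpha_* s$, partly cancelling the shift gain.

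So the direct expansion in step 4 is essential. Carrying it out gives
\[
6e_2=153\ell^2+48s\ell-96s^2-555\ell+96s+468,\qquad 6e_1=213\ell^2+120s\ell-144s^2-1029\ell-12s+1296 .
\]
Using $5\alpha_*^2=12-9\alpha_*$, the quadratic parts at $\ell=\alpha_* s$ are $\frac{1356-1137\alpha_*}{5}s^2\approx 68.4s^2$ and $\frac{1836-1317\alpha_*}{5}s^2\approx 132.3s^2$. So your plan does close: use $e_j>0$ together with $\Lambda-A_3\ge 0$ for $\ell/s\ge 0.7$, and use $f(p/s)\ge 9$ for $\ell/s<0.7$. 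As a consistency check, at $\ell=t(s)$ one has $\Lambda=A_3$, so $e_2$ there equals the paper's $\frac{t(s)-3}{6}\bigl(113t(s)-24s-148\bigr)$.
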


\begin{proof}
    We have
    \(N-3p+1=4(s-\ell)+13-4j>0\)
    for all sufficiently large \(s\), since \(\ell\le t(s)<s\).
    Hence \(N\ge3p-1\).

    Put
    \[
        R(p,s)
        \coloneqq
        48s^2-36sp-20p^2+67p-108s+54.
    \]
    Direct calculation gives
    \[
        \binom N3-\binom{N-p+1}3-\binom{3p-1}3
        =
        \frac{p-1}{6}R(p,s).
    \]
    Moreover,
    \(\frac{\partial R}{\partial p}=-36s-40p+67<0\)
    on the range \(s/2-3<p=\ell+j-4\le t(s)-2\).  Hence
    \(R(p,s)\ge R(t(s)-2,s).\)
    Using
    \(10t(s)^2+(18s-17)t(s)-24s^2+6s+6=0,\)
    we get
    \[
        \begin{aligned}
            R(t(s)-2,s)
             & =
            48s^2-36s(t(s)-2)-20(t(s)-2)^2+67(t(s)-2)-108s+54 \\
             & =113t(s)-24s-148                               \\
             & =(113\alpha_*-24)s+O(1)>0.
        \end{aligned}
    \]
    Since \(p-1>0\) for all sufficiently large \(s\), the desired inequality
    follows.
\end{proof}

\begin{lemma}\label[lemma]{lem:m3-h3-gap}
    Under \eqref{eq:m3-threshold-range}, for all sufficiently large \(s\),
    \(\Lambda-h_3(n,a)>L_{12},\)
    where \(L_{12}\) is defined in \eqref{eq:m3-F12-L12}.
\end{lemma}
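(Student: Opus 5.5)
The plan is to compute $\Lambda-h_3(n,a)$ exactly and reduce the claim to an explicit quadratic inequality in $\ell$ and $s$. That inequality holds with room to spare, because $\ell/s$ is bounded away from $1$.

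\emph{Step 1: exact simplification.} With $r=n-a$ we have $\Lambda=\binom a2+\binom n3-\binom r3$ and $h_3(n,a)=\binom n3-\binom r3+1-\binom{r-3}{2}$. The terms $\binom n3-\binom r3$ cancel, giving
\[
    \Lambda-h_3(n,a)=\binom a2+\binom{r-3}{2}-1 .
\]
No matching theory enters here; this is pure algebra.

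\emph{Step 2: the target inequality.} We have $L_{12}=(s-1)\bigl(1+\tfrac{2n-s}{2}\bigr)$. Substitute $a=\ell-1$, $r=4s-2\ell+1$ and $n=4s-\ell$. The claim becomes
\[
    \binom{\ell-1}{2}+\binom{4s-2\ell-2}{2}-1>(s-1)\cdot\frac{7s-2\ell+2}{2}.
\]
Multiplying by $2$ turns this into $P(\ell,s)>0$ for an explicit quadratic polynomial $P$. I will expand $P$ fully. Its top-degree part is
\[
    \ell^2+(4s-2\ell)^2-s(7s-2\ell)=5\ell^2-14s\ell+9s^2=(s-\ell)(9s-5\ell),
\]
and the remaining terms are linear in $\ell$ and $s$ with absolute constant coefficients.

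\emph{Step 3: positivity in the range.} By \eqref{eq:m3-lambda-range} we have $\ell\le\lambda s$ with a fixed $\lambda<1$. Hence $(s-\ell)(9s-5\ell)\ge(1-\lambda)(9-5\lambda)s^2$, which is at least a fixed positive constant times $s^2$. At $\alpha_*\approx0.89$ the leading coefficient is about $0.49$, which is comfortably positive. The linear terms are $O(s)$, so $P(\ell,s)>0$ for all sufficiently large $s$.

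The only place needing care is Step 2. I need to carry the exact lower-order terms correctly, in particular the $-1$ from $h_3$ and the constants in $\binom{r-3}{2}$. This ensures the $O(s)$ error is genuinely linear and does not secretly cancel the quadratic main term. Since the main term has a uniform gap $(1-\lambda)(9-5\lambda)>0$, any bounded-coefficient linear error is absorbed once $s$ is large.
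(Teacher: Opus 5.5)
Your proposal is correct and matches the paper's proof: the paper also expands $2\bigl(\Lambda-h_3(n,a)-L_{12}\bigr)$ exactly, getting $5\ell^2-14\ell s+5\ell+9s^2-15s+8$, which is what your Step 2 produces. It then concludes from the factorization $(s-\ell)(9s-5\ell)$ of the quadratic part together with $\ell/s$ being bounded away from $1$, exactly as you do; your intermediate identity $\Lambda-h_3(n,a)=\binom a2+\binom{r-3}2-1$ is simply a tidy way to organize that direct calculation.
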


\begin{proof}
    Direct calculation gives
    \[
        2\bigl(\Lambda-h_3(n,a)-L_{12}\bigr)
        =
        5\ell^2-14\ell s+5\ell+9s^2-15s+8.
    \]
    The quadratic part is
    \[
        5\ell^2-14\ell s+9s^2
        =
        (s-\ell)(9s-5\ell).
    \]
    Since \(t(s)/s\to\alpha_*<1\), both \(s-\ell\) and \(9s-5\ell\)
    are bounded below by positive constant multiples of \(s\) throughout
    the range \(s/2<\ell\le t(s)\).  Hence the quadratic part is
    \(\Omega(s^2)\), and the lower-order terms are negligible.  Therefore
    \(\Lambda-h_3(n,a)>L_{12}\) for all sufficiently large \(s\).
\end{proof}

\begin{proof}[Proof of \cref{lem:m3-case2-numerical}]
    For the term \(h_3(N_i,p_i-1)\), direct expansion gives
    \[
        \Lambda-L_1-h_3(n-1,\ell-4)-L_{12}
        =
        \frac{
            13\ell^2-46\ell s-11\ell+41s^2+17s+4
        }2,
    \]
    and
    \[
        \Lambda-L_2-h_3(n-2,\ell-3)-L_{12}
        =
        \frac{
            5\ell^2-14\ell s+5\ell+9s^2-15s+8
        }2.
    \]
    Their quadratic parts are respectively
    \[
        \frac{13x^2-46x+41}{2}s^2
        \quad\text{and}\quad
        \frac{(x-1)(5x-9)}2s^2,
        \qquad x=\ell/s.
    \]
    Since \(t(s)/s\to\alpha_*<1\), the relevant range satisfies
    \(1/2<x\le\alpha_*+o(1)\).  On this range both quadratic coefficients
    are bounded away from zero.  Hence, for all sufficiently large \(s\),
    \[
        \Lambda-L_1-h_3(n-1,\ell-4)-L_{12}>0,
        \qquad
        \Lambda-L_2-h_3(n-2,\ell-3)-L_{12}>0.
    \]

    For the term \(\binom{3p_i-1}{3}\), direct expansion gives
    \[
        \Lambda-L_1-\binom{3\ell-10}{3}-L_{12}
        =
        \frac{
            -20\ell^3-36\ell^2s+294\ell^2+48\ell s^2
            +54\ell s-1114\ell-117s^2+63s+1326
        }6,
    \]
    and
    \[
        \Lambda-L_2-\binom{3\ell-7}{3}-L_{12}
        =
        \frac{
            -20\ell^3-36\ell^2s+210\ell^2+48\ell s^2
            +78\ell s-616\ell-165s^2+123s+492
        }6.
    \]
    As functions of \(\ell\), both have derivative with leading term
    \((-60x^2-72x+48)s^2\), where \(x=\ell/s\).  This leading coefficient is
    negative for \(x>1/2\).  Thus both expressions are decreasing in \(\ell\)
    throughout \(s/2\le\ell\le t(s)\).  It is enough to evaluate them at
    \(\ell=t(s)\).  Using the defining relation for \(t(s)\), the two
    expressions at \(\ell=t(s)\) have leading terms
    \[
        \left(\frac{724}{5}-\frac{67\sqrt{321}}{10}\right)s^2
        \quad\text{and}\quad
        \left(\frac{1923}{25}-\frac{189\sqrt{321}}{50}\right)s^2,
    \]
    respectively.  Both coefficients are positive.  Hence, for all
    sufficiently large \(s\),
    \[
        \Lambda-L_1-\binom{3\ell-10}{3}-L_{12}>0,
        \qquad
        \Lambda-L_2-\binom{3\ell-7}{3}-L_{12}>0.
    \]
\end{proof}


\begin{thebibliography}{99}

    \bibitem{Baranyai}
    Zs. Baranyai,
    On the factorization of the complete uniform hypergraph,
    in \emph{Infinite and Finite Sets, Vol.~I},
    Colloq. Math. Soc. J\'anos Bolyai, Vol.~10,
    North-Holland, Amsterdam, 1975, pp.~91--108.

    \bibitem{ErdosEMC}
    P. Erd\H{o}s,
    A problem on independent $r$-tuples,
    \emph{Ann. Univ. Sci. Budapest. E\"otv\"os Sect. Math.} {\bf 8} (1965), 93--95.

    \bibitem{ErdosGallai}
    P. Erd\H{o}s and T. Gallai,
    On maximal paths and circuits of graphs,
    \emph{Acta Math. Acad. Sci. Hungar.} {\bf 10} (1959), 337--356.

    \bibitem{ErdosKoRado}
    P. Erd\H{o}s, C. Ko and R. Rado,
    Intersection theorems for systems of finite sets,
    \emph{Quart. J. Math. Oxford Ser. (2)} {\bf 12} (1961), 313--320.

    \bibitem{FranklDAM}
    P. Frankl,
    On the maximum number of edges in a hypergraph with given matching number,
    \emph{Discrete Appl. Math.} {\bf 216} (2017), 562--581.

    \bibitem{FKnonuniform}
    P. Frankl and A. Kupavskii,
    Families with no $s$ pairwise disjoint sets,
    \emph{J. Lond. Math. Soc. (2)} {\bf 95} (2017), no.~3, 875--894.

    \bibitem{FKstability}
    P. Frankl and A. Kupavskii,
    Two problems on matchings in set families -- In the footsteps of
    Erd\H{o}s and Kleitman,
    \emph{J. Combin. Theory Ser. B} {\bf 138} (2019), 286--313.

    \bibitem{FK2022}
    P. Frankl and A. Kupavskii,
    The Erd\H{o}s Matching Conjecture and concentration inequalities,
    \emph{J. Combin. Theory Ser. B} {\bf 157} (2022), 366--400.

    \bibitem{GuoLuMao}
    M. Guo, H. Lu and D. Mao,
    A stability result on matchings in $3$-uniform hypergraphs,
    \emph{SIAM J. Discrete Math.} {\bf 36} (2022), no.~3, 2339--2351.

    \bibitem{Kleitman}
    D. J. Kleitman,
    Maximal number of subsets of a finite set no $k$ of which are pairwise disjoint,
    \emph{J. Combin. Theory} {\bf 5} (1968), 157--163.

    \bibitem{KupavskiiSokolov2025}
    A. Kupavskii and G. Sokolov,
    A complete solution of the Erd\H{o}s--Kleitman matching problem for $n\le3s$,
    arXiv:2511.21628, 2025.

    \bibitem{KupavskiiSokolovOtherEnd}
    A. Kupavskii and G. Sokolov,
    Families without $s$-matchings: the other end,
    arXiv:2605.00996, 2026.

    \bibitem{KupavskiiSokolovMore}
    A. Kupavskii and G. Sokolov,
    More on the Erd\H{o}s--Kleitman problem on matchings in set families,
    arXiv:2605.04379, 2026.

    \bibitem{LuczakMieczkowska}
    T. \L{}uczak and K. Mieczkowska,
    On Erd\H{o}s' extremal problem on matchings in hypergraphs,
    \emph{J. Combin. Theory Ser. A} {\bf 124} (2014), 178--194.

\end{thebibliography}
\end{document}